\documentclass[12pt]{amsart}
\usepackage{color}
\usepackage[utf8]{inputenc}
\usepackage{mathrsfs}
\usepackage{amsfonts,amsmath}
\usepackage{amssymb,latexsym}
\usepackage{bbm}
\usepackage{mathtools}
\usepackage{etoolbox}
\patchcmd{\section}{\scshape}{\bfseries\Large}{}{}
\makeatletter
\renewcommand{\@secnumfont}{\bfseries\Large}
\def\l@subsection{\@tocline{2}{0pt}{4pc}{5pc}{}}
\makeatother

\usepackage[unicode=true,psdextra]{hyperref}
\hypersetup{
  colorlinks   = true,    
  urlcolor     = blue,    
  linkcolor    = blue,    
  citecolor    = red      
}
\usepackage[scr=boondox]{mathalpha}
\usepackage{enumitem}
\setlist[description]{leftmargin=.4cm,labelindent=0cm}
\newcommand{\curl}{\operatorname{curl}}
\newcommand{\tr}{\operatorname{tr}}
\newcommand{\dive}{\operatorname{div}}
\newcommand{\ep}{\varepsilon}

\newcommand\D{{\mathscr{D}}}
\newcommand{\uep}{u_\varepsilon}
\newcommand{\hep}{h_\varepsilon}
\newcommand{\hde}{h_\delta}

\newcommand{\jde}{J_\delta}
\newcommand{\zep}{z_\varepsilon}
\newcommand\nl[2]{\|#2\|_{L^{#1}}}
\newcommand{\om}{\omega}
\newcommand\tde{\theta_\delta}
\newcommand\omde{\omega_\delta}
\newcommand\dt{\,dt}
\newcommand\dx{\,dx}

\definecolor{bostonuniversityred}{rgb}{0.8, 0.0, 0.0}     

\newcommand{\dd}{\,{\rm d}}
\newcommand\ul{u_L}
\newcommand\hl{h_L}
\newcommand\oml{\omega_L}
\newcommand{\zl}{z_L}

\newcommand\R{{\mathbb{R}}}
\renewcommand\P{{\mathbb{P}}}
\newcommand\N{{\mathbb{N}}}

\newtheorem{theorem}{Theorem}[section]
\newtheorem{proposition}[theorem]{Proposition}
\newtheorem{lemma}[theorem]{Lemma}
\newtheorem{corollary}[theorem]{Corollary}
\theoremstyle{definition}
\newtheorem{definition}[theorem]{Definition}

\newtheorem{remark}[theorem]{Remark}
\numberwithin{equation}{section}

\usepackage{framed,color}
\definecolor{shadecolor}{rgb}{0.9,0.9,0.9}

\title[2D Micropolar Flows]{On the Role of the Viscosity Parameters in the Large Time Asymptotics of 2D Micropolar Flows}
\author{L. Brandolese, A. V. Busuioc, D. Iftimie and C. F. Perusato}

\begin{document}

\maketitle

\tableofcontents

\begin{abstract}
We investigate the role of the four viscosity parameters, in fluids where the particles possess a microstructure (micropolar flows) and are allowed to rotate in a two-dimensional setting.
We first establish the existence of global finite energy solutions, satisfying the classical energy equality, for arbitrary initial data in $L^2$, in the case of a spin viscosity $\gamma\ge0$, 
and we construct the asymptotic profiles of the solution as $t\to+\infty$. 
We deduce the remarkable fact that the large time behavior only depends on the kinematic viscosity $\mu$, and not on the other parameters $\chi$ (vortex-viscosity), $\gamma$ (spin viscosity) and $\kappa$ (gyroviscosity) of the model. 
Our primary tool is a new enstrophy-like identity of independent interest, involving the difference between the fluid vorticity and the micro-angular velocity. 
Another consequence of our analysis is the identification of scenarios where the presence of micro-rotational effects significantly enhances dissipation, thereby slowing down the fluid motion at large times. 
\end{abstract}

\section{Introduction}
Introduced by Eringen in the 1960s \cite{Eringen}, micropolar fluids constitute a notable generalization of the classical Navier-Stokes equations. They represent a significant class of non-Newtonian fluids, distinguished by particles that possess a rigid microscopic structure. This structure enables the particles to rotate about their center of mass (micro-rotation), with an angular velocity denoted by 
$W$. The conservation of linear and angular momentum, combined with certain natural assumptions regarding the stress tensor, leads to the following equations in $\R^3$:
    \begin{equation}
\label{MP}
\left\{
\begin{aligned}
&U_t+U\cdot \nabla U+\nabla P=(\mu+\chi)\Delta U+2\chi\nabla\times W, \qquad 
x\in\R^3, \;t>0\\
&W_t+(U\cdot\nabla)W=\gamma\Delta W+\kappa\nabla(\nabla\cdot W)+2\chi\nabla\times U-4\chi W,\\
&\nabla\cdot U=0,
\end{aligned}
\right.
\end{equation}
where the coefficients 
$ \mu $ (kinematic viscosity) 
and $ \chi $ (vortex viscosity)
are positive, 
and $ \kappa $ (gyroviscosity) and $ \gamma $ (spin viscosity)  are nonnega\-tive, 
all assumed to be constant (see G. Lukaszewicz \cite{Lukaszewicz}).

\par This paper will explore the two-dimensional case in which diffusion associated with the rotation of the particles may vanish, that is when $ \gamma \geq 0 $. The three-dimensional case is the subject of a forthcoming work. In the two-dimensional case, we set $U=(u_1,u_2,0)=(u,0)$ and $W=(0,0,h)$, where $u$ is a 2D vector field and $h$ a 2D scalar function. Then $\nabla\times U=(0,0,\curl u)$ where $\curl u=\partial_1u_2-\partial_2u_1$, and $\nabla\times W=-(\nabla^\perp h,0)$.  With these notations, the micropolar fluid equations in $\R^2$ read as follows:\begin{equation}\label{MP2D}
\left\{
\begin{aligned}
  \partial_t u+u\cdot\nabla u+\nabla p&=(\mu+\chi)\Delta u-2\chi\nabla^\perp h\\
\dive u&=0\\
  \partial_t h+u\cdot\nabla h&=\gamma\Delta h+2\chi\curl u-4\chi h.
\end{aligned}
\right.
\end{equation}

This system of equations models a wide range of phenomena observed in various complex fluids, such as suspensions, lubricants, and blood flow. Certain polymeric fluids and those with specific additives in thin films can be accurately described by this mathematical model (see \cite{Eringen, Lukaszewicz}).

The spin-inviscid case ($\gamma = 0$) corresponds to a regime where the diffusion of micro-angular velocity is negligible compared to other effects. Physically, this can be justified in systems where rotational inertia dominates over viscous dissipation at the microscale. In this case, the governing equation for micro-rotation becomes a hyperbolic transport equation, making the mathematical analysis more delicate, as we shall see. Moreover, a similar scenario is observed in ferrofluids. Experimental studies suggest that, in these systems, the effects associated with the diffusion of micro-angular velocity are extremely weak compared to other dissipative mechanisms \cite{Shliomis2021}, further supporting the idea that, in certain physical regimes, neglecting this diffusion term can be a reasonable approximation without significantly altering the qualitative behavior of the model. Let us also mention that the results developed in \cite{GRT} suggest that the analysis here in this paper could also be applied to other models in different contexts, such as the Ericksen–Leslie director theory for the dynamics of nematic liquid crystals developed in \cite{Ericksen1990}, for example.

\subsection*{Main Results}
	
	In this paragraph, we present our main results. In the next part, we will place our findings in perspective, comparing them with the existing literature. 

The existence and uniqueness theory of weak solutions of Leray's type
for the micropolar system~\eqref{MP2D} is close to that of the 2D Navier-Stokes equations,
 when $\mu,\chi,\gamma>0$, but  more delicate to extend in the spin-inviscid case $\gamma=0$.
In Section~\ref{sec:existence} we will establish the following:
\begin{itemize}
\item[(i)]
The existence of finite energy solutions, i.e., weak solutions belonging to Leray's space
arising from initial data $(u_0,h_0)\in L^2_\sigma(\R^2)\times L^2(\R^2)$. 
\item[(ii)]
The fact that any finite energy solution satisfies the classical energy equality, and belongs to $C^0(\R^+,L^2(\R^2))$. 
\end{itemize}

The above results are well known when $\gamma>0$ and classically proved following Leray's method. The case $\gamma=0$ is more complicated due to the absence of any smoothing effect on $h$. Indeed, even for the linear version of \eqref{MP2D} which will be studied in Section \ref{sect-linear}, the explicit formulas that we find in that section show that $h$ exhibits no smoothing effect as the time increases. Since we assume that $h_0$ is only square integrable, the solution $h$ is not better than square-integrable. This is not enough regularity to justify the multiplication of the equation of $h$ from \eqref{MP2D} by $h$. We are able to deal with this issue with an argument from the DiPerna-Lions theory.

	\medskip
In Section~\ref{sec:val-ide} we establish a couple of new remarkable enstrophy-like equalities, 
involving the interaction between the microrotation and the vorticity,
valid for any finite energy solution in the case $\gamma>0$ and for classical solutions in the case $\gamma\ge0$.
For example, for $\gamma=0$, we introduce the quantity 
\begin{equation*}
\psi(t):=\mu \|h(t)\|^2_{L^2}+\chi  \|h(t)-\textstyle\frac{\chi+\mu}{2\chi}\curl u(t)\|^2_{L^2}
\end{equation*} 
and our identity reads 
\begin{equation}
 \label{identity-intro}
\frac{\dd }{\dd t} \psi(t)
 +2\chi(\chi+\mu)\Big(\bigl\|(\curl u-2h)(t)\bigr\|^2_{L^2}
 + \bigl\|\nabla \bigl( h-\textstyle\frac{\chi+\mu}{2\chi}\curl u\bigr) (t)\bigr\|^2_{L^2}\Big)=0.
\end{equation}
In particular, we deduce the important information that, for classical solutions to~\eqref{MP2D},
one has $\frac{\dd}{\dd t}\psi(t)\le0$ in $\R^+$.
But the existence of classical solutions when $\gamma=0$, under the general assumptions $(u_0,h_0)\in L^2_\sigma(\R^2)\times L^2(\R^2)$ is out of reach, because of the lack of the regularizing effect in the equation for~$h$.
A crucial observation will be that, in the much larger class of finite energy solutions,
for which the identity~\eqref{identity-intro} is formal,
we can still prove that $\psi\in L^1(\R^+)$
is, almost everywhere in $\R^+$, equal to a monotonically decreasing function. The justification of this fact is not at all obvious. Indeed, the identity \eqref{identity-intro} involves the $H^1$ norm of $h$, so using it requires making $H^1$ estimates on $h$ while $h$ is square-integrable and not better! It is already difficult to make $L^2$ estimates on $h$, making $H^1$ estimates is pushing the difficulty one step further. Once we prove the monotonicity of the function $\psi$, we will be able to apply Zingano's  monotonicity method \cite{GuterresNichePerusatoZingano2023, ZinganoJMFM} and to deduce useful
information about the large time decay (see, e.g., Corollary~\ref{corgamma0} and also Section~\ref{sec-energy}).
Namely, in Section~\ref{sec-energy} we will deduce the following:
\begin{itemize}
\item[(iii)]
For $\gamma\ge0$,
finite energy solutions $(u,h)$ arising from 
$(u_0,h_0)\in L^2_\sigma(\R^2)\times L^2(\R^2)$   
satisfy, as $t\to+\infty$,
\[
\|u(t)\|_{L^2}^2\to0,\qquad
 t\|h(t)\|_{L^2}^2\to0,
\qquad\text{and}
\qquad
 t\|\nabla u(t)\|_{L^2}^2\to0.
\]
\end{itemize}
These limits were already known for $\gamma>0$. 
The two latter limits are especially delicate  to obtain rigorously in the case $\gamma=0$.
Even though the focus of the present paper is on the large time behavior,
we feel that the new enstrophy identities established in Section~\ref{sec:val-ide} are interesting for their own sake, 
and have potential applications to other problems in micropolar flows,
as they give a better insight on how microrotational effects affect the vorticity. 

\bigskip
In Section~\ref{sect-linear} we perform a careful analysis of the linear 2D micropolar system.
\begin{equation}\label{sysL-intro}
\left\{
\begin{aligned}
  \partial_t \ul&=(\mu+\chi)\Delta \ul-2\chi\nabla^\perp \hl \\
  \partial_t \hl &=\gamma\Delta \hl +2\chi\curl\ul-4\chi \hl 
\end{aligned}
\right.
\end{equation}
Our main result on the linear system
is the following theorem.
\begin{theorem}
\label{th:linear}
Let $\gamma\ge0$, $\mu,\chi>0$. Assume that $(u_0,h_0)\in L^2_\sigma(\R^2)\times L^2(\R^2)$.
 The solution of \eqref{sysL-intro} with initial data $(u_0,h_0)$ has the following asymptotic behavior in $L^2$:
\begin{equation}\label{ul11}
\big\|\ul-e^{\mu t\Delta}u_0+\frac12\nabla^\perp e^{\mu t\Delta}h_0\big\|_{L^2}\leq \frac Ct \|u_0\|_{L^2}+\frac C{t^{\frac32}} \|h_0\|_{L^2}
\qquad\forall t\geq1
\end{equation}
and 
\begin{equation*}
\big\|\hl -\frac12\curl e^{\mu t\Delta}u_0+\frac14\Delta e^{\mu t\Delta}h_0\big\|_{L^2}\leq\frac C{t^{\frac32}} \|u_0\|_{L^2}+\frac C{t^2} \|h_0\|_{L^2}
\qquad\forall t\geq1
\end{equation*}
for some constant $C$ depending only on the material coefficients.
\end{theorem}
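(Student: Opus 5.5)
Since the system \eqref{sysL-intro} is linear with constant coefficients, we work in Fourier variables and reduce it to a $2\times2$ ODE system for each frequency $\xi$. Set $\omega_L=\curl \ul$. Because $\dive\ul=0$, $\ul$ is recovered from $\omega_L$ by the Biot--Savart law, $\widehat{\ul}=\frac{i\xi^\perp}{|\xi|^2}\widehat{\omega_L}$. Taking the curl of the first equation, and using $\curl\nabla^\perp h=\Delta h$, gives the closed system
\[
\partial_t\widehat{\omega_L}=-(\mu+\chi)|\xi|^2\widehat{\omega_L}+2\chi|\xi|^2\widehat{\hl},\qquad
\partial_t\widehat{\hl}=2\chi\widehat{\omega_L}-(\gamma|\xi|^2+4\chi)\widehat{\hl}.
\]
Its matrix is
\[
M(\xi)=\begin{pmatrix}-(\mu+\chi)|\xi|^2 & 2\chi|\xi|^2\\ 2\chi & -(\gamma|\xi|^2+4\chi)\end{pmatrix}.
\]
We then study $e^{tM(\xi)}$ separately in the low-frequency region $|\xi|\le r_0$ and the high-frequency region $|\xi|\ge r_0$.

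\textbf{High frequencies.} For $|\xi|\ge r_0$ we show there is $c>0$ with $|e^{tM(\xi)}|\le Ce^{-ct}$ uniformly in $\xi$. This also has to hold when $\gamma=0$, where the dissipation of $h$ is only the bounded term $4\chi h$.
\begin{itemize}
\item To prove it, we use the energy functional of the linear system, $\|\ul\|^2+\|\hl\|^2$ in Fourier. Its dissipation is $(\mu+\chi)|\xi|^2|\hat u|^2+4\chi|\hat h|^2-4\chi\,\mathrm{Re}(\ldots)$. Completing the square shows it is bounded below by $c\min(1,|\xi|^2)(|\hat u|^2+|\hat h|^2)$.
\item Alternatively, we read the bound off the eigenvalues. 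We have $\det M=4\chi\mu|\xi|^2+\gamma(\mu+\chi)|\xi|^4>0$ and $\tr M<0$, so both eigenvalues have negative real part. For large $|\xi|$ with $\gamma=0$, the eigenvalues tend to $-(\mu+\chi)|\xi|^2$ and $-4\chi\mu/(\mu+\chi)$, so the real parts stay bounded away from $0$.
\end{itemize}
Exponential decay is much faster than the claimed rates, and the comparison terms $e^{\mu t\Delta}u_0$, $\nabla^\perp e^{\mu t\Delta}h_0$, $\curl e^{\mu t\Delta}u_0$, $\Delta e^{\mu t\Delta}h_0$ are also exponentially small on $|\xi|\ge r_0$. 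So this region contributes $Ce^{-ct}(\|u_0\|_{L^2}+\|h_0\|_{L^2})$.

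\textbf{Low frequencies.} For $|\xi|\le r_0$, set $\varepsilon=|\xi|^2$. The eigenvalues are analytic in $\varepsilon$:
\[
\lambda_1(\varepsilon)=-\mu\varepsilon+O(\varepsilon^2),\qquad \lambda_2=-4\chi+O(\varepsilon),
\]
where $\lambda_1=-\det M/4\chi+O(\varepsilon^2)$. The spectral projection onto the $\lambda_2$ mode contributes $O(e^{-2\chi t})$. The slow eigenvector is $(1,\tfrac12+O(\varepsilon))$, i.e. $h\approx\frac12\omega$, and the slow left eigenvector is $(1,-\tfrac\varepsilon2+O(\varepsilon^2))$. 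This gives
\[
e^{tM}\approx e^{\lambda_1 t}\begin{pmatrix}1\\ \tfrac12\end{pmatrix}\Bigl(\widehat{\omega_0}-\tfrac{|\xi|^2}{2}\widehat{h_0}\Bigr)+\ldots
\]
Translating back to $u$ via Biot--Savart:
\begin{itemize}
\item The leading part of $\hat u$ is $e^{\lambda_1t}\bigl(\hat u_0-\tfrac12\widehat{\nabla^\perp h_0}\bigr)$.
\item The leading part of $\hat h$ is $\tfrac12 e^{\lambda_1 t}\bigl(\widehat{\curl u_0}-\tfrac12|\xi|^2\hat h_0\bigr)$, which is the Fourier transform of $\frac12\curl e^{\mu t\Delta}u_0-\frac14\Delta e^{\mu t\Delta}h_0$ up to the replacement of $e^{\lambda_1t}$ by $e^{-\mu t|\xi|^2}$.
\end{itemize}

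\textbf{Error terms.} The errors in the low-frequency region are of three kinds:
\begin{enumerate}
\item $|e^{\lambda_1 t}-e^{-\mu t|\xi|^2}|\le Ct|\xi|^4e^{-ct|\xi|^2}$, using $\lambda_1=-\mu|\xi|^2+O(|\xi|^4)$ with real part $\le -c|\xi|^2$.
\item The $O(\varepsilon)$ corrections to the eigenvectors, which carry an extra factor $|\xi|^2$.
\item The fast mode, which is exponentially small.
\end{enumerate}
We bound each error in $L^2$ by Plancherel, using $\sup_\xi |\xi|^{k}e^{-ct|\xi|^2}\le Ct^{-k/2}$. Since only $L^2$ norms of the data are involved, this sup bound suffices.
\begin{itemize}
\item For $u$ with data $u_0$, the corrections are $O(|\xi|^2)$, giving $t^{-1}$. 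Term (1) gives $t|\xi|^4\le Ct^{-1}$.
\item For $u$ with data $h_0$, the Biot--Savart factor $|\xi|^{-1}$ combines with the $|\xi|^2$ in the off-diagonal entry to give one power of $|\xi|$ already present, so the corrections carry $|\xi|^3$, giving $t^{-3/2}$.
\item For $h$, each order gains one extra $|\xi|$ relative to $u$ (curl versus $u$), giving $t^{-3/2}$ and $t^{-2}$.
\end{itemize}

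\textbf{Main obstacle.} The delicate step is tracking the $O(\varepsilon)$ correction of the slow eigenvector and eigenprojection precisely enough to reach these orders. In particular, the coefficient $-\frac14$ in front of $\Delta e^{\mu t\Delta}h_0$ must be exact, and the next-order terms must genuinely be $O(|\xi|^4)$ for the $h_0$ contribution to $h$. I would compute the projection explicitly as
\[
P_1=\frac{M-\lambda_2}{\lambda_1-\lambda_2},
\]
expanding $\lambda_{1,2}$ to second order in $\varepsilon$ with all entries kept exact. This avoids sign errors, and it makes uniformity for $\gamma\ge0$ automatic because $M$ is polynomial in $\varepsilon$.
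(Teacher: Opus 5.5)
Your proposal is correct and is essentially the paper's proof. The paper diagonalizes the same $2\times2$ symbol explicitly by Putzer's formula, which is exactly your decomposition $e^{tM}=e^{\lambda_1t}P_1+e^{\lambda_2t}P_2$ with $P_1=(M-\lambda_2)/(\lambda_1-\lambda_2)$. It then expands $\alpha-\sqrt D=\mu|\xi|^2+O(|\xi|^4)$ and the projection coefficients at low frequency, bounds the remaining frequencies by super-polynomially small factors, and concludes by Plancherel and $\sup_\xi|\xi|^ke^{-ct|\xi|^2}\lesssim t^{-k/2}$. Your fixed cutoff $r_0$ with a mean-value bound on $e^{\lambda_1t}-e^{-\mu t|\xi|^2}$, and your per-frequency energy bound at high frequency, are only mild simplifications of the paper's case splits ($t|\xi|^4\le1$ versus $t|\xi|^4>1$, and $\gamma>0$ versus $\gamma=0$).

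Do fix one sign error, since it sits exactly on the coefficient you flagged as delicate. The slow left eigenvector is $(1,+\tfrac{\varepsilon}{2}+O(\varepsilon^2))$, not $(1,-\tfrac{\varepsilon}{2}+\dots)$. Equivalently, $(P_1)_{12}=\tfrac{\varepsilon}{2}+O(\varepsilon^2)$ and $(P_1)_{22}=\tfrac{\varepsilon}{4}+O(\varepsilon^2)$, matching the paper's $E_{2,2}\approx\tfrac{|\xi|^2}{4}e^{-\mu t|\xi|^2}$. So the slow part is driven by $\widehat{\omega_0}+\tfrac{|\xi|^2}{2}\widehat{h_0}$. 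Your displayed $\tfrac12\bigl(\widehat{\omega_0}-\tfrac12|\xi|^2\widehat{h_0}\bigr)$ is instead the transform of $\tfrac12\omega_0+\tfrac14\Delta h_0$, and would leave an $O(t^{-1})$ rather than $O(t^{-2})$ error in the $h_0$-part of $h_L$. In the $u$-line this error is masked by a second sign error in your Biot--Savart law, which should read $\widehat{u_L}=-i\xi^\perp\widehat{\omega_L}/|\xi|^2$ with $\xi^\perp=(-\xi_2,\xi_1)$.
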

The proof of this theorem, long but elementary, is based on precise $L^\infty$-frequency estimates. The result of Theorem~\ref{th:linear} is quite surprising, because it reveals that {\emph{only the kinematic viscosity parameter}}
$\mu$ (and not the viscosities $\chi$ or $\gamma$) plays a role in the large time behavior of the linear system, up to the second order.
It is also remarkable that the above asymptotic profile remains unchanged in the spin-inviscid case. It means that, as far as large time behavior is concerned, the main term in the equation of $\hl$ is not the diffusion term $\gamma\Delta\hl$ nor the damping term $-4\chi\hl$ which one would expect to lead to exponential decay for $\hl$. Surprisingly, the driving term is $2\chi\curl\ul$.

\bigskip
In Section~\ref{sec:decay} we study the algebraic decay problem of finite energy solutions $(u,h)$ of the nonlinear system~\eqref{MP2D} and discuss to which extent, for large time, these solutions behave asymptotically
as the solutions of the corresponding linear problem $(\ul,\hl)$. 

As it is well known, this question was addressed before for the Navier-Stokes equations and answered
in~\cite{Wie87} for finite energy solutions.
(See also \cite{GW05} for the case of 2D infinite energy solutions with localized vorticity).
For the solution $u_{\text{NS}}$ of 2D Navier-Stokes equation, the linear counterpart is the solution
of the heat equation $e^{t\Delta}{u_0}$ with the same initial data and Wiegner's theorem~\cite{Wie87}
asserts that, if $0\le \Gamma\le2$ and $\|e^{t\Delta}u_0\|_{L^2}^2=O(t^{-\Gamma})$ then
$\|u_{\text{NS}}(t)\|_{L^2}^2=O(t^{-\Gamma})$. Moreover, when $0<\Gamma<2$, then
$u_{\text{NS}}(t)\approx e^{t\Delta}u_0$, in the $L^2$-norm.

Theorem~\ref{th:decay} below addresses these issues for the 2D micropolar system.
Remarkably, as we shall see, our results for the micropolar system go beyond those of the Navier-Stokes equations:
indeed, for the same initial velocity $u_0$, the solutions $(u,h)$ of the 2D micropolar system
{\emph{can dissipate their energy considerably faster than the corresponding solution}} $u_{\text{NS}}$,
due to the coupling (see the conclusion section). More precisely, Theorem~\ref{th:decay} states these decay issues as follows.
\begin{theorem}
\label{th:decay}
Let $\mu,\chi>0$ and $\gamma\ge0$.
Let $(u,h)$ be a finite energy solution
to~\eqref{MP2D}, with initial data $(u_0,h_0)\in L^2_\sigma(\R^2)\times L^2(\R^2)$.
Then, the following conclusions hold:
\begin{itemize}
\item[i)] If $0 \le \Gamma\le 2$ and 
$\|e^{\mu t\Delta}(u_0-\frac12\nabla^\perp h_0)\|_{L^2}^2=O(t^{-\Gamma})$ as $t\to+\infty$,
then 
\[
\| u(t)\|^2_{L^2}=O(t^{-\Gamma})
\qquad
\text{and}
\qquad
\| h(t)\|^2_{L^2}=O(t^{-\Gamma-1})
\qquad\text{as\,\, $t\to+\infty$}.
\]
\item[ii)] 
Under the condition of the previous item (in the case $\gamma=0$ 
we also require the additional conditions $\int (1+|\xi|)|\widehat u_0(\xi)|\dd\xi <\infty $ and  
$\int (1+|\xi|)\,|\widehat h_0(\xi)|\dd \xi<\infty$), we have
	\begin{equation}
	    \label{diff-z}
	\|u(t)-\ul(t)\|^2_{L^2}+\|h(t)-\hl(t)\|^2_{L^2}  \lesssim
	\begin{cases}
		(1+t)^{-2\Gamma}& \text{if $0<\Gamma<1$}\\
	(1+t)^{-2}(\log(e+t))^2 &\text{if $\Gamma=1$}\\
	(1+t)^{-2}&\text{if $1<\Gamma\le 2$} .
	\end{cases}
	\end{equation}
Moreover, when $\gamma>0$, we actually get the following faster $L^2$-decay for $h-\hl$: 
\begin{equation}
    \label{diff-h}
\|h(t)-\hl(t)\|^2_{L^2} \lesssim
	\begin{cases}
	(1+t)^{-1-2\Gamma}& \text{if $0<\Gamma<1$}\\
	(1+t)^{-3}(\log(e+t))^2 &\text{if $\Gamma=1$}\\
	(1+t)^{-3}&\text{if $1<\Gamma\le 2$} .
	\end{cases}
	\end{equation}
\end{itemize} 
\end{theorem}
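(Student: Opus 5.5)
For part i) we will use the Fourier splitting method of Schonbek and Wiegner, driven by a dissipative functional adapted to the coupled system. From the energy equality of Section~\ref{sec:existence},
\[
\tfrac12\tfrac{d}{dt}\bigl(\|u\|_{L^2}^2+\|h\|_{L^2}^2\bigr)+\mu\|\nabla u\|_{L^2}^2+\gamma\|\nabla h\|_{L^2}^2+\chi\|\curl u-2h\|_{L^2}^2=0 .
\]
The dissipation controls $\mu\|\nabla u\|^2$ and $\chi\|\curl u-2h\|^2$, but it gives no damping of $h$ by itself. To get one, we introduce the modified variable $v:=u-\frac12\nabla^\perp h$, which is divergence free. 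The linear analysis of Theorem~\ref{th:linear} singles out $v$ as the quantity transported by the heat kernel $e^{\mu t\Delta}$. Writing $h=\frac12\curl u-\frac12(\curl u-2h)$, we have $\|h\|^2\lesssim\|\nabla u\|^2+\|\curl u-2h\|^2$, so the energy dissipation dominates $c(\|\nabla u\|^2+\|h\|^2)$. Applying Fourier splitting on the ball $|\xi|\le \rho(t)=(k/(t+1))^{1/2}$ then yields
\[
\tfrac{d}{dt}E+\tfrac{c k}{1+t}E\lesssim \tfrac{k}{1+t}\int_{|\xi|\le\rho}|\widehat u|^2\,d\xi .
\]
Here $E$ is a slightly modified energy, $\|u\|^2+\|h\|^2$ with a small multiple of $\|h\|^2$ absorbed.

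The low-frequency part will be controlled by Duhamel's formula around the linear semigroup. For $|\xi|$ small, Theorem~\ref{th:linear}, and more precisely the pointwise Fourier symbol bounds behind it, show that $\widehat u$ is governed by $e^{-\mu t|\xi|^2}\widehat v_0$ up to faster-decaying terms. This gives $\int_{|\xi|\le\rho}|\widehat{u_L}|^2\lesssim t^{-\Gamma}+t^{-2}$. The nonlinear terms $\dive(u\otimes u)$ and $\dive(uh)$ have Fourier transforms bounded by $|\xi|\,\|u\|(\|u\|+\|h\|)$. Bootstrapping as in Wiegner, first using only $E\to0$ (from (iii)) and then iterating with the improved rates, we obtain $\|u\|^2=O(t^{-\Gamma})$ for all $\Gamma\le2$, with the cap at $2$ coming from the nonlinear term. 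For $h$ we then use a second functional: the time-weighted enstrophy-type quantity $\psi$ of Section~\ref{sec:val-ide}, combined with the monotonicity argument (Zingano's method as in Section~\ref{sec-energy}). This gives $t\|\nabla u\|^2+t\|h\|^2\lesssim t^{-\Gamma}$, i.e. $\|h\|^2=O(t^{-\Gamma-1})$, because $\int_{t/2}^t(\|\nabla u\|^2+\|h\|^2)\lesssim E(t/2)$ and $\psi$ is decreasing. For $\gamma=0$ this rests on the a.e.\ monotonicity of $\psi$ already established for finite energy solutions.

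For part ii) we write $w=u-u_L$, $g=h-h_L$, which satisfy the linear system forced by $-\mathbb P\dive(u\otimes u)$ and $-\dive(uh)$. We estimate $w,g$ either by an energy-plus-Fourier-splitting argument (the same dissipation structure applies to $(w,g)$) or directly through Duhamel with the linear symbol bounds. The low-frequency contribution is bounded by
\[
\Bigl(\int_0^t\|u\|_{L^2}(\|u\|_{L^2}+\|h\|_{L^2})\,ds\Bigr)^2\rho^{4}\cdots,
\]
and the time integral $\int_0^t(1+s)^{-\Gamma}ds$ produces exactly the three regimes $t^{2-2\Gamma}$, $\log^2 t$, and $O(1)$. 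Multiplying by the factor $t^{-2}$ from $\int_{|\xi|\le\rho}|\xi|^2$ gives \eqref{diff-z}. When $\gamma>0$, the extra parabolic smoothing of $h$ and the factor $|\xi|$ in the symbol relating $\widehat g$ to $\widehat w$ at low frequency yield an extra $t^{-1}$, giving \eqref{diff-h}. When $\gamma=0$, the $h_L$ component has no smoothing at high frequency: the high-frequency eigenvalue of the symbol tends to $-4\chi$ rather than $-\infty$. The extra hypotheses $(1+|\xi|)\widehat u_0,(1+|\xi|)\widehat h_0\in L^1$ are used to give $\|\nabla u_L\|_{L^\infty}$ and $\|h_L\|_{L^\infty}$ integrable bounds, so that the cross terms $w\cdot\nabla u_L$ and $w\cdot\nabla h_L$ can be handled by Gronwall.

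The main obstacle is the case $\gamma=0$ in both parts. In i), the $h$-decay relies on the merely a.e.\ monotone $\psi$. In ii), the energy estimate for $g$ involves $\int (w\cdot\nabla h_L)g$, where we have no smoothing for $h_L$ and only $L^2$ control of $g$. Our first attempt will be to bound $\|\nabla h_L(t)\|_{L^\infty}\lesssim (1+t)^{-3/2}$ via the Fourier-$L^1$ assumption plus the explicit linear symbols (decaying part $e^{-4\chi t}$ at high frequency, heat-like at low frequency). This is integrable in time and closes the Gronwall argument.
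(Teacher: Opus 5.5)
Your part i) and the bound \eqref{diff-z} follow essentially the paper's route, with some minor variations. The shared steps are: the damping-enhanced energy inequality with Fourier splitting; the pointwise Duhamel bound $|\widehat z-\widehat{z_L}|(\xi,t)\lesssim|\xi|\int_0^t\|z\|_{L^2}^2$; monotonicity of the enstrophy-type functional to get the extra decay of $h$; and an energy estimate for $z-z_L$ driven by $L^\infty$ bounds on $z_L$, which use the Fourier-$L^1$ hypothesis when $\gamma=0$.

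Your shortcut $t\psi(t)\lesssim\int_{t/2}^t\psi\lesssim\|z(t/2)\|_{L^2}^2$ is fine, and simpler than the paper's $\psi_\alpha$ argument, because Fourier splitting already controls $\|u\|_{L^2}^2+\|h\|_{L^2}^2$ jointly. Your Gronwall treatment of the cross terms also works for $\Gamma>0$. The paper avoids Gronwall instead, by using $\int(u\cdot\nabla u)\cdot w=-\int(u\cdot\nabla w)\cdot u_L$ and $\int(u\cdot\nabla h)g=\int(u\cdot\nabla h_L)g$.

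One step needs more than you wrote. In 2D the term $\rho^4\bigl(\int_0^t\|z\|_{L^2}^2\bigr)^2$ is scale-critical, so inserting only $\|z(t)\|_{L^2}\to0$ returns $o(1)$ and no rate. You have two options:
\begin{itemize}
\item absorb it: fix $T$ with $\|z\|_{L^2}^2\le\eta$ on $[T,\infty)$, then absorb the resulting $\eta$-term into $\sup_{s\le t}(1+s)^\Gamma\|z(s)\|_{L^2}^2$;
\item or, as the paper does, first extract logarithmic decay with a splitting radius $\rho(t)^2\sim\bigl((t+A)\log(t+A)\bigr)^{-1}$.
\end{itemize}

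\textbf{The genuine gap is the faster rate \eqref{diff-h} for $\gamma>0$.} The factor $|\xi|$ in $E_{2,1}i\xi^\perp$ only improves the low-frequency piece $\int_{|\xi|\le\rho}|\widehat g|^2$; it says nothing about the rest of $\|g\|_{L^2}$.

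Since $\partial_t g=\gamma\Delta g-4\chi g+2\chi\curl w-u\cdot\nabla h$, Duhamel with $e^{t(\gamma\Delta-4\chi)}$ shows that $\|g(t)\|_{L^2}$ is an exponentially weighted average, on unit time scales, of $\|\curl w(s)\|_{L^2}$ and of the nonlinear term for $s$ near $t$. You therefore need two pointwise-in-time bounds:
\begin{itemize}
\item $\|\nabla(u-u_L)(t)\|_{L^2}^2\lesssim t^{-1}\zeta_\Gamma(t)$, where $\zeta_\Gamma$ is the right-hand side of \eqref{diff-z};
\item a bound of order $t^{-\Gamma-3/2}$ for $u\cdot\nabla h$ in a suitable norm.
\end{itemize}
Your energy/Fourier-splitting estimate for $(w,g)$ gives only $\int_{t/2}^t\|\nabla w\|_{L^2}^2\lesssim\zeta_\Gamma(t)$. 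That is an average over a dyadic interval, which does not prevent concentration near $s=t$, so it cannot control a convolution against $e^{-4\chi(t-s)}$. Moreover, in the coupled estimate the rate of $\|w\|^2+\|g\|^2$ is dictated by $w$, so no extra $t^{-1}$ for $g$ can be read off from it. Parabolic smoothing of $h$ does not help either: the obstruction is the time decay of $\nabla(u-u_L)$ and of $\nabla h$, not regularity.

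The paper supplies exactly this missing ingredient:
\begin{itemize}
\item the higher-order decay $\|\nabla^j z\|_{L^2}^2=O(t^{-\Gamma-j})$ and $\|\nabla h\|_{L^2}^2=O(t^{-\Gamma-2})$, imported from \cite{Cesar-RG-Zingano-CP2023} and valid for $\gamma>0$;
\item Gagliardo--Nirenberg, giving $\|\nabla^\ell z\|_{L^\infty}=O(t^{-\Gamma/2-(\ell+1)/2})$;
\item a Duhamel estimate for $\nabla(z-z_L)$ split at $t/2$, using the $L^1$--$L^2$ bound \eqref{pl1}.
\end{itemize}
Only after these does Duhamel for $h-h_L$ with $e^{t(\gamma\Delta-4\chi)}$ produce $t^{-1}\zeta_\Gamma(t)$. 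Your proposal needs this, or an equivalent argument yielding pointwise decay of $\|\nabla(u-u_L)(t)\|_{L^2}$ and of $\|\nabla h(t)\|_{L^2}$.
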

In Eq.~\eqref{diff-z}, the right-hand side is the same as in Wiegner's paper~\cite{Wie87}.
In the case $\gamma=0$, the presence of the decay is made possible by the presence of $\curl u$ and the damping term in the equation of $h$, which compensate for the absence of spin diffusion.

The Littlewood-Paley analysis (see, e.g.,~\cite[Chapter~2]{BahCD11}) allows us to reformulate our condition on the data
in terms of Besov spaces:
\[
\|e^{\mu t\Delta}(u_0-\textstyle\frac12\nabla^\perp h_0)\|_{L^2}^2\lesssim t^{-\Gamma}
\iff 
u_0-\textstyle\frac12\nabla^\perp h_0 \in \dot B^{-\Gamma}_{2,\infty}(\R^2).
\]
The most important novel feature of Theorem~\ref{th:decay}, is that the condition on the data to obtain the algebraic decay is put on 
$u_0-\textstyle\frac12\nabla^\perp h_0$, and not on both $u_0$ and $h_0$.
For example, if $u_0\in L^1\cap L^2_\sigma(\R^2)$ and $h_0\in L^2(\R^2)$, then Theorem~\ref{th:decay}
applies with $\Gamma=1$ without any additional condition being required for $h_0$.
In the conclusion section, we will illustrate further applications of Theorem~\ref{th:decay}.

	\subsection*{Context and Discussion}

	Regarding the available results in the literature concerning the existence of solutions and asymptotic behavior when $\gamma$ is positive or zero, we will only mention those most directly related to our work, as there are numerous results in various aspects concerning the micropolar system, especially in the case $\gamma > 0$. The case $\gamma=0$  is mathematically more challenging 
 and received less attention in the literature.
	
For instance, in the case of full dissipation, i.e., when $\gamma$ is positive, 
Galdi and Rionero \cite{Galdi-Rionero} showed that the existence theory for the system \eqref{MP2D} (and also \eqref{MP}) does not differ from the Navier-Stokes case. Specifically, they proved (see also Lukaszewicz \cite{Lukaszewicz}) the existence of global weak solutions in this case, following the methods developed by Temam \cite{Temam} and Ladyzhenskaya \cite{Ladyzhenskaya}. For the study of strong solutions, see \cite{Rojas-Medar} in the case $\gamma > 0$. 

In the case $\gamma=0$,
\cite{Dong-Zhang} establishes the existence and the uniqueness of global strong solutions assuming the initial data in $H^s(\R^2)$, with $s>2$.
In Theorem~\ref{th-existence} below, we obtain the global existence of finite energy solutions under more general
$L^2$-assumptions.

We now provide a brief overview of key results concerning asymptotic behavior. 
For the Navier-Stokes equations, this problem goes back to Leray, who raised the question
whether or not the weak solutions he constructed in his pioneering paper
\cite{Leray1934} go to zero in $L^2$ as $t \to \infty$. 
This problem was resolved by Kato \cite{Kato1984} and Masuda \cite{Masuda1984} fifty years later. Subsequently, Schonbek \cite{Sch85} obtained the first explicit decay rates by using her well-known {\it Fourier Splitting} technique, under an additional  $L^p$-condition, $1\le p<2$, on the initial data.
 The optimal decay rates were obtained in the already mentioned paper by Wiegner \cite{Wie87}.  

    For the micropolar problem \eqref{MP} (and \eqref{MP2D}), in the case $\gamma > 0$, the authors in \cite{GuterresNunesPerusato2019} showed that $\| u(t) \|_{L^2} = o(1)$ and $\| h(t) \|_{L^2} = o(t^{-1/2})$ as $t \to \infty$. Regarding results on the asymptotic behavior when $\gamma > 0$, we refer to \cite{BCFZ}, where the authors obtained the corresponding decay results in 3D for initial data in $L^1 \cap L^2_\sigma$ using the Fourier Splitting technique.
When $\gamma=0$, nothing has been done on the large time behavior until very recently:
we can mention only \cite {Guo-Jia-Dong} and \cite{Niu-Shang}, but the analysis therein is mainly formal, 
as the solutions considered in these articles are not known to possess the regularity needed
to justify all the computations. Moreover, Theorem~\ref{th:decay} covers
a wider range of decay rates and better decay exponents for the microrotation $h$
than in \cite{Guo-Jia-Dong,Niu-Shang}.

In the case $\gamma>0$, there are several results in the literature concerning the study of existence and time decay of {\it strong solutions}, including decay results of higher order derivatives.
These results do not extend to the case $\gamma=0$ (unless putting stringent regularity conditions on the data) and lie beyond the scope of the present work.

\subsection*{Notations}

As mentioned before, $(\ul,\hl)$ denote the solution of the linear
system~\eqref{sysL-intro}, with initial data $(u_0,h_0)$.
We will denote $\oml=\curl\ul$. For convenience, we shall also use the following notations throughout this work. The fluid vorticity will be denoted by $\omega:=\curl u$.  Furthermore, we shall denote $z=(u,h)$, $\zl=(\ul,\hl)$ and $z_0=(u_0,h_0)$.

The symbol $\mathbb{P}$ will denote the Leray projector, and $L^2_\sigma$ will represent the space 2D vector fields in $L^2(\R^2)$ that are divergence-free in the sense of distributions. 
In our estimates, we will often write 
$f(t)\lesssim g(t)$ to indicate there is a constant $C>0$, independent on time, such that $f(t)\le C g(t)$. The symbol $\mathcal{F}$ denotes the Fourier transform. We will also use the classical notation $H^s$ for $L^2$ based Sobolev spaces and $B^{s}_{p,q}$ for Besov spaces, and their homogeneous counterparts are denoted with a dot on top.

\section{Existence of global finite energy solutions}
\label{sec:existence}

This section aims to prove the global existence of finite energy solutions of \eqref{MP2D}, and the important fact that they verify the energy equality. 

Observe first that applying the Leray projector to the first equation of \eqref{MP2D} yields the following equivalent formulation:
\begin{equation}\label{sysNLproj}
\left\{
\begin{aligned}
  \partial_t u+\P(u\cdot\nabla u)&=(\mu+\chi)\Delta u-2\chi\nabla^\perp h\\
  \partial_t h+u\cdot\nabla h&=\gamma\Delta h+2\chi\curl u-4\chi h.
\end{aligned}
\right.
\end{equation}

The $L^2$ energy estimates for the above system of PDEs consist in multiplying the first equation by $u$, the last equation by $h$, and integrating in space. Doing some integrations by parts and using the classical cancellations $\int u\cdot\nabla u \cdot u=\int u\cdot\nabla h \cdot h=\int\nabla p\cdot u=0$ we formally obtain the following relation:
\begin{equation*}
\frac12\partial_t (\nl2u^2+\nl2h^2)+(\mu+\chi)\nl2{\nabla u}^2+\gamma\nl2{\nabla h}^2+4\chi\nl2h^2-4\chi \int_{\R^2}h\curl u=0
\end{equation*}
Observing by integrations by parts that
\begin{equation*}
\nl2{\nabla u}=\nl2{\curl u}
\end{equation*}
and integrating in time, we obtain the following a priori estimate: 
\begin{equation*}
\nl2{u(t)}^2+\nl2{h(t)}^2+2\mu\int_s^t\nl2{\nabla u}^2+2\gamma\int_s^t\nl2{\nabla h}^2+2\chi\int_s^t\nl2{\curl u-2h}^2=\nl2{u(s)}^2+\nl2{h(s)}^2.
\end{equation*}

This motivates the following definition of finite energy solutions of \eqref{MP2D}.
\begin{definition}[finite energy solutions]\label{fesol}
Assume that $\gamma\geq0$, $u_0\in L^2_\sigma(\R^2)$ and $h_0\in L^2(\R^2)$. A finite energy solution of \eqref{MP2D} with initial data $(u_0,h_0)$ is a couple $(u,h)$  such that:
\begin{enumerate}[label=\alph*)]
\item $u\in L^\infty(\R_+;L^2_\sigma(\R^2))\cap C^0_w([0,\infty);L^2_\sigma(\R^2))\cap L^2(\R_+;\dot H^1(\R^2))$;
\item $h\in L^\infty(\R_+;L^2(\R^2))\cap C^0_w([0,\infty);L^2(\R^2))\cap L^2(\R_+;\dot H^1(\R^2))$, if $\gamma>0$,\\ and $h\in L^\infty(\R_+;L^2(\R^2))\cap C^0_w([0,\infty);L^2(\R^2))$, if $\gamma=0$;
\item There exists some scalar pressure $p=p(x,t)$ such that \eqref{MP2D} holds true in the sense of the distributions;
\item $u\big|_{t=0}=u_0$ and $h\big|_{t=0}=h_0$.
\end{enumerate}
\end{definition}
\begin{remark}
Above, the notation $f\in  L^2(\R_+;\dot H^1(\R^2))$ must be understood in the sense that $\nabla f\in  L^2(\R_+\times\R^2)$.
\end{remark}
\begin{remark}
The condition that $u,h\in C^0_w([0,\infty);L^2(\R^2))$ is only given to make sense of $u(t)$ and $h(t)$ for all $t\geq0$, and especially for $t=0$. It always holds true if $u,h\in L^\infty(\R_+;L^2(\R^2))$ and verify  \eqref{MP2D} in the sense of the distributions. Indeed, solutions in the sense of the distributions of \eqref{MP2D} are easily seen to belong to $C^0([0,\infty);\D'(\R^2))$ and $C^0([0,\infty);\D'(\R^2))\cap L^\infty(\R_+;L^2(\R^2)) \subset C^0_w([0,\infty);L^2(\R^2))$. We will see below that $u$ and $h$ are actually strongly continuous in time with values in $L^2(\R^2)$. 
\end{remark}

Let $\jde=j_\delta\ast$ be a Friedrichs mollifier, that is $\jde$ is the convolution operator with the function
\begin{equation*}
j_\delta=\delta^{-2}j(\delta^{-1}x)
\end{equation*}
where $j\in C^\infty_0(\R^2)$ is nonnegative, even and $\int_{\R^2}j=1$.

We recall the following classical commutator lemma, in the spirit of the celebrated DiPerna-Lions theory, see \cite{diperna_ordinary_1989}.
\begin{lemma}\label{lions}
Let $I$ be an interval of $\R$, $v\in L^a(I;W^{1,p}(\R^n))$ a vector field and $g\in L^b(I;L^q(\R^n))$. Let $c,r$ be such that $\frac1r=\frac1p+\frac1q$, $\frac1c=\frac1a+\frac1b$ and assume that $c,r\in[1,\infty)$ and $a,b,p,q\in[1,\infty]$. We have that
\begin{equation*}
\jde\dive(vg)-\dive(v\jde g)\stackrel{\delta\to0}\longrightarrow0\quad\text{in }L^c(I;L^r(\R^n)).
\end{equation*}
\end{lemma}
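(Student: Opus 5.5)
The plan is the standard DiPerna--Lions argument: a uniform bound on the commutator for smooth data, followed by density. Define the bilinear operator
\[
R_\delta(v,g):=\jde\dive(vg)-\dive(v\jde g).
\]

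\textbf{Step 1 (pointwise-in-time bound).} Fix $t$ and suppress it. Assume first that $v$ and $g$ are smooth and compactly supported. Since $j$ is even, writing the convolution explicitly gives
\[
R_\delta(v,g)(x)=\int \nabla j_\delta(x-y)\cdot\bigl(v(y)-v(x)\bigr)g(y)\,dy-(\dive v)(x)\,\jde g(x).
\]
In the first term, substitute $y=x-\delta z$. Then
\[
\nabla j_\delta(x-y)\,dy=\delta^{-1}(\nabla j)(z)\,dz,
\]
and
\[
|v(x-\delta z)-v(x)|\le \delta|z|\int_0^1|\nabla v(x-s\delta z)|\,ds .
\]
The factor $\delta^{-1}$ cancels the $\delta$. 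Because $\operatorname{supp} j$ is bounded, Minkowski's inequality in $(z,s)$ followed by Hölder's inequality with $\frac1r=\frac1p+\frac1q$ gives
\[
\|R_\delta(v,g)\|_{L^r}\le C\,\|\nabla v\|_{L^p}\|g\|_{L^q},
\]
where $C$ depends only on $j$. The term $\dive v\,\jde g$ is bounded in the same way. By density, the bound extends to all $v\in W^{1,p}$ and $g\in L^q$; when $p=\infty$ or $q=\infty$, use approximation in the weak-$*$/a.e. sense only for the factor that is not being approximated, which is addressed in Step 3. Integrating in time and applying Hölder with $\frac1c=\frac1a+\frac1b$ yields
\[
\|R_\delta(v,g)\|_{L^c(I;L^r)}\le C\,\|v\|_{L^a(I;W^{1,p})}\|g\|_{L^b(I;L^q)},
\]
uniformly in $\delta$.

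\textbf{Step 2 (smooth case).} If $v$ and $g$ are smooth with compact support in $I\times\R^n$, then
\[
\jde\dive(vg)\to\dive(vg)
\quad\text{and}\quad
\dive(v\jde g)\to \dive(vg)
\]
in $L^c(I;L^r)$. Hence $R_\delta(v,g)\to0$ in this case.

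\textbf{Step 3 (density).} By bilinearity,
\[
R_\delta(v,g)=R_\delta(v-v_k,g)+R_\delta(v_k,g-g_k)+R_\delta(v_k,g_k).
\]
By Step 1, the first two terms are bounded by $C\|v-v_k\|\,\|g\|+C\|v_k\|\,\|g-g_k\|$, uniformly in $\delta$, and Step 2 handles the third term. This concludes the argument whenever smooth functions are dense in both $L^a(I;W^{1,p})$ and $L^b(I;L^q)$.

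\textbf{Main obstacle: endpoint exponents.} The difficulty lies in the endpoint cases, where density fails. Since $c,r<\infty$, at most one of $a,b$ and at most one of $p,q$ can be infinite. I would handle these cases as follows.

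If $q=\infty$ (so $p<\infty$), approximate only $v$, which is possible because $p<\infty$. For the remaining term $R_\delta(v_k,g)$ with $v_k$ smooth, use the explicit formula from Step 1. The integrand converges a.e. to
\[
-\int (\nabla v_k\,z)\cdot\nabla j(z)\,dz\;g-\dive v_k\,g = 0,
\]
by Lebesgue points of $g$ and the identity $\int z_i\partial_l j=-\delta_{il}$. Together with dominated convergence on the compact support, this gives the claim.

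The case $p=\infty$ is symmetric: approximate $g$ (possible since $q<\infty$), then treat the fixed $v\in W^{1,\infty}$ by the same a.e. argument, using the differentiability a.e. of Lipschitz functions.

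An infinite time exponent $a$ or $b$ is treated in the same way, by dominated convergence in $t$.
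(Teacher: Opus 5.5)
Your proof is correct, but it is organized differently from the paper's.

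\textbf{The paper's route.} The paper does not prove the spatial statement at all. It quotes the time-independent commutator lemma of P.-L.~Lions (\cite[Lemma 2.3]{lions_mathematical_1996}). That lemma gives, for a.e.\ $t$, both the convergence $F_\delta(t)=\|\jde\dive(v(t)g(t))-\dive(v(t)\jde g(t))\|_{L^r}\to0$ and the bound $F_\delta(t)\le C\|v(t)\|_{W^{1,p}}\|g(t)\|_{L^q}$. The paper then concludes by dominated convergence in $L^c(I)$, which handles every admissible choice of $a,b,p,q$ uniformly.

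\textbf{Your route.} You reprove Lions' lemma from scratch: the kernel formula with $y=x-\delta z$ for the uniform bound, density when $p,q<\infty$, and the Lebesgue-point/Rademacher arguments with $\int z_i\partial_l j=-\delta_{il}$ when $q=\infty$ or $p=\infty$. This makes the argument self-contained, at the price of length. You then treat time mainly through density in space--time, which requires the time and space exponents of the approximated factor to be finite.

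\textbf{Infinite time exponents.} Your phrase ``by dominated convergence in $t$'' only works once the full fixed-$t$ statement is established. That means your Steps 1--3 with $t$ frozen, including both spatial endpoint cases. For example, when $a=\infty$ and $p,q<\infty$, you would approximate $g$ only. Then you need $R_\delta(v(t),g_k(t))\to0$ for $v(t)\in W^{1,p}$, which is not Lipschitz, so only the spatial density argument applies. Once you have the fixed-$t$ lemma, you are exactly at the paper's argument, with your Step 1 bound as the dominating function, and the space--time density step becomes superfluous.

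\textbf{A small clean-up.} The sentence in Step 1 about approximating ``in the weak-$*$/a.e.\ sense'' is muddled but harmless, because the bound never uses regularity of $g$. For $p=\infty$ it follows directly from $|v(x-\delta z)-v(x)|\le\delta|z|\,\|\nabla v\|_{L^\infty}$. For $p<\infty$ density in $v$ alone suffices, since for fixed $\delta$ the map $v\mapsto\jde\dive(vg)-\dive(v\jde g)$ is continuous from $W^{1,p}$ to $L^r$.
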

\begin{proof}
If we omit the time dependence, this is \cite[Lemma 2.3]{lions_mathematical_1996}. The time-dependent version stated above follows from the dominated convergence theorem applied in $L^c(I)$ to the function
\begin{equation*}
F_\delta(t)=\nl{r}{\jde\dive(v(t)g(t))-\dive(v(t)\jde g(t))}.
\end{equation*}
Indeed, we have that $F_\delta(t)\to0$ a.e. when $\delta\to0$ as a consequence of \cite[Lemma 2.3]{lions_mathematical_1996} and, thanks to the same lemma, we also have the domination
\begin{equation*}
F_\delta(t)\leq C\|v(t)\|_{W^{1,p}}\nl{q}{g(t)}\in L^c(I).
\end{equation*}
\end{proof}

We prove now that all finite energy solutions verify the $L^2$ energy equality and are strongly continuous in time with values in $L^2$.
\begin{proposition}\label{prop-energ}
Assume that $\gamma\geq0$, $u_0\in L^2_\sigma(\R^2)$ and $h_0\in L^2(\R^2)$. Let $(u,h)$ be a finite energy solution of \eqref{MP2D} with initial data $(u_0,h_0)$. Then the following $L^2$ energy equality holds:
\begin{multline}\label{enereq}
\nl2{u(t)}^2+\nl2{h(t)}^2+2\mu\int_s^t\nl2{\nabla u}^2+2\gamma\int_s^t\nl2{\nabla h}^2
+2\chi\int_s^t\nl2{\curl u-2h}^2\\
=\nl2{u(s)}^2+\nl2{h(s)}^2,\qquad\forall\ 0\leq s\leq t<\infty.
\end{multline}
In addition, $u,h\in C^0\big([0,\infty);L^2(\R^2)\big)$.
\end{proposition}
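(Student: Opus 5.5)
We mollify the system in space with $\jde$ and then let $\delta\to0$, treating $u$ and $h$ separately. Since we are in 2D and $u\in L^\infty(L^2)\cap L^2(\dot H^1)$, Ladyzhenskaya's inequality gives $u\in L^4(\R_+\times\R^2)$. Hence $u\otimes u\in L^2(L^2)$, and $u\cdot\nabla u\in L^1_{loc}(L^1)$ with $\nabla u\in L^2(L^2)$.

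\textbf{The velocity equation.} This is handled exactly as for 2D Navier--Stokes. We have $\partial_t u\in L^2_{loc}(H^{-1})$: indeed $\dive(u\otimes u)\in L^2(H^{-1})$, $\Delta u\in L^2(H^{-1})$, and $\nabla^\perp h\in L^\infty(H^{-1})$. Together with $u\in L^2_{loc}(H^1)$, the Lions--Magenes lemma shows that $t\mapsto\nl2{u(t)}^2$ is absolutely continuous with derivative $2\langle\partial_t u,u\rangle$. The trilinear term $\int (u\cdot\nabla u)\cdot u$ vanishes, since for divergence-free $u\in H^1\cap L^4$ it equals $\frac12\int u\cdot\nabla|u|^2=0$. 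The pressure term disappears after using the projected form \eqref{sysNLproj}. This yields
\[
\frac{d}{dt}\tfrac12\nl2u^2+(\mu+\chi)\nl2{\nabla u}^2=-2\chi\int\nabla^\perp h\cdot u=2\chi\int h\curl u ,
\]
with the right-hand side in $L^1_{loc}$. The same computation gives $u\in C^0([0,\infty);L^2)$.

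\textbf{The microrotation equation when $\gamma>0$.} Here $h\in L^2(H^1)$. The transport term $u\cdot\nabla h=\dive(uh)$ belongs to $L^1_{loc}(L^1)$, but we need it in $L^2(H^{-1})$ or in duality with $h$. We use $uh\in L^2(L^2)$, since $u\in L^4$ and $h\in L^4$ by Ladyzhenskaya applied to $h$. So $\partial_t h\in L^2_{loc}(H^{-1})$, and the same Lions--Magenes argument applies. The transport term cancels: $\int u\cdot\nabla h\,h=0$, justified by density because $u,h\in L^4$ and $\nabla h\in L^2$.

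\textbf{The microrotation equation when $\gamma=0$.} This is the main obstacle, since $h$ is only in $L^\infty(L^2)$. We apply $\jde$ to the $h$-equation:
\[
\partial_t\hde+\dive(u\hde)=2\chi\jde\curl u-4\chi\hde+R_\delta,\qquad R_\delta=\dive(u\hde)-\jde\dive(uh).
\]
We apply Lemma~\ref{lions} with $v=u\in L^2(I;W^{1,2})$ on bounded $I$ (using $u\in L^2_{loc}(L^2)$) and $g=h\in L^\infty(L^2)$. This gives $p=q=2$, $r=1$, $a=2$, $b=\infty$, $c=2$, so $R_\delta\to0$ in $L^2(I;L^1)$.

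The difficulty is that we multiply by $\hde$, which is not bounded uniformly in $L^\infty$. We therefore want convergence in $L^1(I;L^2)$ paired with $\hde\in L^\infty(L^2)$, which Lemma~\ref{lions} gives only with $r=2$. That would require $q=\infty$ or $p=\infty$, and neither holds. Two alternatives are available. The first is to use instead $u\in L^4(W^{1,?})$-type regularity, or a renormalization by a bounded function $\beta(h)$ followed by truncation. The second is to pair differently: for fixed $\delta$, $\hde$ is smooth in $x$ with $\|\hde\|_{L^\infty}\le C_\delta\nl2h$, which does not help as $\delta\to0$.

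The cleanest route is a truncation/renormalization (DiPerna--Lions). First, $\beta(h)=\min(h^2,M)$-type renormalized equations hold because $R_\delta\to0$ in $L^1_{loc}$ and $\beta'(\hde)$ is bounded. Integrating in $x$, the transport term vanishes since $\dive u=0$, so $\int\beta(h)$ satisfies the expected identity. Letting $M\to\infty$ by monotone convergence gives
\[
\frac{d}{dt}\tfrac12\nl2h^2+4\chi\nl2h^2=2\chi\int h\curl u
\]
in integrated form. All terms in this limit are controlled because $h\curl u\in L^1_{loc}(L^1)$ and $h^2\in L^\infty(L^1)$.

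\textbf{Conclusion.} Adding the two identities and using $(\mu+\chi)\nl2{\nabla u}^2+4\chi\nl2h^2-4\chi\int h\curl u=\mu\nl2{\nabla u}^2+\chi\nl2{\curl u-2h}^2$ (recall $\nl2{\nabla u}=\nl2{\curl u}$) gives \eqref{enereq} for all $s\le t$. Finally, the energy equality makes $t\mapsto\nl2{h(t)}^2$ continuous. Combined with $h\in C^0_w(L^2)$, this yields strong continuity.
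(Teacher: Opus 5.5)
Your proof is correct and follows the paper's route. You use Lions--Magenes duality ($\partial_t u,\partial_t h\in L^2_{loc}(H^{-1})$) for the velocity equation, and for $h$ when $\gamma>0$. When $\gamma=0$ you mollify the $h$-equation, send the commutator to zero in $L^2(I;L^1)$ via Lemma~\ref{lions}, multiply by a bounded function of $\hde$, and remove the truncation at the end.

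The only difference is the renormalizing function. The paper uses the smooth $F_\sigma(s)=(\sqrt{1+\sigma s^2}-1)/\sigma$, whose derivative $f_\sigma$ is bounded and $1$-Lipschitz, and then lets $\sigma\to0$ by dominated convergence. This avoids the kink of $\min(h^2,M)$ at $|h|=\sqrt M$. Dominated convergence is also what your right-hand side needs when $M\to\infty$; monotone convergence only handles the left-hand side.
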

\begin{proof}
We consider first the case $\gamma>0$. We observe that 
\begin{equation*}
L^\infty(\R_+;L^2(\R^2))\cap L^2_{loc}([0,\infty);H^1(\R^2))\hookrightarrow L^4_{loc}([0,\infty);L^4(\R^2)).
\end{equation*}
We know that $u\in L^\infty(\R_+;L^2(\R^2))\cap L^2_{loc}([0,\infty);H^1(\R^2))$ so $u\in L^4_{loc}([0,\infty);L^4(\R^2))$. Then $u\otimes u\in L^2_{loc}([0,\infty);L^2(\R^2))$ so $u\cdot\nabla u=\dive(u\otimes u)\in L^2_{loc}([0,\infty);H^{-1}(\R^2))$. Obviously we also have that $\Delta u\in L^2_{loc}([0,\infty);H^{-1}(\R^2))$ and $\nabla^\perp h\in L^\infty(\R_+;H^{-1}(\R^2))\hookrightarrow L^2_{loc}([0,\infty);H^{-1}(\R^2))$. We infer that $\partial_t u=-\P(u\cdot\nabla u)+(\mu+\chi)\Delta u-2\chi\nabla^\perp h\in L^2_{loc}([0,\infty);H^{-1}(\R^2))$. Since all the terms in the equality
\begin{equation*}
  \partial_t u+\P(u\cdot\nabla u)=(\mu+\chi)\Delta u-2\chi\nabla^\perp h
\end{equation*}
belong to $L^2_{loc}([0,\infty);H^{-1}(\R^2))$, the relation above can be multiplied by $u$ which belongs to the dual space $L^2_{loc}([0,\infty);H^{1}(\R^2))$ (see for instance \cite[Chapter 6]{lions_quelques_1969}). The same argument holds for the equation of $h$, showing that the equation of $h$ can be multiplied by $h$. We proved that the formal calculations given at the beginning of this section are rigorous, and  \eqref{enereq} follows.

\medskip

We assume now that $\gamma=0$. Due to the absence of diffusion in the equation of $h$, the argument above does not apply to $h$.

We fix $0\leq s\leq t<\infty$. The same argument as in the case $\gamma>0$ shows that the equation of $u$ can be multiplied by $u$ and integrated in space and time from $s$ to $t$. We get 
\begin{equation}\label{eneru}
\nl2{u(t)}^2-\nl2{u(s)}^2+2\,(\mu+\chi)\int_s^t\int_{\R^2}|\nabla u|^2-4\chi \int_s^t\int_{\R^2}h\,\curl u =0 .
\end{equation}

It remains to prove that the equation of $h$
\begin{equation}\label{eqh}
  \partial_t h+u\cdot\nabla h=2\chi\curl u-4\chi h
\end{equation}
can be multiplied by $h$ and integrated in space and time from $s$ to $t$. That is we want to show that
\begin{equation}\label{enerh}
\nl2{h(t)}^2-\nl2{h(s)}^2=2\int_s^t\int_{\R^2}(2\chi\curl u-4\chi h)h\quad\text{for all } 0\leq s\leq t<\infty.
\end{equation}
Indeed, adding \eqref{eneru} and \eqref{enerh} implies the required energy equality \eqref{enereq}.

We cannot directly multiply \eqref{eqh} by $h$ because there is not sufficient regularity on $h$ to do so. Indeed, the term $u\cdot\nabla h$ has a derivative of $h$ and we have no information on the gradient of $h$. Even if the derivative would be on $u$ instead of $h$, we would still not be able to proceed with the regularity at hand. More precisely, we could not multiply a term of the form $h\nabla u$ by $h$ and integrate it in space and time. Indeed, we would not have sufficient integrability in space since, as far as we know, $h$ and $\nabla u$ belong to $L^2$ only and we would need at least $L^3$ integrability in space. In fact, \eqref{enerh} follows from the DiPerna-Lions theory (see \cite{diperna_ordinary_1989}). We give another proof below. The same argument will be used later in Proposition \ref{prop-deriv} in a setting where the DiPerna-Lions theory no longer applies. 

We view \eqref{eqh} as a transport equation by the vector field $u\in L^2_{loc}([0,\infty);H^{1}(\R^2))$ with forcing 
$$
g:=2\chi\curl u-4\chi h\in L^2_{loc}([0,\infty);L^2(\R^2)).
$$

Let $\hde=\jde h$. Let $\sigma>0$ be fixed. We introduce the following function $f_\sigma$ which depends on some small positive constant $\sigma$:
\begin{equation}\label{fsig}
f_\sigma(s):\R\to\R,\qquad f_\sigma(s)= \frac{s}{\sqrt{1+\sigma s^2}}.
\end{equation}
We have that $f_\sigma$ is smooth, 1-Lipschitz, increasing and bounded by $\sigma^{-\frac12}$. Let also $F_\sigma$ the primitive of $f_\sigma$ given by
\begin{equation}\label{Fsig}
F_\sigma(s)=\frac{\sqrt{1+\sigma s^2}-1}\sigma
=\frac{s^2}{1+\sqrt{1+\sigma s^2}}.
\end{equation}
One can check that $|F_\sigma(s_1)-F_\sigma(s_2)|\leq |s_1^2-s_2^2|$ for all $s_1,s_2\in\R$.

We apply $\jde$ to \eqref{eqh}, we multiply it by $f_\sigma(\hde)$ and integrate on $[s,t]\times\R^2$. We get
\begin{equation}\label{eqhep}
\int_s^t\int_{\R^2}\partial_t\hde f_\sigma(\hde)
+\int_s^t\int_{\R^2}\jde(u\cdot\nabla h)f_\sigma(\hde)
=\int_s^t\int_{\R^2}\jde (g) f_\sigma(\hde) .
\end{equation}

We will pass to the limit $\delta\to0$ in each of the terms above. Since $\jde$ is a mollifier, we know that $\hde\to h$ in $L^2((s,t)\times\R^2)$, that $\hde(t)\to h(t)$ and  $\hde(s)\to h(s)$ in $L^2(\R^2)$. After extracting a subsequence, also denoted by $\hde$, we have that  $\hde\to h$ almost everywhere on $(s,t)\times\R^2$ and $\hde(t)\to h(t)$ and  $\hde(s)\to h(s)$  almost everywhere in $\R^2$. In addition, there exist two functions $H_1\in L^2(\R^2)$ and $H_2\in L^2((s,t)\times\R^2))$
such that $|\hde(t)|\leq H_1$, $|\hde(s)|\leq H_1$ almost everywhere in $\R^2$ and $|\hde|\leq H_2$ almost everywhere on $(s,t)\times\R^2$.

We deal with the first term in \eqref{eqhep}. We have that
\begin{equation*}
\int_s^t\int_{\R^2}\partial_t\hde f_\sigma(\hde)
=\int_s^t\int_{\R^2}\partial_t \big(F_\sigma(\hde)\big)
=\int_{\R^2}F_\sigma(\hde(t))-\int_{\R^2}F_\sigma(\hde(s)). 
\end{equation*}
Clearly $|F_\sigma(s)|\leq s^2$ for all $s$, so $|F_\sigma(\hde(t))|\leq H_1^2$ and  $|F_\sigma(\hde(s))|\leq H_1^2$. Then we can pass to the limit $\delta\to0$ above with the Lebesgue dominated convergence theorem and find that
\begin{equation*}
\int_s^t\int_{\R^2}\partial_t\hde f_\sigma(\hde)\stackrel{\delta\to0}\longrightarrow \int_{\R^2}F_\sigma(h(t))-\int_{\R^2}F_\sigma(h(s)). 
\end{equation*}

Next, since  $\hde\stackrel{\delta\to0}\longrightarrow h$ in $L^2((s,t)\times\R^2)$ and $f_\sigma$ is 1-Lipschitz, we infer that $f_\sigma(\hde)\stackrel{\delta\to0}\longrightarrow f_\sigma(h)$ in $ L^2((s,t)\times\R^2)$. We also know that $g\in L^2((s,t)\times\R^2)$, so $\jde(g)\stackrel{\delta\to0}\longrightarrow g$  in $ L^2((s,t)\times\R^2)$. So we have the convergence of the last term in \eqref{eqhep}:
\begin{equation*}
\int_s^t\int_{\R^2}\jde (g) f_\sigma(\hde)\stackrel{\delta\to0}\longrightarrow \int_s^t\int_{\R^2}g  f_\sigma(h).
\end{equation*}

The middle term in \eqref{eqhep} is of course the most difficult to deal with. We write it as follows:
\begin{equation}\label{difficult-term}
\int_s^t\int_{\R^2}\jde(u\cdot\nabla h)f_\sigma(\hde)  
= \int_s^t\int_{\R^2}\big[\jde(u\cdot\nabla h)-u\cdot\nabla\jde h\big]f_\sigma(\hde)
+\int_s^t\int_{\R^2}u\cdot\nabla \hde f_\sigma(\hde).
\end{equation}
Since $u\in L^2(s,t;H^1(\R^2))$ and $h\in L^\infty(s,t;L^2(\R^2))$, we can apply Lemma \ref{lions} to deduce that
\begin{equation*}
\jde(u\cdot\nabla h)-u\cdot\nabla\jde h\stackrel{\delta\to0}\to0 \quad\text{in }L^2(s,t;L^1(\R^2)).
\end{equation*}
But $f_\sigma(\hde)$ is uniformly bounded in $L^\infty\big((s,t)\times\R^2\big)$ (by $\sigma^{-\frac12}$). So the commutator term goes to 0:
\begin{equation*}
\int_s^t\int_{\R^2}\big[\jde(u\cdot\nabla h)-u\cdot\nabla\jde h\big]f_\sigma(\hde)\stackrel{\delta\to0}\longrightarrow 0.
\end{equation*}
The last term in \eqref{difficult-term} vanishes:
\begin{equation*}
\int_s^t\int_{\R^2}u\cdot\nabla \hde f_\sigma(\hde)
=\int_s^t\int_{\R^2}u\cdot\nabla (F_\sigma(\hde))
=-\int_s^t\int_{\R^2}\dive u\, F_\sigma(\hde)
=0.
\end{equation*}

Putting together the relations above and letting $\delta\to0$ in \eqref{eqhep} implies that
\begin{equation*}
\int_{\R^2}F_\sigma(h(t))-\int_{\R^2}F_\sigma(h(s))=\int_s^t\int_{\R^2}g  f_\sigma(h).
\end{equation*}
Finally, we let $\sigma\to0$. Since $|F_\sigma(s)|\leq s^2$, $|f_\sigma(s)\leq |s|$, $\lim_{\sigma\to0}F_\sigma(s)=\frac{s^2}2$ and $\lim_{\sigma\to0}f_\sigma(s)=s$, we can use again the Lebesgue dominated convergence theorem to obtain
\begin{equation}\label{energy-h}
\frac1{2}\int_{\R^2}|h(t)|^2-\frac1{2}\int_{\R^2}|h(s)|^2
=\int_s^t\int_{\R^2}g h.
\end{equation}
This is relation \eqref{enerh} so the proof of the energy equality \eqref{enereq} is completed.

Finally, relation \eqref{energy-h} shows that the application $t\mapsto\nl2{h(t)}$ is continuous. The fact that $h\in C^0_w([0,\infty);L^2(\R^2))$ then implies that $h\in C^0([0,\infty);L^2(\R^2))$.
Similarly, relation \eqref{eneru} implies that $t\mapsto\nl2{u(t)}$ is continuous so $u\in C^0([0,\infty);L^2(\R^2))$. The time continuity in the case $\gamma>0$ follows in the same way. This completes the proof.
\end{proof}

We prove now that there exist finite energy solutions in the sense of Definition \ref{fesol}.
\begin{theorem}\label{th-existence}
Assume that  $u_0\in L^2_\sigma(\R^2)$ and $h_0\in L^2(\R^2)$. There exists a finite energy solution $(u,h)$ of \eqref{MP2D} with initial data $(u_0,h_0)$. 
\end{theorem}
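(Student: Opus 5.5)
We argue by regularization and compactness, with the only delicate point being the case $\gamma=0$. For $\gamma>0$ the result is classical (Galdi--Rionero, Lukaszewicz), so we concentrate on $\gamma=0$. We handle it by vanishing spin viscosity: for $\ep\in(0,1]$ we solve \eqref{MP2D} with $\gamma$ replaced by $\ep$ and with mollified data $(\jde u_0,\jde h_0)$, $\delta=\ep$. For this fully parabolic system, standard Galerkin or Friedrichs-type methods give a global smooth solution $(\uep,\hep)$ with data in $H^s$ for every $s$. By Proposition~\ref{prop-energ}, applied with $\gamma=\ep$, this solution satisfies the energy equality \eqref{enereq}.

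The energy equality provides bounds independent of $\ep$. The family $\uep$ is bounded in $L^\infty(\R_+;L^2)\cap L^2(\R_+;\dot H^1)$, while $\hep$ is bounded in $L^\infty(\R_+;L^2)$, and $\sqrt\ep\,\nabla\hep$ is bounded in $L^2(\R_+\times\R^2)$. From the equations we also control the time derivatives: $\partial_t\uep$ is bounded in $L^2_{loc}([0,\infty);H^{-1})$ (using $\uep\in L^4_{loc}L^4$ as in the proof of Proposition~\ref{prop-energ}). For $\partial_t\hep$ we write $\uep\cdot\nabla\hep=\dive(\uep\hep)$. Since $\uep\hep$ is bounded in $L^2_{loc}L^1\subset L^2_{loc}H^{-s}$ for $s>1$, and since $\ep\Delta\hep$, $\curl\uep$ and $\hep$ are bounded in $L^2_{loc}H^{-1}$, the derivative $\partial_t\hep$ is bounded in $L^2_{loc}([0,\infty);H^{-3})$, say. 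We then extract subsequences by Aubin--Lions on balls combined with a diagonal argument. This gives $\uep\to u$ strongly in $L^2_{loc}([0,\infty)\times\R^2)$, weakly in $L^2(\R_+;\dot H^1)$, and weak-$*$ in $L^\infty L^2$. For $\hep$ we get $\hep\rightharpoonup h$ weak-$*$ in $L^\infty L^2$, and also in $C^0([0,T];H^{-3}_{loc})$ by Arzelà--Ascoli.

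Passing to the limit in the linear terms is immediate, and $\ep\Delta\hep\to0$ in the sense of distributions. In the quadratic term $\uep\otimes\uep$ both factors converge strongly in $L^2_{loc}$, so the passage is standard. \textbf{The main obstacle} is the transport term $\uep\hep$, where $\hep$ converges only weakly. It is a product of a strongly convergent factor in $L^2_{loc}$ with a weakly convergent factor in $L^2_{loc}$, hence converges in $\D'$ to $uh$. For this weak--strong argument we need strong $L^2_{loc}$ convergence of $\uep$ in space-time, which Aubin--Lions provides. Thus $\dive(\uep\hep)\to\dive(uh)=u\cdot\nabla h$, and the pressure is recovered from the projected form \eqref{sysNLproj} in the usual way.

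The limit inherits the required regularity. We have $u\in L^\infty L^2_\sigma\cap L^2\dot H^1$ and $h\in L^\infty L^2$ by lower semicontinuity, weak continuity follows from the remark after Definition~\ref{fesol}, and the initial data are attained thanks to the $C^0([0,T];H^{-3}_{loc})$ convergence together with $\jde u_0\to u_0$ and $\jde h_0\to h_0$. Hence $(u,h)$ is a finite energy solution in the sense of Definition~\ref{fesol}. Proposition~\ref{prop-energ} then upgrades it a posteriori to satisfy the energy equality and to be strongly continuous in time. The same scheme with $\gamma>0$ fixed (no vanishing viscosity) covers the case $\gamma>0$, where the additional bound on $\nabla h$ passes to the limit by weak lower semicontinuity.
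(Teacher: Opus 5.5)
Your proposal is correct and follows essentially the paper's route. Both arguments use vanishing spin viscosity $\varepsilon\Delta h_\varepsilon$, uniform bounds from the energy equality, strong $L^2_{loc}$ compactness for $u_\varepsilon$ only, a weak--strong limit in $u_\varepsilon h_\varepsilon$, and recovery of the initial data via compactness in a negative Sobolev space. The differences are cosmetic: you mollify the data and apply Aubin--Lions with the sharper $L^2_{loc}H^{-1}$ bound on $\partial_t u_\varepsilon$, whereas the paper keeps $(u_0,h_0)$ unmollified (the $\gamma>0$ theory already handles $L^2$ data) and uses cruder $L^\infty H^{-3}$ bounds together with Ascoli and interpolation.
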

\begin{proof}
We consider first the case $\gamma>0$. In this case, the system \eqref{MP2D} looks like the Navier-Stokes equations, and the proof of the existence of solutions in the spaces mentioned in the Definition \ref{fesol}  follows with a straightforward adaptation of the proof for the Navier-Stokes equations. One can construct approximate solutions by mollification and pass to the limit by compactness arguments, see for example \cite[Chapter 17]{taylor_partial_1997}.

We consider now the case $\gamma=0$ which is more complicated due to the absence of diffusion in the equation of $h$. Let $\ep>0$. We solve the following system of PDEs:
\begin{equation}\label{approx}
\left\{
\begin{aligned}
  \partial_t \uep+\uep\cdot\nabla \uep+\nabla p_\ep&=(\mu+\chi)\Delta \uep-2\chi\nabla^\perp \hep\\
\dive \uep&=0\\
  \partial_t \hep+\uep\cdot\nabla \hep&=\ep\Delta \hep+2\chi\curl \uep-4\chi \hep
\end{aligned}
\right.
\end{equation}
with initial data $(\uep,\hep)\big|_{t=0}=(u_0,h_0)$. The solution $\zep:=(\uep,\hep)$ exists due to the case $\gamma>0$ and, thanks to Proposition \ref{prop-energ}, it satisfies the following energy equality:
\begin{multline}\label{enep}
\nl2{\uep(t)}^2+\nl2{\hep(t)}^2+2\mu\int_0^t\nl2{\nabla \uep}^2+2\ep\int_0^t\nl2{\nabla \hep}^2
+2\chi\int_0^t\nl2{\curl \uep-2\hep}^2\\
=\nl2{ u_0}^2+\nl2{ h_0}^2
\end{multline}
for all $t\geq0$. In particular, we have that $\nl2{\zep(t)}$ is bounded   independently of $t$. Since $\zep$ is bounded in $L^\infty(\R_+;L^2(\R^2))$, there exists a subsequence which converges weak$\ast$ in $L^\infty(\R_+;L^2(\R^2))$. In the sequel, we will extract several subsequences which, by abusing notation, will all be denoted by $\zep$, $\uep$ or $\hep$. Let $z=(u,h)$ be the limit of $\zep$:
\begin{equation}\label{weak-cv}
\zep\rightharpoonup z\quad\text{weak$\ast$ in }L^\infty(\R_+;L^2(\R^2)).
\end{equation}
Relation \eqref{enep} also implies that $\uep$ is bounded in $L^2_{loc}([0,\infty);H^1(\R^2))$ so we can extract a subsequence such that
\begin{equation*}
\uep\rightharpoonup u\quad\text{weakly in }L^2_{loc}([0,\infty);H^1(\R^2)).
\end{equation*}
In particular, $u\in L^2_{loc}([0,\infty);H^1(\R^2))$ and is divergence free. 

Next,  $\uep\otimes \uep$ is bounded in $L^\infty([0,\infty);L^1(\R^2))\hookrightarrow L^\infty([0,\infty);H^{-2}(\R^2))$ so $\uep\cdot\nabla \uep=\dive(\uep\otimes \uep)$  is bounded in $L^\infty([0,\infty);H^{-3}(\R^2))$. 
Then $\P(\uep\cdot\nabla \uep)$ is also bounded in $L^\infty([0,\infty);H^{-3}(\R^2))$. Similarly, the terms $\Delta \uep$ and  $\nabla^\perp \hep$ are bounded in $L^\infty([0,\infty);H^{-3}(\R^2))$. Applying the Leray projector $\P$ to the first equation in \eqref{approx} implies that 
\begin{equation*}
\partial_t \uep=-\P(\uep\cdot\nabla \uep)+(\mu+\chi)\Delta \uep-2\chi\nabla^\perp \hep \quad\text{is bounded in }L^\infty([0,\infty);H^{-3}(\R^2)).
\end{equation*}
Similarly,
\begin{equation*}
\partial_t \hep=-\uep\cdot\nabla \hep-\ep\Delta \hep+2\chi\curl \uep-4\chi \hep\quad\text{is bounded in }L^\infty([0,\infty);H^{-3}(\R^2)).
\end{equation*}
We infer that the $\zep=(\uep,\hep)$ are bounded and equicontinuous in $C^0([0,\infty);H^{-3}(\R^2))$. From the compact embedding $H^{-3}(\R^2)\hookrightarrow H^{-4}_{loc}(\R^2)$ and the Ascoli theorem we get that
\begin{equation*}
\zep\to z \quad\text{strongly in } C^0([0,\infty);H^{-4}_{loc}(\R^2)).
\end{equation*}
In particular $\zep\big|_{t=0}\to z\big|_{t=0}$ in $H^{-4}_{loc}(\R^2)$, so $u\big|_{t=0}=u_0$ and $h\big|_{t=0}=h_0$. Moreover, recalling that $\uep$ is bounded in $L^2_{loc}(\R_+;H^1(\R^2))$ we get by interpolation that 
\begin{equation}\label{strong-cv}
\uep\to u \quad\text{strongly in } L^2_{loc}(\R_+\times\R^2).
\end{equation}

We now pass to the limit in the first equation of \eqref{approx}. Due to \eqref{strong-cv}, we have that  $\uep\cdot\nabla \uep=\dive(\uep\otimes \uep)\to \dive(u\otimes u)=u\cdot\nabla u$ in the sense of the distributions. Clearly we also have that $\partial_t \uep\to 
\partial_t u$, $\Delta \uep\to\Delta u$ and $\nabla^\perp \hep\to \nabla^\perp h$ in the sense of the distributions. Recalling that the limit of a gradient in the sense of the distributions is another gradient, passing to the limit $\ep\to0$ in the first equation of \eqref{approx} yields the first equation of \eqref{MP2D}.

Now, \eqref{strong-cv} together with \eqref{weak-cv} allows us to pass to the limit in the term $\uep\hep$: we have that $\uep\hep\to uh$ in the sense of the distributions. We infer that $(\uep\cdot\nabla \hep)=\dive(\uep\hep)\to\dive(uh)=u\cdot\nabla h$ in the sense of the distributions. We also clearly have that $  \partial_t \hep\to  \partial_t h$, $\curl \uep\to \curl u$ and $\hep\to h$ in the sense of the distributions. Moreover, $\Delta\hep\to\Delta h$ in the sense of the distributions, so $\ep\Delta\hep\to0$ in the sense of the distributions. We conclude that passing to the limit in the last equation of \eqref{approx} implies the last equation of \eqref{MP2D}. This completes the proof that \eqref{MP2D} (with $\gamma=0$) is verified in the sense of the distributions. 

\medskip

We constructed a solution of \eqref{MP2D} such that
\begin{equation*}
u\in L^\infty(\R_+;L^2_\sigma(\R^2))\cap L^2_{loc}([0,\infty);H^1(\R^2))\cap C^0([0,\infty);H^{-4}_{loc}(\R^2))
\end{equation*}
and
\begin{equation*}
h\in L^\infty(\R_+;L^2(\R^2))\cap C^0([0,\infty);H^{-4}_{loc}(\R^2))
\end{equation*}
By density of compactly supported smooth functions in $L^2(\R^2)$, we have that
\begin{equation*}
L^\infty(\R_+;L^2(\R^2))\cap C^0([0,\infty);H^{-4}_{loc}(\R^2))\subset C^0_w([0,\infty);L^2(\R^2))
\end{equation*}
so
\begin{equation*}
u,h\in C^0_w([0,\infty);L^2(\R^2)).
\end{equation*}

We conclude that the solution $(u,h)$ has all the regularity required in Definition \ref{fesol}, so it is a finite energy solution. This completes the proof. 
\end{proof}

\section{A new enstrophy-like identity}
\label{sec:val-ide}

This section is devoted to the proof of a new enstrophy-like identity. The consequences of this new identity are quite important, for example, it will allow us to prove the decay of the $L^2$ norm of the solution in the case $\gamma=0$ without any additional condition on the material coefficients or on the initial data. We define the following quantity:
\begin{equation}\label{deftheta}
\theta(x,t)=h(x,t)-\frac{\chi+\mu}{2\chi}\om(x,t)
\end{equation}
where we recall that $\om=\curl u$.
We first state our new enstrophy identity under the form of a priori estimates in the case $\gamma=0$.
\begin{lemma}[new enstrophy identity]\label{lidentity}
Let $\gamma=0$ and $(u,h)$ be a sufficiently smooth solution of \eqref{MP2D}. We have the following identity:
\begin{equation}
 \label{identity}
\frac{\dd }{\dd t}\Bigl(\mu \|h(t)\|^2_{L^2}+\chi  \|\theta(t)\|^2_{L^2}\Bigr)
 +2\chi(\chi+\mu)\Big(\|(\om-2h)(t)\|^2_{L^2}
 + \|\nabla \theta(t)\|^2_{L^2}\Big)=0.
\end{equation}
\end{lemma}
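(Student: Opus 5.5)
The plan is to work entirely at the level of the vorticity formulation, use the transport structure, and then take a suitable linear combination of two $L^2$ energy identities. Since $(u,h)$ is assumed sufficiently smooth and decaying, all integrations by parts are legitimate. The transport terms $\int u\cdot\nabla f\, f$ vanish because $\dive u=0$.

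\emph{Step 1: equation for $\om$.} Take the curl of the first equation of \eqref{MP2D}. Since $\nabla^\perp h=(-\partial_2h,\partial_1h)$, we have $\curl\nabla^\perp h=\Delta h$, and the pressure disappears. This gives
\begin{equation*}
\partial_t\om+u\cdot\nabla\om=(\mu+\chi)\Delta\om-2\chi\Delta h=-2\chi\Delta\theta .
\end{equation*}

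\emph{Step 2: equation for $\theta$.} The $h$-equation with $\gamma=0$ reads $\partial_t h+u\cdot\nabla h=-2\chi(2h-\om)$. Subtracting $\frac{\chi+\mu}{2\chi}$ times the vorticity equation yields
\begin{equation*}
\partial_t\theta+u\cdot\nabla\theta=-2\chi(2h-\om)+(\mu+\chi)\Delta\theta .
\end{equation*}
This is the key observation. Although $h$ has no diffusion, the combination $\theta$ is parabolic, with diffusion coefficient $\mu+\chi$.

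\emph{Step 3: energy identities and combination.} Multiply the $h$-equation by $h$ and the $\theta$-equation by $\theta$, then integrate. This gives
\begin{equation*}
\frac12\frac{\dd}{\dd t}\|h\|_{L^2}^2=-2\chi\int(2h-\om)h,
\qquad
\frac12\frac{\dd}{\dd t}\|\theta\|_{L^2}^2+(\mu+\chi)\|\nabla\theta\|_{L^2}^2=-2\chi\int(2h-\om)\theta .
\end{equation*}
Multiply the first identity by $2\mu$ and the second by $2\chi$, and add. The cross terms combine into $-4\chi\int(2h-\om)(\mu h+\chi\theta)$. The algebraic identity
\begin{equation*}
\mu h+\chi\theta=(\mu+\chi)h-\tfrac{\mu+\chi}{2}\om=\tfrac{\mu+\chi}{2}(2h-\om)
\end{equation*}
turns this into $-2\chi(\mu+\chi)\|\om-2h\|_{L^2}^2$, which gives \eqref{identity}.

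The only real point is choosing the weights $\mu$ and $\chi$ so that the indefinite cross terms collapse into a perfect square. Everything else is routine for smooth solutions. The genuine difficulty, justifying this identity for finite energy solutions, is not part of this lemma.
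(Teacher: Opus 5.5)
Your proof is correct and follows the paper's argument step for step. You take the curl to get the $\om$-equation, form the parabolic equation for $\theta$, weight the $L^2$ energy identities for $h$ and $\theta$ by $\mu$ and $\chi$, and collapse the cross terms into $-\chi(\chi+\mu)\|\om-2h\|_{L^2}^2$. Your explicit identity $\mu h+\chi\theta=\frac{\mu+\chi}{2}(2h-\om)$ is exactly the ``one can check'' step that the paper leaves to the reader.
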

\begin{proof}
Taking the $\curl$ of the first equation in \eqref{MP2D} implies the following equation for $\omega$:
\begin{equation*}
\partial_t\omega + u\cdot\nabla\omega=(\mu+\chi)\Delta\omega-2\chi\Delta h=-2\chi\Delta \theta.
\end{equation*}
Multiply the above relation by $\frac{\chi+\mu}{2\chi}$ and subtract it from the equation for $h$ given in \eqref{MP2D}. We obtain the following PDE for the quantity $\theta$:
\begin{equation}\label{eq-theta}
\partial_t \theta + u\cdot\nabla \theta=2\chi\om-4\chi h+(\chi+\mu)\Delta \theta.
\end{equation}
Let us multiply the above relation by $\theta$ and integrate in space. We get, after some integrations by parts,
\begin{align*}
\frac{1}{2}\frac{\dd }{\dd t}\|\theta\|_{L^2}^2
=\int \big(2\chi\om-4\chi h+(\chi+\mu)\Delta \theta\big)\theta
=2\chi\int\om \theta-4\chi\int h\theta-(\chi+\mu)\int|\nabla \theta|^2.
\end{align*}
Multiplying the equation for $h$ given in \eqref{MP2D} by $h$ and integrating in space yields
\begin{equation*}
\frac{1}{2}\frac{\dd }{\dd t}\|h\|_{L^2}^2
=2\chi\int\om h-4\chi\int|h|^2
\end{equation*}
Now we multiply the above relation by $\mu$, the penultimate relation by $\chi$ and add. We get
\begin{equation*}
\frac{1}{2}\frac{\dd }{\dd t}\big( \mu\|h\|_{L^2}^2+\chi \|\theta\|_{L^2}^2\big)
=2\mu\chi\int\om h-4\mu\chi\int|h|^2
+2\chi^2\int\om \theta-4\chi^2\int h\theta-\chi(\chi+\mu)\int|\nabla \theta|^2.
\end{equation*}
Given the definition of $\theta$ given in \eqref{deftheta}, one can check that
\begin{equation*}
2\mu\chi\int\om h-4\mu\chi\int|h|^2+2\chi^2\int\om \theta-4\chi^2\int h\theta=-\chi(\chi+\mu)\int |\om-2h|^2.
\end{equation*}
The conclusion follows. 
\end{proof}

Due to the lack of regularity of finite energy solutions, we cannot prove rigorously this new identity in the case $\gamma=0$ if we assume $u_0$ and $h_0$ to be square-integrable only. Indeed, if $\gamma=0$ there is no smoothing effect in the equation of $h$, even for the linear version of \eqref{MP2D}, as can be observed from the explicit expressions that we will find in Section \ref{sect-linear}. So $h$ is not better than square-integrable in space. But \eqref{identity} involves the $H^1$ norm of $\theta$ which in turn involves the $H^1$ norm of $h$. Nevertheless, as will be clear from what follows, we are merely interested in showing the monotonicity of the following function
\begin{equation*}
\psi(t):=\mu \|h(t)\|^2_{L^2}+\chi  \|\theta(t)\|^2_{L^2}.
\end{equation*}
This function is defined almost everywhere because $\omega\in L^2(\R_+\times\R^2)$. More precisely, we have the following lemma.
\begin{lemma}\label{psil1}
Assume that $(u,h)$ is a finite energy solution of \eqref{MP2D} in the sense of Definition \ref{fesol}. 
We have that $\psi\in L^1(\R_+)$.
\end{lemma}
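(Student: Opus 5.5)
We must show $\psi(t)=\mu\|h(t)\|_{L^2}^2+\chi\|\theta(t)\|_{L^2}^2$ is integrable on $\R_+$. Expanding $\theta=h-\frac{\chi+\mu}{2\chi}\om$ and using $(a+b)^2\le 2a^2+2b^2$, we get
\begin{equation*}
\psi(t)\le (\mu+2\chi)\|h(t)\|_{L^2}^2+\frac{(\chi+\mu)^2}{2\chi}\|\om(t)\|_{L^2}^2 .
\end{equation*}
So it suffices to prove $\om\in L^2(\R_+\times\R^2)$ and $h\in L^2(\R_+\times\R^2)$.

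The first bound comes directly from the energy equality \eqref{enereq} of Proposition~\ref{prop-energ}, taken with $s=0$ and $t\to\infty$. It gives
\begin{equation*}
2\mu\int_0^\infty\|\nabla u\|_{L^2}^2\le\|u_0\|_{L^2}^2+\|h_0\|_{L^2}^2 ,
\end{equation*}
and $\|\om\|_{L^2}=\|\nabla u\|_{L^2}$.

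For $h$, the same energy equality also controls
\begin{equation*}
2\chi\int_0^\infty\|\om-2h\|_{L^2}^2\le\|u_0\|_{L^2}^2+\|h_0\|_{L^2}^2 .
\end{equation*}
Writing $2h=\om-(\om-2h)$ then yields
\begin{equation*}
4\int_0^\infty\|h\|_{L^2}^2\le 2\int_0^\infty\|\om\|_{L^2}^2+2\int_0^\infty\|\om-2h\|_{L^2}^2<\infty .
\end{equation*}
This is the one slightly non-obvious step. When $\gamma=0$ there is no $L^2_tH^1_x$ bound on $h$, so the integrability of $h$ in time has to come from the coupling term $\om-2h$ together with the dissipation of $u$.

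Combining the two bounds gives $\psi\in L^1(\R_+)$, with
\begin{equation*}
\|\psi\|_{L^1(\R_+)}\le C(\mu,\chi)\big(\|u_0\|_{L^2}^2+\|h_0\|_{L^2}^2\big).
\end{equation*}
Measurability of $\psi$ is not an issue: $h\in C^0([0,\infty);L^2)$, and $\om\in L^2(\R_+\times\R^2)$, so $t\mapsto\|\om(t)\|_{L^2}$ is measurable and finite almost everywhere. Since only \eqref{enereq} is used, the argument holds for every $\gamma\ge0$, and no real obstacle is expected.
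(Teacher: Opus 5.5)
Your proof is correct and follows the paper's reduction to $\omega,h\in L^2(\R_+\times\R^2)$, with $\omega$ controlled by the dissipation of $u$. The only difference is how you get $h\in L^2(\R_+\times\R^2)$. You use the $2\chi\|\omega-2h\|_{L^2}^2$ term in the total energy equality \eqref{enereq} and write $2h=\omega-(\omega-2h)$, whereas the paper uses the $h$-balance \eqref{enerh} alone and absorbs the cross term $4\chi\int\omega h$ by Young's inequality; your version has the small advantage of applying verbatim for every $\gamma\ge0$.
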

\begin{proof}
Since $u\in L^2(\R_+;\dot H^1(\R^2))$ we have that $\omega\in L^2(\R_+\times\R^2)$. It remains to prove that $h\in L^2(\R_+\times\R^2)$. We observed during the proof of Proposition \ref{prop-energ} that $h$ satisfies the energy equality stated in relation \eqref{enerh}. Setting $s=0$ in  \eqref{enerh} yields, for all $t\geq0$,
\begin{align*}
\nl2{h(t)}^2+8\chi\int_0^t \nl2{h}^2
&=\nl2{h_0}^2+4\chi\int_0^t\int_{\R^2}\om h\\
&\leq\nl2{h_0}^2+4\chi\int_0^t\int_{\R^2}\nl2\om\nl2h\\
&\leq\nl2{h_0}^2+4\chi\int_0^t\nl2h^2+\chi\int_0^t\nl2\om^2.
\end{align*}
So
\begin{equation*}
4\chi\int_0^t \nl2{h}^2\leq\nl2{h_0}^2+\chi\int_0^t\nl2\om^2
\end{equation*}
which implies that $h\in L^2(\R_+\times\R^2)$ using once more that $\omega\in L^2(\R_+\times\R^2)$. This completes the proof.
\end{proof}

We can now show the following consequence on $\psi$ of the new enstrophy identity.
\begin{proposition}\label{prop-deriv}
Let $\gamma=0$ and $(u_0,h_0)\in L^2_\sigma(\R^2)\times L^2(\R^2)$. Let $(u,h)$ be a finite energy solution of \eqref{MP2D} (in the sense of Definition \ref{fesol}) with initial data $(u_0,h_0)$. Then $\psi'\leq0$ in $\D'\big((0,\infty)\big)$.
\end{proposition}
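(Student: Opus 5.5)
We will mimic, at the level of $\theta$, the renormalization argument used for $h$ in the proof of Proposition \ref{prop-energ}. The energy equality \eqref{enerh} already controls $\mu\|h\|^2_{L^2}$ rigorously. The task is therefore to justify the $\theta$-part of \eqref{identity}, at least as an inequality. The starting point is equation \eqref{eq-theta},
\[
\partial_t\theta+u\cdot\nabla\theta=(\chi+\mu)\Delta\theta+g,\qquad g:=2\chi\om-4\chi h,
\]
which holds in $\D'$ for any finite energy solution, since it is a linear combination of the $h$-equation and the curl of the $u$-equation. Because $g\in L^2_{loc}([0,\infty);L^2)$, $\theta$ solves a heat equation with transport. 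Note, however, that we do not know a priori that $\nabla\theta\in L^2$; this is exactly the obstruction.

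First, we mollify in space and set $\tde=\jde\theta$. Then
\[
\partial_t\tde+u\cdot\nabla\tde=(\chi+\mu)\Delta\tde+\jde g-R_\delta,\qquad R_\delta=\jde(u\cdot\nabla\theta)-u\cdot\nabla\tde .
\]
Here $\theta\in L^2_{loc}(L^2)$ (by Lemma \ref{psil1}) and $u\in L^2_{loc}(H^1)\cap L^\infty(L^2)$. Lemma \ref{lions} with $a=2$, $p=2$, $b=2$, $q=2$ gives $R_\delta\to0$ in $L^1_{loc}(L^1)$ only. This is too weak to multiply by $\tde$, so we renormalize with the bounded function $f_\sigma$ from \eqref{fsig}.

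Next, we test against $\varphi(t)f_\sigma(\tde)$ with $\varphi\in C^\infty_c((0,\infty))$, $\varphi\ge0$. The time derivative gives $-\int\varphi'\int F_\sigma(\tde)$, and the transport term gives $0$ since $\dive u=0$. The diffusion gives
\[
-(\chi+\mu)\iint\varphi\, f_\sigma'(\tde)|\nabla\tde|^2\le0 ,
\]
which has a good sign and can simply be dropped (or kept and handled with Fatou). The commutator term is bounded by $\sigma^{-1/2}\|\varphi R_\delta\|_{L^1}\to0$. The forcing term converges to $\iint\varphi\, g f_\sigma(\theta)$ because $f_\sigma$ is 1-Lipschitz. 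Hence we get
\[
-\int\varphi'\!\int F_\sigma(\theta)\le\iint\varphi\, gf_\sigma(\theta).
\]
Letting $\sigma\to0$ by dominated convergence (as in Proposition \ref{prop-energ}, using $\theta,g\in L^2_{loc}L^2$) yields, in $\D'((0,\infty))$,
\[
\tfrac12\tfrac{\dd}{\dd t}\|\theta\|^2_{L^2}\le\int g\theta .
\]

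Finally, we combine this with $\tfrac12\tfrac{\dd}{\dd t}\|h\|_{L^2}^2=\int(2\chi\om-4\chi h)h$, which follows from \eqref{enerh}. Multiplying by $\chi$ and $\mu$ respectively and adding, the algebraic identity from the proof of Lemma \ref{lidentity} gives
\[
\tfrac12\psi'\le-\chi(\chi+\mu)\|\om-2h\|^2_{L^2}\le0
\]
in $\D'$, since $\varphi\ge0$ is arbitrary.

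The main obstacle is the commutator step. With only $L^1$ convergence of $R_\delta$, one must avoid ever needing $\nabla\theta\in L^2$, and one must make sure the diffusion term is handled only through its sign. In particular, the mollified Laplacian term must be integrated by parts before any limit is taken, which is legitimate for the smooth $\tde$. A possible subtlety is the spatial integrability of $F_\sigma(\tde)$ and the vanishing of boundary terms at infinity; these are fine because $\tde(t)\in H^\infty$ for a.e.\ $t$.
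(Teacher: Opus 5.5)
Your proposal is correct and follows essentially the same route as the paper. You mollify the $\theta$-equation \eqref{eq-theta}, test it against $f_\sigma(\theta_\delta)\varphi(t)$, kill the commutator with Lemma~\ref{lions} and the bound $|f_\sigma|\le\sigma^{-1/2}$, drop the diffusion term by its sign, let $\delta\to0$ and then $\sigma\to0$, and combine with the energy equality for $h$. The one step you leave implicit is the limit $\delta\to0$ in $\int\varphi'\int F_\sigma(\theta_\delta)$; the paper justifies it using $|F_\sigma(s_1)-F_\sigma(s_2)|\le|s_1^2-s_2^2|$ together with $\theta_\delta\to\theta$ in $L^2$.
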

\begin{proof}
We proceed as in the proof of Proposition \ref{prop-energ}. Let $\varphi\in C^\infty_0\big((0,\infty);\R_+\big)$ be a nonnegative compactly supported smooth function. Let $\jde$ be a Friedrichs mollifier,  $\hde=\jde h$, $\omde=\jde\omega$ and $\tde=\jde\theta$. We use the functions $f_\sigma$ and $F_\sigma$ introduced in \eqref{fsig} and \eqref{Fsig}.	

Since $\om,\theta,h\in L^2(\R_+\times\R^2)$, we know from the properties of the mollifications that
\begin{equation}\label{othconv}
\omde\to\omega,\quad \tde\to\theta,\quad \hde\to h\quad \text{strongly in } L^2(\R_+\times\R^2).
\end{equation}

We apply $\jde$ to the equation of $\theta$ given in \eqref{eq-theta}, multiply by $f_\sigma(\tde)\varphi(t)$ and integrate in space and time. We get
\begin{align}
\int_0^\infty\int_{\R^2}\partial_t \tde f_\sigma(\tde)&\varphi(t)\dt\dx
+\int_0^\infty\int_{\R^2}\jde(u\cdot\nabla \theta)f_\sigma(\tde)\varphi(t)\dt\dx\notag\\
&=2\chi \int_0^\infty\int_{\R^2}\omde f_\sigma(\tde)\varphi(t)\dt\dx
-4\chi \int_0^\infty\int_{\R^2}\hde f_\sigma(\tde)\varphi(t)\dt\dx\label{thetadelta}\\
&\hskip 4cm +(\chi+\mu)\int_0^\infty\int_{\R^2}\Delta\tde f_\sigma(\tde)\varphi(t)\dt\dx.\notag
\end{align}

We wish to pass to the limit $\delta\to0$ above. We will deal with all the terms, one by one.

First, we write 
\begin{equation*}
\int_0^\infty\int_{\R^2}\partial_t \tde f_\sigma(\tde)\varphi(t)\dt\dx
=\int_0^\infty\int_{\R^2}\partial_t\big(F_\sigma(\tde)\big)\varphi(t)\dt\dx
=-\int_0^\infty\varphi'(t)\int_{\R^2}F_\sigma(\tde)\dx\dt
\end{equation*}
Recall that $\tde\to\theta$ strongly in $L^2(\R_+\times\R^2)$ and that  $|F_\sigma(s_1)-F_\sigma(s_2)|\leq |s_1^2-s_2^2|$. Then
\begin{multline*}
\Big|\int_0^\infty\varphi'(t)\int_{\R^2}F_\sigma(\tde)\dx\dt-\int_0^\infty\varphi'(t)\int_{\R^2}F_\sigma(\theta)\dx\dt\Big|\leq \int_0^\infty|\varphi'(t)|\int_{\R^2}|F_\sigma(\tde)-F_\sigma(\theta)|\dx\dt\\
\leq \int_0^\infty \int_{\R^2} |\varphi'(t)|\,|\tde^2-\theta^2|\dt\dx
\leq  \int_0^\infty |\varphi'(t)|\,\nl2{\tde-\theta}\nl2{\tde+\theta}\dt
\stackrel{\delta\to0}\longrightarrow0
\end{multline*}
so
\begin{equation*}
\int_0^\infty\int_{\R^2}\partial_t \tde f_\sigma(\tde)\varphi(t)\dt\dx\stackrel{\delta\to0}\longrightarrow
-\int_0^\infty\varphi'(t)\int_{\R^2}F_\sigma(\theta)\dx\dt
\end{equation*}

Next, we write
\begin{align*}
\int_0^\infty\int_{\R^2}\jde(u\cdot\nabla \theta)&f_\sigma(\tde)\varphi(t)\dt\dx\\
&=\int_0^\infty\int_{\R^2}u\cdot\nabla \tde f_\sigma(\tde)\varphi(t)\dt\dx
+\int_0^\infty\int_{\R^2}[\jde,u\cdot\nabla]\theta\,f_\sigma(\tde)\varphi(t)\dt\dx\\
&=\int_0^\infty\int_{\R^2}[\jde,u\cdot\nabla]\theta\,f_\sigma(\tde)\varphi(t)\dt\dx.
\end{align*}

Recalling that $u\in L^2_{loc}\big([0,\infty); H^1(\R^2)\big)$ and $\theta\in L^2\big(\R_+\times\R^2\big)$, we know from Lemma \ref{lions} that
\begin{equation*}
[\jde,u\cdot\nabla]\theta\stackrel{\delta\to0}\longrightarrow0\quad\text{strongly in }L^{1}_{loc}\big([0,\infty);L^1(\R^2)\big).
\end{equation*}
Since $f_\sigma(\tde)$ is bounded uniformly with respect to $\delta$, $|f_\sigma(\tde)|\leq \sigma^{-\frac12}$, we infer that
\begin{equation*}
\int_0^\infty\int_{\R^2}\jde(u\cdot\nabla \theta)f_\sigma(\tde)\varphi(t)\dt\dx
=\int_0^\infty\int_{\R^2}[\jde,u\cdot\nabla]\theta\,f_\sigma(\tde)\varphi(t)\dt\dx
\stackrel{\delta\to0}\longrightarrow0.
\end{equation*}

Since $\tde\to\theta$ strongly in $L^2(\R_+\times\R^2)$ and the function $f_\sigma$ is 1-Lipschitz, we have that $f_\sigma(\tde)\to f_\sigma(\theta)$ strongly in $L^2(\R_+\times\R^2)$. Recalling \eqref{othconv}, we immediately see that
\begin{equation*}
\int_0^\infty\int_{\R^2}\omde f_\sigma(\tde)\varphi(t)\dt\dx
\stackrel{\delta\to0}\longrightarrow
\int_0^\infty\int_{\R^2}\omega f_\sigma(\theta)\varphi(t)\dt\dx
\end{equation*}
and
\begin{equation*}
\int_0^\infty\int_{\R^2}\hde f_\sigma(\tde)\varphi(t)\dt\dx
\stackrel{\delta\to0}\longrightarrow
\int_0^\infty\int_{\R^2}h f_\sigma(\theta)\varphi(t)\dt\dx.
\end{equation*}

We are not able to pass to the limit $\delta\to0$ in the last term in \eqref{thetadelta}, but we can show that it has a good sign:
\begin{multline}\label{example}
\int_0^\infty\int_{\R^2}\Delta\tde f_\sigma(\tde)\varphi(t)\dt\dx
=-\int_0^\infty\int_{\R^2}\nabla\tde \nabla\big(f_\sigma(\tde)\big)\varphi(t)\dt\dx\\
=-\int_0^\infty\int_{\R^2}|\nabla\tde|^2f'_\sigma(\tde)\varphi(t)\dt\dx\leq0.
\end{multline}

We can now go back to \eqref{thetadelta} and take the $\limsup\limits_{\delta\to0}$. We get
\begin{equation*}
-\int_0^\infty\varphi'(t)\int_{\R^2}F_\sigma(\theta)\dx\dt
\leq 2\chi \int_0^\infty\int_{\R^2}\omega f_\sigma(\theta)\varphi(t)\dt\dx
-4\chi \int_0^\infty\int_{\R^2}h f_\sigma(\theta)\varphi(t)\dt\dx.
\end{equation*}

Now, we let $\sigma\to0$. Since $|F_\sigma(s)|\leq s^2$, $|f_\sigma(s)\leq |s|$, $\lim_{\sigma\to0}F_\sigma(s)=\frac{s^2}2$ and $\lim_{\sigma\to0}f_\sigma(s)=s$, we can pass to the limit above with the dominated convergence theorem. We obtain that
\begin{equation*}
-\frac12\int_0^\infty\varphi'(t)\int_{\R^2}|\theta|^2\dx\dt
\leq 2\chi \int_0^\infty\int_{\R^2}\omega \theta\varphi(t)\dt\dx
-4\chi \int_0^\infty\int_{\R^2}h \theta\varphi(t)\dt\dx.
\end{equation*}

One can justify similarly that we can multiply the equation of $h$ by $h\varphi(t)$ and integrate in space and time. We actually already did that in the proof of Proposition \ref{prop-energ}. We get
\begin{equation*}
-\int_0^\infty\varphi'(t)\int_{\R^2}|h|^2\dx\dt
=4\chi \int_0^\infty\int_{\R^2}\omega h\varphi(t)\dt\dx
-8\chi \int_0^\infty\int_{\R^2}|h|^2\varphi(t)\dt\dx.
\end{equation*}
Multiplying the above relation by $\mu$, the penultimate relation by $2\chi$ and adding implies, after some calculations using \eqref{deftheta}, that
\begin{equation*}
-\int_0^\infty\varphi'(t)\psi(t)\dt\leq -2\chi(\chi+\mu)\int_0^\infty\int_{\R^2}|\om-2h|^2\varphi(t)\dt\dx\leq0.
\end{equation*}
This means that $\psi'\leq0$ in $\D'\big((0,\infty)\big)$ and completes the proof.
\end{proof}

\begin{corollary}\label{corgamma0}
Let $\gamma=0$ and $(u_0,h_0)\in L^2_\sigma(\R^2)\times L^2(\R^2)$. Let $(u,h)$ be a finite energy solution of \eqref{MP2D} (in the sense of Definition \ref{fesol}) with initial data $(u_0,h_0)$. Then
\begin{equation*}
\lim_{t\to\infty}\sqrt t\nl2{h(t)}=0
\end{equation*}
and there exists a negligible set $A\subset\R_+$ such that
\begin{equation*}
\lim_{\substack{t\to\infty\\ t\not\in A}}\sqrt t\nl2{\nabla u(t)}=0.
\end{equation*}
\end{corollary}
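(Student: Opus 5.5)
The plan is to apply Zingano's monotonicity argument to the function $\psi$. By Lemma~\ref{psil1} we have $\psi\in L^1(\R_+)$, and by Proposition~\ref{prop-deriv} we have $\psi'\le0$ in $\D'((0,\infty))$. A distribution on $(0,\infty)$ with nonpositive derivative is an $L^1_{loc}$ function that coincides almost everywhere with a nonincreasing function. So there is a negligible set $A_0$ and a nonincreasing function $\tilde\psi$ such that $\psi=\tilde\psi$ on $\R_+\setminus A_0$.

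Since $\tilde\psi\ge0$ is nonincreasing and integrable, the standard estimate gives
\begin{equation*}
\frac t2\,\tilde\psi(t)\le\int_{t/2}^t\tilde\psi(s)\dd s\le\int_{t/2}^\infty\psi(s)\dd s\stackrel{t\to\infty}\longrightarrow0 .
\end{equation*}
Hence $t\psi(t)\to0$ as $t\to\infty$ with $t\notin A_0$. Since $\psi\ge\mu\nl2{h}^2$, this gives $t\nl2{h(t)}^2\to0$ off $A_0$. Moreover
\begin{equation*}
\nl2{\om}\le\frac{2\chi}{\chi+\mu}\bigl(\nl2{h}+\nl2{\theta}\bigr)\lesssim\sqrt{\psi},
\end{equation*}
so $t\nl2{\om(t)}^2\to0$ off $A_0$. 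Because $\nl2{\nabla u}=\nl2{\om}$ almost everywhere, we may take $A=A_0$, enlarged by the null set where $\om(t)\ne\curl u(t)$ or where $\nabla u(t)\notin L^2$. This proves the second assertion.

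For $h$ the limit must hold for every $t$, with no exceptional set. Here I will use the strong continuity $h\in C^0([0,\infty);L^2)$ from Proposition~\ref{prop-energ}, together with the energy equality \eqref{enerh} on intervals $[s,t]$. I expect this step to be the only real obstacle. Given $t$ large, I pick $s\in[t/2,t]\setminus A_0$; for instance $s$ can be chosen arbitrarily close to $t$ from below. Then \eqref{enerh} gives
\begin{equation*}
\nl2{h(t)}^2-\nl2{h(s)}^2=4\chi\int_s^t\!\!\int\om h-8\chi\int_s^t\nl2h^2\le\chi\int_s^t\nl2\om^2 ,
\end{equation*}
using $4\om h\le\om^2+4h^2$.

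On $[s,t]\setminus A_0$ we have $\nl2{\om}^2\le C\tilde\psi(\tau)\le C\tilde\psi(s)$. Therefore
\begin{equation*}
t\nl2{h(t)}^2\le t\nl2{h(s)}^2+C\,t(t-s)\tilde\psi(s)\le 2s\nl2{h(s)}^2+C\,t\,\tilde\psi(t/2),
\end{equation*}
where the last step uses $t-s\le t/2$ and monotonicity. Both terms tend to zero, since $s\ge t/2$ with $s\notin A_0$ and $(t/2)\tilde\psi(t/2)\to0$. Alternatively, letting $s\to t^-$ along $s\notin A_0$ and using continuity of $\nl2{h(\cdot)}$ directly gives $t\nl2{h(t)}^2\le\limsup s\nl2{h(s)}^2$ along non-exceptional $s$. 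Either route gives $\sqrt t\,\nl2{h(t)}\to0$ for all $t\to\infty$.
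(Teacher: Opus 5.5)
Your proof is correct and is essentially the paper's. You replace $\psi$ by its nonincreasing representative and use $\frac t2\tilde\psi(t)\le\int_{t/2}^t\psi\to0$. You then bound $\|\nabla u\|_{L^2}=\|\omega\|_{L^2}\lesssim\sqrt{\psi}$ off a null set, and remove the exceptional set for $h$ using the continuity of $t\mapsto\|h(t)\|_{L^2}$ together with the density of $\R_+\setminus A_0$. That last step is your ``alternative'' route, which is exactly what the paper does; it needs no energy equality at all.

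Your first route contains a slip. From $t-s\le t/2$ you only get $C\,t(t-s)\tilde\psi(s)\le\frac C2\,t^2\tilde\psi(t/2)$. This need not tend to zero, since you only know $\tilde\psi(t/2)=o(1/t)$. The bound $C\,t\,\tilde\psi(t/2)$ you wrote requires $t-s\le1$, which your freedom to choose $s$ close to $t$ does provide.
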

\begin{proof}
Proposition \ref{prop-deriv} states that $\psi'\leq0$ in $\D'\big((0,\infty)\big)$. Then there exists a monotonically decreasing function $G$ defined on $\R_+$ such that $\psi=G$ almost everywhere. Let $A$ be negligible such that $\psi=G$ on $\R_+\setminus A$. From Lemma \ref{psil1} we infer that $G\in L^1(\R_+)$. We bound
\begin{equation*}
    t\,G(t) = 2\int_{t/2}^t G(t) \dd s \leq 2\int_{t/2}^t G(s) \dd s
    \to 0 \quad {\text as\,\,} t \to \infty.
\end{equation*}
We infer that
\begin{equation*}
\lim_{\substack{t\to\infty\\ t\not\in A}}t\psi(t)=0.
\end{equation*}
So
\begin{equation*}
\lim_{\substack{t\to\infty\\ t\not\in A}}\sqrt t\nl2{h(t)}=0
\quad\text{and}\quad
\lim_{\substack{t\to\infty\\ t\not\in A}}\sqrt t\nl2{\theta(t)}=0.
\end{equation*}
The function $t\mapsto \nl2{h(t)}$ being continuous, we can drop $t\not\in A$ in the first limit above which proves the first part of the corollary. Recalling that
\begin{equation*}
\nl2{\nabla u}=\nl2\omega=\nl2{\frac{2\chi}{\chi+\mu}(\theta-h)}
\leq \frac{2\chi}{\chi+\mu}(\nl2\theta+\nl2h)
\end{equation*}
completes the proof of the corollary.
\end{proof}

We consider now the case $\gamma>0$ which is much simpler and the enstrophy identity can be proved rigorously. Let us introduce the following quantity (see \cite{Cesar-RG-Zingano-CP2023} for the physical meaning of this quantity) 
\begin{equation*}
\epsilon :=h-\frac12\omega.
\end{equation*}

We have the following result.
\begin{proposition}\label{pident}
Let $\gamma>0$ and $(u_0,h_0)\in L^2_\sigma(\R^2)\times L^2(\R^2)$. Then $u\in C^0((0,\infty);H^1(\R^2))\cap L^2_{loc}((0,\infty);H^2(\R^2))$ and the following enstrophy identity holds true:
\begin{equation}
 \label{enEO22}
\frac12\frac{\dd }{\dd t}\Bigl(\|\epsilon\|_{L^2}^2+a\|\omega\|_{L^2}^2\Bigr)
 +4\chi\|\epsilon\|_{L^2}^2
 +(\gamma+\chi)\big\|\nabla\epsilon-\frac{\mu}{2\chi}\nabla\omega\big\|_{L^2}^2
 +\frac{\mu\gamma(\mu+\chi)}{4\chi^2}\|\nabla\omega\|_{L^2}^2=0
\end{equation}
where 
\begin{equation}
\label{choice-a}
 a:=\frac{\gamma\chi+\mu\chi+2\mu\gamma}{4\chi^2}.
\end{equation}
In addition,
\begin{equation}\label{smallgammapositive}
\lim_{t\to\infty}\sqrt t\nl2{h(t)}=0
\quad\text{and}\quad
\lim_{t\to\infty}\sqrt t\nl2{\nabla u(t)}=0.
\end{equation}
\end{proposition}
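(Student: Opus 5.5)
Three things have to be established: the instantaneous regularity of $u$, the identity \eqref{enEO22}, and the limits \eqref{smallgammapositive}. The plan is to prove the regularity first, then obtain \eqref{enEO22} by a formal computation that the regularity makes rigorous, and finally apply the same monotonicity argument used in Corollary~\ref{corgamma0}.

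\emph{Regularity.} Since $\gamma>0$, the system \eqref{MP2D} is a parabolic system of Navier--Stokes type with a lower-order linear coupling, and we argue as for 2D Navier--Stokes. By the energy equality \eqref{enereq}, for almost every $t_0>0$ we have $u(t_0),h(t_0)\in H^1$. Starting from such a $t_0$, we run the classical $H^1$ energy estimates: multiply the $u$ equation by $-\Delta u$ and the $h$ equation by $-\Delta h$, and control the nonlinear terms in 2D with Ladyzhenskaya's inequality. The coupling terms $\nabla^\perp h$ and $\curl u$ are lower order and are absorbed by the dissipation. This gives a strong solution in $C^0([t_0,\infty);H^1)\cap L^2_{loc}([t_0,\infty);H^2)$. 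By weak--strong uniqueness in the class of Definition~\ref{fesol} (a standard Gronwall argument in $L^2$, again using Ladyzhenskaya), this strong solution coincides with $(u,h)$. Since $t_0$ can be taken arbitrarily small, the stated regularity of $u$ follows, and likewise $h\in C^0((0,\infty);H^1)\cap L^2_{loc}((0,\infty);H^2)$.

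\emph{Identity.} We use the equations for $\omega$ and $\epsilon$:
\[
\partial_t\omega+u\cdot\nabla\omega=(\mu+\chi)\Delta\omega-2\chi\Delta h,
\qquad
\partial_t\epsilon+u\cdot\nabla\epsilon=\gamma\Delta h-\tfrac{\mu+\chi}{2}\Delta\omega+\chi\Delta h-4\chi\epsilon .
\]
Writing $h=\epsilon+\frac12\omega$, the right-hand side of the $\epsilon$ equation becomes $(\gamma+\chi)\Delta\epsilon+\frac{\gamma-\mu}{2}\Delta\omega-4\chi\epsilon$, and the $\omega$ equation becomes $\partial_t\omega+u\cdot\nabla\omega=\mu\Delta\omega-2\chi\Delta\epsilon$. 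We multiply the first by $a\,\omega$ and the second by $\epsilon$, integrate in space, and note that the transport terms vanish because $\dive u=0$. This yields
\[
\tfrac12\tfrac{\dd}{\dd t}\bigl(\|\epsilon\|^2_{L^2}+a\|\omega\|^2_{L^2}\bigr)+4\chi\|\epsilon\|^2_{L^2}+(\gamma+\chi)\|\nabla\epsilon\|^2_{L^2}+a\mu\|\nabla\omega\|^2_{L^2}+\Bigl(\tfrac{\gamma-\mu}{2}-2\chi a\Bigr)\int\nabla\epsilon\cdot\nabla\omega=0 .
\]
With $a$ given by \eqref{choice-a}, a direct computation gives $\frac{\gamma-\mu}{2}-2\chi a=-\frac{\mu(\gamma+\chi)}{\chi}$. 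Completing the square then gives
\[
(\gamma+\chi)\Bigl\|\nabla\epsilon-\tfrac{\mu}{2\chi}\nabla\omega\Bigr\|^2_{L^2}+\Bigl(a\mu-\tfrac{\mu^2(\gamma+\chi)}{4\chi^2}\Bigr)\|\nabla\omega\|^2_{L^2},
\]
and the coefficient $a\mu-\frac{\mu^2(\gamma+\chi)}{4\chi^2}$ simplifies to $\frac{\mu\gamma(\mu+\chi)}{4\chi^2}$, which is \eqref{enEO22}. These multiplications are justified on $(0,\infty)$: $\omega,\epsilon\in L^2_{loc}(H^1)$ and $\partial_t\omega,\partial_t\epsilon\in L^2_{loc}(H^{-1})$, so we can use the Lions--Magenes argument already employed in Proposition~\ref{prop-energ}. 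The only term needing care is the convective one, $u\cdot\nabla\omega\in L^2_{loc}(H^{-1})$, which holds since $u\in L^\infty_{loc}(H^1)\subset L^\infty_{loc}(L^4)$ and $\nabla\omega\in L^2_{loc}(L^2)$ puts $u\cdot\nabla\omega$ in $L^2_{loc}(L^{4/3})$.

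\emph{Decay.} Set $\Psi=\|\epsilon\|^2_{L^2}+a\|\omega\|^2_{L^2}$. By \eqref{enEO22}, $\Psi$ is continuous and nonincreasing on $(0,\infty)$. It is integrable on $\R_+$: $\omega\in L^2(\R_+\times\R^2)$ by the energy equality, and $h\in L^2(\R_+\times\R^2)$ as in Lemma~\ref{psil1}, or directly because $\|\omega-2h\|^2_{L^2}$ and $\|\omega\|^2_{L^2}$ are integrable in time. The estimate $t\Psi(t)\le2\int_{t/2}^t\Psi\to0$ from the proof of Corollary~\ref{corgamma0} then gives $\sqrt t\,\|\omega\|_{L^2}\to0$ and $\sqrt t\,\|\epsilon\|_{L^2}\to0$. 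Since $h=\epsilon+\frac12\omega$ and $\|\nabla u\|_{L^2}=\|\omega\|_{L^2}$, this gives \eqref{smallgammapositive}. Because $\Psi$ is continuous, no exceptional set appears.

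The main obstacle is the rigorous regularity step, in particular the weak--strong uniqueness argument that identifies an arbitrary finite energy solution with the strong one. I would handle it exactly as for 2D Navier--Stokes: all finite energy solutions satisfy the energy equality by Proposition~\ref{prop-energ}, and the coupling terms are linear.
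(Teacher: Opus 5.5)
Your proposal is correct. The $\omega$ and $\epsilon$ equations, the choice of $a$ with the completion of the square, the $L^2_{loc}H^{-1}$--$L^2_{loc}H^1$ duality justification, and the monotonicity argument giving \eqref{smallgammapositive} are exactly what the paper does.

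The only real difference is the regularity step. The paper does not work with the coupled system there. Since $\gamma>0$, the energy equality \eqref{enereq} already gives $\nabla h\in L^2(\R_+\times\R^2)$. So $u$ is a Leray solution of the 2D Navier--Stokes equations with the prescribed forcing $-2\chi\nabla^\perp h\in L^2(\R_+\times\R^2)$, and the paper simply invokes classical 2D Navier--Stokes regularity.

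Your route is coupled $H^1$ estimates from an a.e. time $t_0$, construction of a strong solution, then weak--strong uniqueness. This is in effect a reproof of that classical theorem for the full system. It works: the coupling and transport terms are handled by Young's inequality plus Gr\"onwall using time-integrability of $\|\nabla u\|^2$ and $\|\nabla h\|^2$, rather than literally absorbed. It also yields $h\in C^0((0,\infty);H^1)\cap L^2_{loc}((0,\infty);H^2)$.

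That extra regularity is never used, however. The identity only needs $\epsilon\in L^2_{loc}((0,\infty);H^1)$, i.e. $h\in L^2_{loc}H^1$, which the energy equality provides directly. The paper's decoupling is therefore shorter and avoids a separate existence and uniqueness argument for strong solutions of the coupled system.
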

\begin{proof}
We know that $\nabla h\in L^2(\R_+\times\R^2)$, see relation \eqref{enereq}. The velocity field $u$ verifies the 2D Navier-Stokes equations with forcing $-2\chi\nabla^\perp h\in L^2(\R_+\times\R^2)$. The classical regularity results for the 2D Navier-Stokes equations imply that $u\in C^0((0,\infty);H^1(\R^2))\cap L^2_{loc}((0,\infty);H^2(\R^2))$.

We prove now the enstrophy identity \eqref{enEO22}. Taking the $\curl$ of the first equation in \eqref{MP2D} implies the following equation for $\omega$:
\begin{equation}\label{omega}
\partial_t\omega + u\cdot\nabla\omega=(\mu+\chi)\Delta\omega-2\chi\Delta h=\mu\Delta\omega-2\chi\Delta\epsilon.
\end{equation}
Subtracting one half of the equation above from the last equation in \eqref{MP2D} gives the following PDE verified by $\epsilon$:
\begin{equation}\label{eps}
\partial_t\epsilon + u\cdot\nabla\epsilon=2\chi\omega-4\chi h-\frac{1}{2}(\mu+\chi)\Delta\omega+(\chi+\gamma)\Delta h
=(\gamma+\chi)\Delta\epsilon+\frac{\gamma-\mu}{2}\Delta\omega-4\chi\epsilon.
\end{equation}

One can easily check that
\begin{equation*}
u\cdot\nabla\epsilon,\ u\cdot\nabla\omega,\ \epsilon,\ \Delta\omega, \ \Delta \epsilon\in L^2_{loc}((0,\infty);H^{-1}(\R^2))
\end{equation*}
so all the terms in \eqref{omega} and \eqref{eps} belong to the space $ L^2_{loc}((0,\infty);H^{-1}(\R^2))$. We infer that we can multiply these two relations by quantities that are in the space $L^2_{loc}((0,\infty);H^{1}(\R^2))$. Clearly $\omega,\epsilon\in L^2_{loc}((0,\infty);H^{1}(\R^2))$, so we can  multiply \eqref{omega} by $\omega$, \eqref{eps} by $\epsilon$ and integrate in space. We get  after a few integrations by parts:
\begin{gather*}
\frac12\frac{\dd }{\dd t}\|\epsilon\|^2+(\gamma+\chi)\int|\nabla \epsilon|^2
+\frac{\gamma-\mu}{2} \int\nabla\omega\cdot\nabla\epsilon+4\chi\int|\epsilon|^2=0\\
\intertext{and}
\frac12\frac{\dd }{\dd t}\|\omega\|^2+\mu\int|\nabla \omega|^2 -2\chi\int\nabla\omega\cdot\nabla\epsilon=0.
\end{gather*}
Multiplying by $a$ the second relation above and adding it to the first one we obtain
\begin{multline*}
\frac12\frac{\dd }{\dd t}\Bigl(\|\epsilon\|^2+a\|\omega\|^2\Bigr)
 +4\chi\int|\epsilon|^2
 + (\gamma+\chi)\int|\nabla\epsilon|^2
 +\Bigl(\frac{\gamma-\mu}{2}-2\chi a\Bigr)\int\nabla\epsilon\cdot\nabla\omega\\
 +a\mu\int|\nabla\omega|^2=0.
\end{multline*}
The non-negativity property
\[
(\gamma+\chi)\int|\nabla\epsilon|^2
 +\Bigl(\frac{\gamma-\mu}{2}-2\chi a\Bigr)\int\nabla\epsilon\cdot\nabla\omega
 +a\mu\int|\nabla\omega|^2 \ge0
\]
for all $\nabla\epsilon\in \R^2$ and all  $\nabla\omega\in \R^2$,
is guaranteed by the following condition
\begin{equation*}
 \Bigl(\frac{\gamma-\mu}{2}-2\chi a\Bigr)^2-4(\gamma+\chi)a\mu\le0,
\end{equation*}
that can be rewritten in an equivalent way as
\begin{equation}\label{conda}
 16\chi^2a^2-8(\gamma\chi+\mu\chi+2\mu\gamma)a+(\gamma-\mu)^2\le0.
\end{equation}
The discriminant above is positive. The choice  \eqref{choice-a} corresponds to the minimizer of the above expression and ensures that the above relation holds. For this choice of $a$, applying the square reduction yields
\begin{align*}
  (\gamma+\chi)\int|\nabla\epsilon|^2
 &+\Bigl(\frac{\gamma-\mu}{2}-2\chi a\Bigr)\int\nabla\epsilon\cdot\nabla\omega
 +a\mu\int|\nabla\omega|^2\\
 &=(\gamma+\chi)\int|\nabla\epsilon|^2-\frac{\mu(\chi+\gamma)}{\chi}\int\nabla\epsilon\cdot\nabla\omega
 +\frac{\mu}{4\chi^2}(\gamma\chi+\mu\chi+2\mu\gamma)\int|\nabla\omega|^2\\
 &=(\gamma+\chi)\int\Bigl|\nabla\epsilon-\frac{\mu}{2\chi}\nabla\omega\Bigr|^2
 +\frac{\mu\gamma}{4\chi^2}(\mu+\chi)\int|\nabla\omega|^2.
 \end{align*}
This implies identity~\eqref{enEO22}.

Finally, relation \eqref{smallgammapositive} follows as in the proof of Corollary \ref{corgamma0}. This completes the proof.
\end{proof}
\begin{remark}
In the case $\gamma=0$, the only possible choice of $a$ satisfying \eqref{conda} is the choice given in \eqref{choice-a}. In that case, \eqref{enEO22} is the same as \eqref{identity}. We choose to give a different proof of this new identity in the case $\gamma=0$ because the formal proof of Lemma \ref{lidentity} can be adapted to show Proposition \ref{prop-deriv} while the proof of Proposition \ref{pident} cannot. Indeed, in the proof of Proposition \ref{pident} we have to integrate by parts terms of the form $\int\Delta\omega\epsilon$ and such a term does not have a sign. In such terms, we can neither pass to the limit $\delta\to0$ (as we did in the proof of Proposition \ref{prop-deriv}), nor can we find a good sign as we did in relation \eqref{example}.  
\end{remark}

\section{Energy decay}
\label{sec-energy}
In the previous section, we used a new identity and combined it with a monotonicity argument (see, e.g., \cite{GuterresNichePerusatoZingano2023,ZinganoJMFM}) to obtain fast decay for the angular velocity, namely $\|h(t)\|_{L^2}^2 = o(t^{-1})$, and also for the gradient of $u$, $\|\nabla u (t)\|^2_{L^2} = o(t^{-1})$. We will continue in this direction and prove two results. The first result shows that  $\|u(t)\|_{L^2} \to 0$ as $t \to \infty$, see Theorem \ref{Thm_little_o} below. The second result states that a rate of decay for $\|u(t)\|_{L^2}$ implies a corresponding rate of decay for  $\|h(t)\|_{L^2}$ and for $\|\nabla u (t)\|_{L^2}$, see Proposition \ref{consequence1} below. These two results will be required to prove Theorem \ref{th:decay} in Section \ref{sec:decay}, but they are significant on their own.

\begin{theorem}\label{Thm_little_o}
Let $\mu,\chi>0$ and $\gamma\ge0$. Let $(u_0,h_0)\in L^2_\sigma(\R^2)\times L^2(\R^2)$
and $(u,h)$ be a finite energy solution of \eqref{MP2D}
with initial data $(u_0,h_0)$.
Then
\[
\|u(t)\|_{L^2}\to 0
\quad\text{as $t\to \infty$}.
\]
\end{theorem}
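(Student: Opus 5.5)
Our plan is to treat the velocity equation as a forced 2D Navier--Stokes system and use the fact that the forcing is already known to be small. Projecting the first equation of \eqref{MP2D} with $\P$ and rewriting $(\mu+\chi)\Delta u-2\chi\nabla^\perp h$ using $\nabla^\perp\curl u=\Delta u$ (valid since $\dive u=0$), we obtain
\begin{equation*}
\partial_t u+\P(u\cdot\nabla u)=\mu\Delta u-\chi\nabla^\perp(2h-\om).
\end{equation*}
The forcing $f:=-\chi\nabla^\perp(2h-\om)$ is the derivative of a function belonging to $L^2(\R_+\times\R^2)$, by the energy equality \eqref{enereq}. Moreover, by Corollary~\ref{corgamma0} and \eqref{smallgammapositive}, $\nl2{(2h-\om)(t)}=o(t^{-1/2})$ for almost every large $t$.

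The argument has three steps.

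\textbf{Step 1 (energy inequality on $[s,t]$).} Multiplying the equation above by $u$, which is rigorous by the argument of Proposition~\ref{prop-energ}, and bounding the forcing term via $\int f\cdot u=\chi\int (2h-\om)\,\om$ together with Young's inequality, we expect
\begin{equation*}
\nl2{u(t)}^2\le\nl2{u(s)}^2+C\int_s^t\nl2{2h-\om}^2 .
\end{equation*}
Since the integral tail tends to zero as $s\to\infty$, the quantity $\nl2{u(t)}^2$ is ``almost nonincreasing''. It therefore suffices to show that $\liminf_{t\to\infty}\nl2{u(t)}=0$ along a suitable sequence, or else to run a splitting argument.

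\textbf{Step 2 (Kato--Masuda style splitting via Duhamel).} Write
\begin{equation*}
u(t)=e^{\mu(t-s)\Delta}u(s)-\int_s^t e^{\mu(t-\tau)\Delta}\bigl[\P\dive(u\otimes u)+\chi\nabla^\perp(2h-\om)\bigr]\dd\tau .
\end{equation*}
Fix $\eta>0$ and choose $s$ large so that $\int_s^\infty(\nl2{\nabla u}^2+\nl2{2h-\om}^2)<\eta$. The Duhamel term with $\nabla^\perp(2h-\om)$ has $L^2$ norm at most $C\bigl(\int_s^t\nl2{2h-\om}^2\bigr)^{1/2}$, by the standard maximal smoothing estimate $\|\int e^{(t-\tau)\Delta}\nabla g\|_{L^2}\lesssim\|g\|_{L^2_tL^2_x}$. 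For the nonlinear term we use instead Schonbek-type Fourier splitting on low frequencies: $\widehat{u\otimes u}$ is bounded in $L^\infty_\xi$ by $\nl2{u}^2$, so on $|\xi|\le\rho$ its contribution is $\lesssim\rho^2(t-s)\nl2{u_0}^2+\ldots$.

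\textbf{Step 3 (high frequencies and conclusion).} Alternatively, and more simply, we can run Fourier splitting directly on the energy inequality:
\begin{equation*}
\tfrac{\dd}{\dd t}\nl2u^2+\mu\nl2{\nabla u}^2\le C\nl2{2h-\om}^2 .
\end{equation*}
Use $\mu\nl2{\nabla u}^2\ge\mu\rho(t)^2\bigl(\nl2u^2-\int_{|\xi|\le\rho}|\hat u|^2\bigr)$ with $\rho(t)^2=k/(\mu(t+1))$, and control the low-frequency mass $\int_{|\xi|\le\rho}|\hat u(t)|^2$ through the Duhamel formula from time $0$. The linear part gives $\int_{|\xi|\le\rho}|e^{-\mu t|\xi|^2}\hat u_0|^2$, which tends to $0$ as $\rho\to0$ since $\hat u_0\in L^2$. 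The nonlinear part is bounded by $\rho^2\bigl(\int_0^t|\xi|\nl2u^2\bigr)^2\lesssim\rho^4t^2$. This crude bound is the main obstacle, since it does not vanish. The fix is the classical trick of splitting $u_0=a+b$ with $a\in L^1\cap L^2$ and $\nl2b$ small: for the $L^1$ part the usual decay estimates give a rate, while the small part is controlled by stability in $L^2$ (the difference of two finite energy solutions). Since weak--strong stability for $\gamma=0$ is delicate, I would first try instead to bound the low frequencies with the $h$-term included, using the fact that $\nabla^\perp(2h-\om)$ contributes at most $\rho\bigl(\int\nl2{2h-\om}^2\bigr)^{1/2}\to0$, and the quadratic term as $\rho\int_s^t\nl2u^2/\ldots$ after starting the Duhamel integral at a late time $s$. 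This yields $\limsup\nl2{u(t)}^2\lesssim\eta$ for every $\eta>0$, which proves Theorem~\ref{Thm_little_o}.
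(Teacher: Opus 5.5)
\textbf{There is a genuine gap.} The proposal stops exactly at the step you yourself call the main obstacle, and neither remedy you suggest removes it. So the final claim $\limsup_{t\to\infty}\|u(t)\|_{L^2}^2\lesssim\eta$ is asserted rather than proved, and Step~2 breaks off at the same point (the ``$\ldots$''). The only bound you use for the nonlinearity is $|\mathcal F(\P\dive(u\otimes u))(\xi)|\lesssim|\xi|\,\|u\|_{L^2}^2$. With it, restarting Duhamel at a late time $s$ gives a low-frequency contribution of order $\rho(t)^4(t-s)^2\sup_{r\ge s}\|u(r)\|_{L^2}^4$, and $\rho(t)^4(t-s)^2\to k^2/\mu^2$ as $t\to\infty$. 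For fixed $k$ this is small only if you already know that $\|u\|_{L^2}$ is small, so the late start buys nothing. The splitting $u_0=a+b$ needs an $L^2$-stability estimate between two finite energy solutions, which you do not have for $\gamma=0$: the difference equation for $h$ contains $\delta u\cdot\nabla h$, with $h$ only in $L^2$.

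\textbf{The missing ingredient} is to keep the derivative on $u$ and estimate the nonlinearity in $L^1$: $\|u\cdot\nabla u\|_{L^1}\le\|u\|_{L^2}\|\nabla u\|_{L^2}$.

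\emph{The paper's route.} The paper runs a direct Duhamel argument from $t=0$ with $e^{(\mu+\chi)t\Delta}$ and forcing $-2\chi\nabla^\perp h$. The two integrands are bounded by $(t-\tau)^{-1/2}\|u\|_{L^2}\|\nabla u\|_{L^2}$ and $(t-\tau)^{-1/2}\|h\|_{L^2}$, and the integrals are split at $t_0$. It then uses $\|\nabla u(\tau)\|_{L^2},\|h(\tau)\|_{L^2}\le\eta\,\tau^{-1/2}$ for a.e. $\tau\ge t_0$ (Corollary~\ref{corgamma0} and Proposition~\ref{pident}), i.e.\ precisely the decay you quote but never use, together with $\int_0^t(t-\tau)^{-1/2}\tau^{-1/2}\dd\tau=\pi$. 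The same $L^1$ bound closes your Step~2 verbatim, with your maximal-regularity estimate handling the $2h-\om$ term.

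\emph{Rescuing your Fourier splitting.} The $L^1$ bound also repairs Step~3, using only the energy equality. Starting at time $s$, the quadratic contribution to $I(t)=(2\pi)^{-2}\int_{|\xi|\le\rho(t)}|\widehat u(\xi,t)|^2\dd\xi$ is then at most $C\rho(t)^2(t-s)\|z_0\|_{L^2}^2\int_s^\infty\|\nabla u\|_{L^2}^2\le C(k/\mu)\|z_0\|_{L^2}^2\int_s^\infty\|\nabla u\|_{L^2}^2$, which is small for $s$ large.

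In fact even your crude bound suffices. The splitting gives $\limsup_{t\to\infty}\|u(t)\|_{L^2}^2\le\limsup_{t\to\infty}I(t)$ for every $k>0$. Meanwhile the quadratic and $2h-\om$ contributions to $I$ are $O(k^2)$ and $O(k)$ uniformly in time, so you may let $k\to0$.

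\emph{Comparison.} These Fourier-splitting variants bypass the enstrophy identity, and hence Proposition~\ref{prop-deriv}, entirely. The paper's route is shorter because that decay is already available from Section~\ref{sec:val-ide}.
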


\begin{proof}

Let $\eta>0$. By Corollary~\ref{corgamma0} and Proposition \ref{pident},
there exists $t_0\ge0$ such that, 
\begin{equation}\label{bdh}
\text{for a.e. $t\ge t_0$},\quad
\|\nabla u(t)\|_{L^2},\,\| h(t)\|_{L^2} \leq \eta\,t^{-\frac{1}{2}}.
\end{equation}
Moreover, for any $L^2$ function $f$, we have $\|e^{\tau\Delta}f \|_{L^2} \to 0$ as $\tau\to \infty$. 
So $t_0$ can be chosen so that we also have 
\[
\|e^{(\mu+\chi)t\Delta} u_0\|_{L^2} \leq \eta, \quad\,\,\forall t \geq  t_0.
\]
We apply the Duhamel formula in the equation of $u$, the first line of \eqref{sysNLproj}, and obtain that
\begin{equation*}
u(t)=e^{(\mu+\chi)\,t\,\Delta} u_0
-\int_{0}^t e^{(\mu+\chi)\,(t-\tau)\,\Delta}\P (u\cdot\nabla u)(\tau) \dd\tau
- 2\chi \int_{0}^t e^{(\mu+\chi)\,(t-\tau)\,\Delta} \nabla^\perp h(\tau) \dd\tau.
\end{equation*}
We take the $L^2$ norm above:
\begin{align*}
    \|u(t)\|_{L^2} 
&\leq \|e^{(\mu+\chi)\,t\,\Delta} u_0\|_{L^2} + 
    \int_{0}^t \|e^{(\mu+\chi)\,(t-\tau)\,\Delta}\P (u\cdot\nabla u)(\tau)\|_{L^2} \dd\tau   + 2\chi \int_{0}^t \|e^{(\mu+\chi)\,(t-\tau)\,\Delta} \nabla^\perp h(\tau)\|_{L^2} \dd\tau\\
& \leq    \eta + I_1 + I_2
\end{align*}
for all $t\geq t_0$.
Now, we will use standard bounds for the heat kernel to estimate $I_1$ and $I_2$. For $I_1$, we get, for $t\ge 2t_0$,
\begin{equation*}
\begin{split}
I_1:&=\int_{0}^{t} 
\|e^{(\mu+\chi)\,(t-\tau)\,\Delta} \P(u\cdot\nabla u)(\tau)\|_{L^2} \dd\tau\\
&\leq\int_{0}^{t} 
\|e^{(\mu+\chi)\,(t-\tau)\,\Delta} (u\cdot\nabla u)(\tau)\|_{L^2} \dd\tau\\
&\lesssim\int_{0}^{t} 
(t-\tau)^{-1/2}\|u(\tau)\|_{L^2}\|\nabla u(\tau)\|_{L^2} \dd\tau\\
&=\int_{0}^{t_0} 
(t-\tau)^{-1/2}\|u(\tau)\|_{L^2}\|\nabla u(\tau)\|_{L^2} \dd\tau
+\int_{t_0}^{t} 
(t-\tau)^{-1/2}\|u(\tau)\|_{L^2}\|\nabla u(\tau)\|_{L^2} \dd\tau\\
&\lesssim t^{-1/2}\int_{0}^{t_0} \|u(\tau)\|_{L^2}\|\nabla u(\tau)\|_{L^2} \dd\tau+\eta \|z_0\|_{L^2}\int_{0}^{t}  (t-\tau)^{-1/2}\tau^{-1/2} \dd\tau\\    
& \lesssim \eta
    \end{split}
\end{equation*}
if $t_0$ is sufficiently large. We used above \eqref{bdh} and the energy estimates \eqref{enereq}.

On the other hand, for $I_2$, we use again \eqref{bdh} and estimate, for $t\ge2t_0$,
\begin{equation*}
\begin{split}
I_2:&= 2\chi \int_{0}^t \|e^{(\mu+\chi)\,(t-\tau)\,\Delta} \nabla^\perp h(\tau)\|_{L^2} \dd\tau\\
&\lesssim       \int_{0}^{t} (t-\tau)^{-1/2} \|h(\tau)\|_{L^2} \dd\tau\\
&=      \int_{0}^{t_0} (t-\tau)^{-1/2} \|h(\tau)\|_{L^2} \dd\tau + \int_{t_0}^{t} (t-\tau)^{-1/2} \|h(\tau)\|_{L^2} \dd\tau\\
&\lesssim   t^{-1/2}    \int_{0}^{t_0} \|h(\tau)\|_{L^2} \dd\tau
        +\eta\int_{0}^t (t-\tau)^{-1/2} \tau^{-1/2} \dd\tau\\
     &   \lesssim \eta
\end{split}
\end{equation*}
for $t_0$ sufficiently large. Therefore, $\|u(t)\|_{L^2}\leq C\eta$ for $t$ sufficiently large and this completes the proof.
\end{proof}

We require now the following simple lemma. 
\begin{lemma}\label{easylemma}
Let $f$ be a function integrable on the interval $[0,1]$ and $A\subset[0,1]$ a negligible set. There exists a sequence $t_n\to0$, $t_n\not\in A$ for all $n$, such that $t_n f(t_n)\to0$.
\end{lemma}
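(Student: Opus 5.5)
We argue by contradiction. Suppose no such sequence exists. Then there are $\eta>0$ and $\tau\in(0,1]$ such that $t|f(t)|\geq\eta$ for every $t\in(0,\tau)\setminus A$. Indeed, if for every $\eta=1/n$ and every $\tau=1/n$ we could find some $t_n\in(0,1/n)\setminus A$ with $t_n|f(t_n)|<1/n$, these $t_n$ would form the required sequence. Here we may take $f$ to be any fixed representative, finite everywhere, since the statement concerns pointwise values of $f$.

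From this lower bound we get $|f(t)|\geq \eta/t$ for almost every $t\in(0,\tau)$, because $A$ is negligible. Integrating over $(0,\tau)$ gives
\begin{equation*}
\int_0^1|f(t)|\dt\geq\int_0^\tau|f(t)|\dt\geq\eta\int_0^\tau\frac{\dd t}{t}=+\infty,
\end{equation*}
which contradicts the integrability of $f$ on $[0,1]$. Hence the sequence $t_n$ exists.

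The only point needing care is the negation of the conclusion. It must be phrased so that both the smallness threshold and the neighbourhood of $0$ are uniform, and so that it accounts for the excluded set $A$. The diagonal choice $\eta=\tau=1/n$ handles both at once. Everything else is the non-integrability of $1/t$ at the origin.
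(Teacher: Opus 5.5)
Your proof is correct and is essentially the paper's argument: you negate the conclusion at scale $1/n$ (for both the threshold and the neighbourhood of $0$), deduce $|f(t)|\geq 1/(nt)$ almost everywhere near $0$, and contradict integrability through the divergence of $\int_0^{1/n}\frac{\dd t}{t}$. Using the open interval $(0,1/n)$ is slightly cleaner than the paper's $[0,\frac1n]$, which would also admit the trivial choice $t_n=0$.
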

\begin{proof}
For every $n\in\N^*$, there exists $t_n\in[0,\frac1n]\setminus A$ such that $|t_n f(t_n)|\leq\frac1n$. Indeed, if this was not the case, we would have $|tf(t)|>\frac1n$ on $[0,\frac1n]\setminus A$ so
\begin{equation*}
\int_0^{\frac1n}|f(t)|\dt=\int_{[0,\frac1n]\setminus A}|f(t)|\dt\geq\int_{[0,\frac1n]\setminus A} \frac{1}{nt}\dt=\int_0^{\frac1n} \frac{1}{nt}\dt=+\infty
\end{equation*}
which contradicts the integrability of $f$. The sequence $t_n$ verifies the conclusion of the lemma.
\end{proof}

The second result of this section is the following proposition.
\begin{proposition}\label{consequence1}
Let $(u_0,h_0)\in L^2_\sigma(\R^2)\times L^2(\R^2)$ and $\Gamma\ge0$.
If $(u,h)$ is a finite energy solution of~\eqref{MP2D}
with initial data $(u_0,h_0)$, and if $\|u(t)\|^2_{L^2} = O(t^{-\Gamma})$ as $t \to \infty$, then 
\[
\|h(t)\|^2_{L^2} +\|\nabla u(t)\|^2_{L^2} 
= O(t^{-\Gamma - 1})
\qquad\text{a.e. as $t\to+\infty$}.
\]
\end{proposition}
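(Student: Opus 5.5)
The plan is to combine the energy equality \eqref{enereq} with the monotone quantity from Section~\ref{sec:val-ide}, using Zingano's monotonicity method on dyadic time intervals. Let $G$ denote the monotone (a.e.) representative of $\psi$ in the case $\gamma=0$ (Proposition~\ref{prop-deriv}). For $\gamma>0$, let $G$ be $\Phi(t):=\|\epsilon(t)\|_{L^2}^2+a\|\omega(t)\|_{L^2}^2$, which is nonincreasing by \eqref{enEO22}. In both cases $G$ is comparable to $\|h\|_{L^2}^2+\|\omega\|_{L^2}^2$ up to constants depending only on the material coefficients. Indeed, $\theta$ and $h$ determine $\omega$ linearly and conversely, and $\epsilon,\omega$ determine $h$. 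Since $\|\nabla u\|_{L^2}=\|\omega\|_{L^2}$, it suffices to show $G(t)\lesssim t^{-\Gamma-1}$ for $t$ large outside a null set.

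First step: an integrated dissipation bound. From \eqref{enereq} with $s=t/2$,
\[
2\mu\int_{t/2}^t\|\nabla u\|_{L^2}^2+2\chi\int_{t/2}^t\|\omega-2h\|_{L^2}^2\le \|u(t/2)\|_{L^2}^2+\|h(t/2)\|_{L^2}^2 .
\]
The right-hand side involves $\|h(t/2)\|^2$, which is not yet known to decay like $t^{-\Gamma}$. To handle this, I will bound $\int_{t/2}^t\|h\|^2$ through the $h$-energy relation \eqref{enerh} (valid also for $\gamma>0$, with an extra favorable gradient term). Integrated from $s$ to $t$, it gives
\[
\|h(t)\|^2+4\chi\int_s^t\|h\|^2\le\|h(s)\|^2+\chi\int_s^t\|\omega\|^2 .
\]
Then $\int_{t/2}^t(\|h\|^2+\|\omega\|^2)\lesssim \|u(t/2)\|^2+\|h(t/2)\|^2+\|h(t/2)\|^2$.

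Second step: monotonicity. Since $G$ is nonincreasing, $\tfrac t2 G(t)\le\int_{t/2}^tG\lesssim \|u(t/2)\|^2+\|h(t/2)\|^2\lesssim (t/2)^{-\Gamma}+G(t/2)$. Here $\|h(t/2)\|^2\lesssim G(t/2)$ holds off the null set, but $t\mapsto\|h(t)\|$ is continuous, so this inequality holds for all $t$ after adjusting $G$. This gives the recursion
\[
G(t)\le C t^{-\Gamma-1}+\frac{C}{t}G(t/2).
\]
Corollary~\ref{corgamma0} and Proposition~\ref{pident} already give $G(t/2)=o(t^{-1})$, so iterating improves the exponent by one each time. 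Starting from $G(t)=o(t^{-1})$, we get $G(t)\lesssim t^{-\Gamma-1}+t^{-2}$, then $t^{-\Gamma-1}+t^{-3}$, and so on. After finitely many iterations (more than $\Gamma$ of them) this yields $G(t)=O(t^{-\Gamma-1})$.

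The main obstacle is the a.e. issue when $\gamma=0$: $\psi$ agrees with $G$ only off a null set $A$. The $\int_{t/2}^t$ argument works with $\psi$ inside the integral, since modifying $\psi$ on a null set does not change the integral. The conclusion for $\|\nabla u\|$ then holds only for $t\notin A$, which is why the statement says ``a.e.''. The point needing care is the $h(t/2)$ term, which I control by continuity of $\|h\|$ together with $\|h\|^2\le\mu^{-1}\psi$. Lemma~\ref{easylemma} is presumably needed when the starting point $s$ of an energy relation must be chosen outside $A$ to make sense of $\psi(s)$; I would invoke it there if required.
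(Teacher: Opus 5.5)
Your argument is correct, but it reaches the result by a different route from the paper's.

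\textbf{The paper's route.} It first proves implication (I): $\|u\|^2+\|h\|^2=O(t^{-\Gamma})\Rightarrow\|h\|^2+\|\theta\|^2=O(t^{-\Gamma-1})$, working with time weights.
\begin{itemize}
\item A $t^\alpha$-weighted energy equality with $\alpha>\Gamma$ gives $\int_0^t s^\alpha\psi(s)\,ds\lesssim t^{\alpha-\Gamma}$.
\item A $t^{\alpha+1}$-weighted version of the enstrophy computation shows that $\psi_\alpha(t)=t^{\alpha+1}\psi(t)-(\alpha+1)\int_0^t s^\alpha\psi$ is nonincreasing in $\D'$.
\item Lemma~\ref{easylemma} starts the monotonicity at times $t_n\to0$, so $\psi_\alpha\le0$.
\end{itemize}
A separate bootstrap, implication (II), then removes the assumption on $\|h\|$.

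\textbf{Your route.} You use only the already established unweighted monotonicity of $\psi$ (or of $\|\epsilon\|^2+a\|\omega\|^2$ when $\gamma>0$). You combine it with the plain energy equality \eqref{enereq} on $[t/2,t]$ to get $G(t)\le Ct^{-\Gamma-1}+Ct^{-1}G(t/2)$. Iterating this recursion absorbs the paper's bootstrap (II): the term $\|h(t/2)\|^2\lesssim G(t/2)$ is exactly what that bootstrap handles.

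\textbf{Comparison.} Your approach needs no new PDE-level justification. The paper must rerun the mollification arguments of Propositions~\ref{prop-energ} and~\ref{prop-deriv} with time weights, and you do not; Lemma~\ref{easylemma} also becomes superfluous. The paper's weighted formulation instead yields the sharp rate in one shot, from a single global-in-time inequality, without iterating.

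\textbf{Minor points.}
\begin{itemize}
\item The detour through \eqref{enerh} in your first step is unnecessary. Your sentence suggesting it deals with $\|h(t/2)\|^2$ is also misleading, since that relation has the same term on its right-hand side. In fact \eqref{enereq} already bounds $\int_{t/2}^t\|\omega\|^2$ and $\int_{t/2}^t\|\omega-2h\|^2$, hence $\int_{t/2}^t\|h\|^2$. The $\|h(t/2)\|^2$ term is handled only by the recursion.
\item No adjustment of $G$ is needed to get $\|h(s)\|^2\le\mu^{-1}G(s)$ for every $s>0$. Take $s_n\downarrow s$ with $s_n\notin A$; then $\|h(s_n)\|^2\le\mu^{-1}G(s_n)\le\mu^{-1}G(s)$. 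Pass to the limit using the continuity of $t\mapsto\|h(t)\|_{L^2}$. Approximating from the left would only give $G(s^-)$.
\item To start the iteration, you do not need $G(t)=o(t^{-1})$. Finiteness and monotonicity of $G$ on $[t_0/2,\infty)$ suffice.
\end{itemize}
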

The notation $O(t^{-\Gamma - 1})$ a.e. as $t\to+\infty$ means that there exists a negligible set $A$ such that the bound $O(t^{-\Gamma - 1})$ holds true for $t\not\in A$.
\begin{proof}
We assume that $\gamma=0$. The case $\gamma>0$ can be proved similarly.

Let us first assume that $\|u(t)\|_{L^2}^2+\|h(t)\|_{L^2}^2 = O(t^{-\Gamma})$ a.e.. 
Let $\alpha>\Gamma$. The following modified energy equality holds for all $t\ge0$:
\begin{multline*}
t^\alpha\Bigl(\nl2{u(t)}^2+\nl2{h(t)}^2\Bigr)
+2\mu\int_0^t s^\alpha \nl2{\nabla u(s)}^2 \dd s
+2\gamma\int_0^t s^\alpha \nl2{\nabla h(s)}^2 \dd s\\
+2\chi\int_0^t s^\alpha \nl2{(\omega-2h)(s)}^2 \dd s
=\alpha\,\displaystyle\int_0^t s^{\alpha-1}\Bigl(\nl2{u(s)}^2+\nl2{h(s)}^2\Bigr) \dd s.
\end{multline*}
The above equality is obtained formally by multiplying the two equations in the system \eqref{MP2D} respectively by 
$t^\alpha u(x,t)$ and $t^\alpha\,h(x,t)$
and integrating in space and by parts in time on $[0,t]$, and is justified rigorously for finite energy solutions arguing as in Proposition~\ref{prop-energ}.
So, when $\|u(t)\|_{L^2}^2+\|h(t)\|_{L^2}^2 = O(t^{-\Gamma})$ a.e., we deduce in particular
\begin{equation*}
    \int_0^t s^\alpha \Big( \nl2{\omega(s)}^2 +
      \nl2{(\omega-2h)(s)}^2 \Bigr)\dd s\, \lesssim t^{\alpha-\Gamma}.
\end{equation*}
But, recalling the definition of $\theta=h-\frac{\chi+\mu}{2\chi}\omega$,
we get from the triangular inequality
\[
\|h(s)\|_{L^2}^2+\|\theta(s)\|_{L^2}^2\lesssim
 \nl2{\omega(s)}^2 +\nl2{(\omega-2h)(s)}^2
\]
So,
\begin{equation}\label{starf}
\int_0^t s^\alpha\Bigl(\mu \|h(s)\|_{L^2}^2+\chi\|\theta(s)\|_{L^2}^2
\Bigr)\dd s \lesssim t^{\alpha-\Gamma}.
\end{equation}

We now go back to the computations made to establish the
new enstrophy identity of Section~\ref{sec:val-ide}, with some slight modifications: we multiply identity~\eqref{eq-theta} by 
$\chi\, t^{\alpha+1}\theta(t)$ and the
equation for $h$ in~\eqref{MP2D} by $\mu \,t^{\alpha+1}h(t)$,
then integrate in space and by parts in time on~$[0,t]$.
We deduce as in the proof of Proposition \ref{prop-deriv}, after neglecting the non-negative terms that one obtains on the left-hand side, that, for finite energy solutions, the function
\begin{equation*}
\psi_\alpha(t)=t^{\alpha+1} \Bigl(\mu\|h(t)\|_{L^2}^2+\chi\|\theta(t)\|_{L^2}^2\Bigr)
-
(\alpha+1)\int_0^t s^\alpha
\Bigl( \mu\, \|h(s)\|_{L^2}^2+\chi \|\theta(s)\|_{L^2}^2\Bigr)\,ds
\end{equation*}
has a negative derivative in the sense of the distributions. So there exists a negligible set $A$ such that $\psi_\alpha$ is equal to a decreasing function on $\R_+\setminus A$. Let $t>0$, $t\not\in A$. We apply Lemma \ref{easylemma} to the function $t\mapsto \mu\|h(t)\|_{L^2}^2+\chi\|\theta(t)\|_{L^2}^2$ which is integrable thanks to Lemma \ref{psil1}, and find a sequence of times $t_n$ such that $t_n\to0$, $t_n\not\in A$ and  $t_n( \mu\|h(t_n)\|_{L^2}^2+\chi\|\theta(t_n)\|_{L^2}^2)\to0$. By monotonicity, we have that $\psi_\alpha(t)\leq \psi_\alpha(t_n)$ if $n$ is large enough. We clearly have that 
$t_n^{\alpha+1}( \mu\|h(t_n)\|_{L^2}^2+\chi\|\theta(t_n)\|_{L^2}^2)\to0$ so $\psi_\alpha(t_n)\to0$. Letting $n\to\infty$ therefore implies that $\psi_\alpha(t)\leq 0$. We proved that 
\begin{equation*}
t^{\alpha+1} \Bigl(\mu\|h(t)\|_{L^2}^2+\chi\|\theta(t)\|_{L^2}^2\Bigr)
\le 
(\alpha+1)\int_0^t s^\alpha
\Bigl( \mu\, \|h(s)\|_{L^2}^2+\chi \|\theta(s)\|_{L^2}^2\Bigr)\,ds,
\end{equation*}
for almost all $t\ge0$.
Relation \eqref{starf} implies that, for almost all $t\ge0$,
\[
t^{\alpha+1}\Bigl(\mu \|h(t)\|_{L^2}^2+\chi\|\theta(t)\|_{L^2}^2\Bigr)
\lesssim
t^{\alpha-\Gamma}.
\]
Hence, we proved that, for finite energy solutions, and for all
$\Gamma\ge0$, the following implication holds:
\[
\label{im:i}
\|u(t)\|_{L^2}^2+\|h(t)\|_{L^2}^2 = O(t^{-\Gamma})\;\Longrightarrow\;
\|h(t)\|_{L^2}^2+\|\theta(t)\|_{L^2}^2
=O(t^{-\Gamma-1})
\qquad\text{a.e. as $t\to+\infty$}.
\tag{I}
\]

Now, we claim that for $\Gamma\ge0$  we have the following implication:
\[
\label{im:ii}
\|u(t)\|_{L^2}^2 = O(t^{-\Gamma})\;\Longrightarrow\;
\|h(t)\|_{L^2}^2 = O(t^{-\Gamma})
\qquad\text{as $t\to+\infty$}.
\tag{II}
\]
To see this, we consider first the case $0 \le \Gamma \leq 1$. 
We already know that $t \|h(t)\|^2 \to 0$ by Corollary~\ref{corgamma0}.
Thus, in particular, $\|h(t)\|^2=O(t^{-\Gamma})$, and \eqref{im:ii} trivially holds true.
Proceeding to the case where $1 < \Gamma \leq 2$, we have the following: 
as $t\to+\infty$, if $\|u(t)\|_{L^2}^2 = O(t^{-\Gamma})$, then $\|u(t)\|_{L^2}^2=O(t^{-1})$. Since we always have that $\|h(t)\|_{L^2}^2=O(t^{-1})$, \eqref{im:i} implies that  
$\|h(t)\|_{L^2}^2=O(t^{-2})=O(t^{-\Gamma})$, as $t\to+\infty$.
So \eqref{im:ii} holds in the case $1<\Gamma\le 2$ too. The general case $\Gamma$
is proved by boot-strapping, in the same way.

Therefore, implication~\eqref{im:i} improves to 
\[
\forall\Gamma\ge0,\quad
\|u(t)\|_{L^2}^2 = O(t^{-\Gamma})\;\Longrightarrow\;
\|h(t)\|_{L^2}^2+\|\theta(t)\|_{L^2}^2
=O(t^{-\Gamma-1})
\qquad\text{a.e. as $t\to+\infty$}.
\]
But, recalling the definition of $\theta$ and applying the triangular inequality,
we see that the conditions
\[
\|h(t)\|_{L^2}^2+\|\theta(t)\|_{L^2}^2=O(t^{-\Gamma-1})
\]
and
\[
\|h(t)\|_{L^2}^2+\|\omega(t)\|_{L^2}^2=O(t^{-\Gamma-1})
\]
are equivalent. The conclusion follows recalling that $\|\omega(t)\|_{L^2}=\|\nabla u(t)\|_{L^2}$.
\end{proof}

\begin{remark}
\label{rem:conse1}
Clearly, the assertion of Proposition~\ref{consequence1} remains true, with a simpler proof, for the
solution $(\ul,\hl)$ of the linear system \eqref{sysL} below.
In particular, if $(u_0,h_0)\in L^2_\sigma(\R^2)\times L^2(\R^2)$ and
$\|\ul(t)\|^2_{L^2} = O(t^{-\Gamma})$, as $t \to \infty$ for some $\Gamma\ge0$, then we have 
\[
\|\hl(t)\|^2_{L^2}= O(t^{-\Gamma - 1})
\qquad\text{and}
\qquad
\|\zl(t)\|_{L^2}^2\lesssim (1+t)^{-\Gamma}.
\]

\end{remark}

\section{Linear evolution: explicit solutions and precise asymptotics}
\label{sect-linear}

In this section, we will focus solely on the linear part of \eqref{MP2D} which reads
\begin{equation}\label{sysL}
\left\{
\begin{aligned}
  \partial_t \ul&=(\mu+\chi)\Delta \ul-2\chi\nabla^\perp \hl \\
  \partial_t \hl &=\gamma\Delta \hl +2\chi\oml-4\chi \hl 
\end{aligned}
\right.
\end{equation}
where $\oml=\curl\ul$, and compute explicitly its solutions. The initial data above is $\ul\big|_{t=0}=u_0$ and $\hl\big|_{t=0}=h_0$. Note that the gradient term in  \eqref{MP2D}  can be ignored when taking the linear part; indeed, the three terms $\partial_t \ul$, $\Delta \ul$ and $\nabla^\perp \hl$ are all divergence-free.

We denote $\zl=(\ul,\hl)$, $z_0=(u_0,h_0)$ and  $e^{tM}$ the semigroup associated to \eqref{sysL}, so that $\zl(t)=e^{tM}z_0$. We will compute explicitly  $e^{tM}$ by applying the Fourier transform to \eqref{sysL}.

We start by taking the curl of the first equation in \eqref{sysL}. We obtain the following equivalent system:
\begin{equation*}
\left\{
\begin{aligned}
  \partial_t \oml&=(\mu+\chi)\Delta \oml-2\chi\Delta \hl \\
  \partial_t \hl &=\gamma\Delta \hl +2\chi\oml-4\chi \hl 
\end{aligned}
\right.
\end{equation*}
We apply the Fourier transform and obtain
\begin{equation}\label{linfour}
\partial_t
\begin{pmatrix}
  \widehat\oml\\\widehat \hl 
\end{pmatrix}
=
\begin{pmatrix}
-(\mu+\chi)|\xi|^2 \widehat\oml+2\chi|\xi|^2\widehat \hl \\
-\gamma|\xi|^2 \widehat \hl +2\chi\widehat \oml-4\chi \widehat \hl   
\end{pmatrix}
=A\begin{pmatrix}
  \widehat\oml\\\widehat \hl 
\end{pmatrix}
\end{equation}
where $\xi$ denotes the variable in the Fourier space and the matrix $A$ is given by
\begin{equation*}
A=\begin{pmatrix}
-(\mu+\chi)|\xi|^2 &  2\chi|\xi|^2\\
2\chi &  -(\gamma|\xi|^2 +4\chi)
\end{pmatrix}.
\end{equation*}
The solution of \eqref{linfour} is given by
\begin{equation*}
\begin{pmatrix}
  \widehat\oml(\xi,t)\\\widehat \hl (\xi,t)
\end{pmatrix}
=e^{tA}
\begin{pmatrix}
  \widehat\oml_0(\xi)\\\widehat h_0(\xi)
\end{pmatrix}
 .
\end{equation*}
By the Biot-Savart law we know that
\begin{equation}\label{BS}
\widehat \ul(\xi,t)=-i\frac{\xi^\perp}{|\xi|^2}\widehat\oml(\xi,t).
\end{equation}

We have to compute now the exponential of the matrix $tA$. The formula for the exponential of a 2x2 matrix is well known. Let us introduce some notation:
\begin{equation}\label{notL}
\left\{
\begin{aligned}
\alpha&=\frac12(\mu+\chi+\gamma)R+2\chi\\
\beta&=\frac12(\mu+\chi-\gamma)R-2\chi\\
 D &=\beta^2+4\chi^2R\\
R&=|\xi|^2.
\end{aligned}
\right.
\end{equation}  

\begin{lemma}
We have that
\begin{equation*}
e^{tA}=\frac1{2\sqrt D}
\begin{pmatrix}
 e^{-t(\alpha-\sqrt D)}(\sqrt D-\beta)+e^{-t(\alpha+\sqrt D)}(\sqrt D+\beta)&
\hskip 1cm 2\chi R \big(e^{-t(\alpha-\sqrt D)}-e^{-t(\alpha+\sqrt D)}\big)\\
\hskip -2 cm 2\chi \big(e^{-t(\alpha-\sqrt D)}-e^{-t(\alpha+\sqrt D)}\big)&
\hskip -1.5cm e^{-t(\alpha-\sqrt D)}(\sqrt D+\beta)+e^{-t(\alpha+\sqrt D)}(\sqrt D-\beta)
\end{pmatrix}
  .
\end{equation*}
\end{lemma}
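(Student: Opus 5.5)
The plan is to use the standard formula for the exponential of a $2\times2$ matrix via its spectral decomposition. Write $A=-\alpha I+B$, where $\alpha$ is defined in \eqref{notL}, so that $-\alpha=\frac12\tr A$. Indeed $\tr A=-(\mu+\chi+\gamma)R-4\chi=-2\alpha$. Then
\begin{equation*}
B=A+\alpha I=\begin{pmatrix}-\beta & 2\chi R\\ 2\chi & \beta\end{pmatrix},
\end{equation*}
since $-(\mu+\chi)R+\alpha=-\frac12(\mu+\chi-\gamma)R+2\chi=-\beta$ and $-(\gamma R+4\chi)+\alpha=\beta$. Because $-\alpha I$ commutes with $B$, we get $e^{tA}=e^{-t\alpha}e^{tB}$.

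Next we compute $B^2=(\beta^2+4\chi^2R)I=DI$. This is immediate: the diagonal entries are $\beta^2+4\chi^2R$, and the off-diagonal entries cancel, for instance $-2\chi R\beta+2\chi R\beta=0$. Note that $D\ge0$, and in fact $D>0$ for all $\xi$. When $R=0$ we have $\beta=-2\chi\neq0$, and when $R>0$ we have $4\chi^2R>0$. So $\sqrt D>0$ and no degenerate case arises. Summing the power series and separating even and odd powers then gives
\begin{equation*}
e^{tB}=\cosh(t\sqrt D)\,I+\frac{\sinh(t\sqrt D)}{\sqrt D}\,B.
\end{equation*}

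Finally, multiply by $e^{-t\alpha}$ and expand $\cosh$ and $\sinh$ in exponentials. This gives
\begin{equation*}
e^{tA}=\frac{1}{2\sqrt D}\Big[e^{-t(\alpha-\sqrt D)}(\sqrt D\,I+B)+e^{-t(\alpha+\sqrt D)}(\sqrt D\,I-B)\Big].
\end{equation*}
Reading off the entries with $B_{11}=-\beta$, $B_{22}=\beta$, $B_{12}=2\chi R$ and $B_{21}=2\chi$ yields exactly the stated matrix. For example, the $(1,1)$ entry is $e^{-t(\alpha-\sqrt D)}(\sqrt D-\beta)+e^{-t(\alpha+\sqrt D)}(\sqrt D+\beta)$, and the $(1,2)$ entry is $2\chi R\big(e^{-t(\alpha-\sqrt D)}-e^{-t(\alpha+\sqrt D)}\big)$.

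As an alternative check, one can verify that the right-hand side equals $I$ at $t=0$ and that its $t$-derivative equals $A$ times itself, using $B^2=DI$.

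There is no real obstacle here. The only care needed is the sign bookkeeping in identifying $\alpha$ and $\beta$, together with the observation that $D>0$, so that the division by $\sqrt D$ is legitimate.
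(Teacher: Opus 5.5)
Your proof is correct and is essentially the paper's argument. The paper finds the eigenvalues $\lambda_\pm=-\alpha\pm\sqrt D$ and applies Putzer's formula $e^{tA}=\frac{e^{t\lambda_+}}{2\sqrt D}(A-\lambda_-I)-\frac{e^{t\lambda_-}}{2\sqrt D}(A-\lambda_+I)$; since $A-\lambda_\mp I=B\pm\sqrt D\,I$, this is exactly your final bracketed expression, and the only difference is that you derive the $2\times 2$ formula directly from $B^2=DI$ instead of citing Putzer.
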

\begin{proof}
With the notation introduced above, we have that
\begin{equation*}
A=
\begin{pmatrix}
  -\alpha-\beta&2\chi R\\
2\chi&-\alpha+\beta
\end{pmatrix}.
\end{equation*}
The characteristic polynomial of $A$ is $\lambda^2-\lambda\tr(A)+\det(A)$. The discriminant of the characteristic polynomial is
$[\tr(A)]^2-4\det(A)=4\beta^2+16\chi^2 R$ and its roots, the eigenvalues of $A$, are $\lambda_\pm=-\alpha\pm\sqrt D$. Since the eigenvalues are real and distinct, because $\chi>0$, we can apply Putzer’s spectral formula, see \cite{putzer_avoiding_1966}, which for a 2x2 matrix reads
\begin{equation*}
e^A=e^{\lambda_+} I+\frac{e^{\lambda_+}-e^{\lambda_-}}{\lambda_+-\lambda_-}(A-\lambda_+I)=\frac{e^{\lambda_+}}{\lambda_+-\lambda_-}(A-\lambda_-I)
-\frac{e^{\lambda_-}}{\lambda_+-\lambda_-}(A-\lambda_+I).
\end{equation*}
Since $\lambda_+-\lambda_-=2\sqrt D$, applying the above formula for $e^{tA}$ yields
\begin{align*}
e^{tA}
&=\frac{e^{t\lambda_+}}{\lambda_+-\lambda_-}(A-\lambda_-I)
-\frac{e^{t\lambda_-}}{\lambda_+-\lambda_-}(A-\lambda_+I)\\
&=\frac{e^{-t(\alpha-\sqrt D)}}{2\sqrt D}[A-(-\alpha-\sqrt D)I]
-\frac{e^{-t(\alpha+\sqrt D)}}{2\sqrt D}[A-(-\alpha+\sqrt D)I]\\
&=\frac{e^{-t(\alpha-\sqrt D)}}{2\sqrt D}
  \begin{pmatrix}
    \sqrt D-\beta&2\chi R\\2\chi&\sqrt D+\beta
  \end{pmatrix}
-\frac{e^{-t(\alpha+\sqrt D)}}{2\sqrt D}
  \begin{pmatrix}
    -\beta-\sqrt D&2\chi R\\2\chi&\beta-\sqrt D
  \end{pmatrix}
  .
\end{align*}
\end{proof}

The previous lemma and the Biot-Savart law \eqref{BS} imply the following formula for $\widehat \ul$ and $\widehat \hl $.
\begin{proposition}\label{plf}
The Fourier transform of the solution of \eqref{sysL} is given by the following formula:
\begin{equation}
\label{solou2}
\left\{\begin{aligned}
\widehat \ul(\xi,t)&= E_{1,1}(\xi,t)\widehat u_0(\xi)-i\xi^\perp E_{1,2}(\xi,t)\widehat h_0(\xi)\\
\widehat \hl (\xi,t)&= i E_{2,1}(\xi,t)\xi^\perp\cdot\widehat u_0(\xi)+E_{2,2}(\xi,t)\widehat h_0(\xi)\\
\end{aligned}
\right.
\end{equation}
where
\begin{align}
E_{1,1}&=\frac{1}{2\sqrt{ D }}\bigl[e^{-t(\alpha-\sqrt D)}(\sqrt D-\beta)+e^{-t(\alpha+\sqrt D)}(\sqrt D+\beta)\bigr]\label{e11}\\
E_{1,2}=E_{2,1}&=\frac{\chi}{\sqrt D }\big(e^{-(\alpha-\sqrt{ D })t}-e^{-(\alpha+\sqrt{ D })t}\big)\label{e21}\\
E_{2,2}&=\frac{1}{2\sqrt{ D }}\bigl[e^{-(\alpha-\sqrt{ D })t}(\beta+\sqrt{ D })+e^{-(\alpha+\sqrt{ D })t}(\sqrt{ D }-\beta)\bigr].\label{e22}
\end{align}
\end{proposition}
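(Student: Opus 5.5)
The plan is to read off formula \eqref{solou2} directly from the Fourier-side solution of \eqref{linfour} given by the previous lemma, combined with the Biot–Savart law \eqref{BS}.

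First we compute the transformed initial vorticity. Since $\oml(0)=\curl u_0=\partial_1u_{0,2}-\partial_2u_{0,1}$, we have
\[
\widehat\oml(\xi,0)=i(\xi_1\widehat u_{0,2}-\xi_2\widehat u_{0,1})=i\xi^\perp\cdot\widehat u_0,
\]
with the convention $\xi^\perp=(-\xi_2,\xi_1)$.

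Next we write out the two rows of $e^{tA}$. From the lemma, $(e^{tA})_{11}=E_{1,1}$, $(e^{tA})_{12}=R\,E_{1,2}$ with $E_{1,2}$ as in \eqref{e21}, $(e^{tA})_{21}=E_{2,1}=E_{1,2}$, and $(e^{tA})_{22}=E_{2,2}$. This gives
\[
\widehat\oml=E_{1,1}\, i\xi^\perp\cdot\widehat u_0+R\,E_{1,2}\widehat h_0,\qquad
\widehat\hl=E_{2,1}\, i\xi^\perp\cdot\widehat u_0+E_{2,2}\widehat h_0 .
\]
The second identity is exactly the $\hl$ line of \eqref{solou2}.

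For $\ul$ we apply \eqref{BS}, i.e.\ multiply by $-i\xi^\perp/|\xi|^2=-i\xi^\perp/R$. The $h_0$ term becomes $-i\xi^\perp E_{1,2}\widehat h_0$, because the factor $R$ cancels. The $u_0$ term becomes $E_{1,1}\,\xi^\perp(\xi^\perp\cdot\widehat u_0)/|\xi|^2$. Since $\dive u_0=0$, we have $\xi\cdot\widehat u_0=0$, so $\widehat u_0$ is parallel to $\xi^\perp$. Hence $\xi^\perp(\xi^\perp\cdot\widehat u_0)=|\xi|^2\widehat u_0$, and this term reduces to $E_{1,1}\widehat u_0$, giving the $\ul$ line of \eqref{solou2}.

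The only delicate points are the sign and $\perp$ conventions and the use of Biot–Savart. The latter is legitimate because $\ul$ stays divergence-free and in $L^2$ (as noted after \eqref{sysL}), so $\ul$ is recovered from $\oml$ by \eqref{BS} for a.e.\ $\xi$. The set $\xi=0$ is negligible, and there $D=4\chi^2>0$, so no division by zero arises in the $E_{i,j}$.
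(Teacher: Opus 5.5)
Your proposal is correct and takes the same route as the paper, which simply states that the formula follows from the matrix exponential $e^{tA}$ of the preceding lemma together with the Biot--Savart law \eqref{BS}. You supply the details the paper leaves implicit: the identity $\widehat{\oml}(\xi,0)=i\xi^\perp\cdot\widehat u_0$, the cancellation of the factor $R$ in the $(1,2)$ entry against $|\xi|^{-2}$, and the use of $\xi\cdot\widehat u_0=0$ to get $\xi^\perp(\xi^\perp\cdot\widehat u_0)=|\xi|^2\widehat u_0$.
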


Let $K$ denote the following 3x3 matrix with complex coefficients:
\begin{equation}
\label{symbK}
K(\xi,t)=
\begin{pmatrix}
  E_{1,1}(\xi,t)& 0& i\xi_2 E_{1,2}(\xi,t)\\
0&  E_{1,1}(\xi,t)& -i\xi_1 E_{1,2}(\xi,t)\\
- i\xi_2 E_{2,1}(\xi,t)& i\xi_1 E_{2,1}(\xi,t)&E_{2,2}(\xi,t)
\end{pmatrix}
.
\end{equation}

Proposition \ref{plf} says that the matrix $K$ is the symbol of the operator $e^{tM}$:
\begin{equation*}
\zl(t)=e^{tM}z_0=\mathscr{F}^{-1}(K(\xi,t)\widehat{z_0}(\xi)).
\end{equation*}

Next, we use the explicit formulas for  $\widehat \ul$ and $\widehat \hl $ to derive precise asymptotics for $\ul$ and $\hl $.

The quantities $\mu$, $\chi$ and $\gamma$ being fixed, we consider $\alpha=\alpha(R)$, $\beta=\beta(R)$ and $ D = D (R)$ as being functions of the variable $R$ defined on $\R_+$. Recalling that $\mu$, $\chi$ are all strictly positive and $\gamma \geq 0$, it is easy to check that for all $R>0$ we have that
\begin{equation*}
 D >0,\quad \sqrt D \pm\beta>0,\quad \alpha\pm\sqrt D >0.
\end{equation*}
In addition,
\begin{equation*}
\alpha+\sqrt D \stackrel{R\to0}\sim 4\chi,\qquad
\alpha+\sqrt D \stackrel{R\to\infty}\sim\frac R2(\mu+\chi+\gamma+|\mu+\chi-\gamma|)
\end{equation*}
and since
\begin{equation*}
\alpha-\sqrt D =\frac{\alpha^2- D }{\alpha+\sqrt D }
=\frac{\alpha^2-\beta^2-4\chi^2R}{\alpha+\sqrt D }
=\frac{4\chi\mu R+\gamma(\chi+\mu)R^2}{\alpha+\sqrt D }
\end{equation*}
we also have that 
\begin{equation*}
\alpha-\sqrt D \stackrel{R\to0}\sim \mu R,
\end{equation*}
\begin{equation*}
\alpha-\sqrt D \stackrel{R\to\infty}\sim R\frac{2\gamma(\chi+\mu)}{\mu+\chi+\gamma+|\mu+\chi-\gamma|}\quad\text{if }\gamma>0
\end{equation*}
and
\begin{equation*}
\alpha-\sqrt D \stackrel{R\to\infty}\sim \frac{4\chi\mu}{\mu+\chi}\quad\text{if }\gamma=0.
\end{equation*}
We infer that there exists two constants $C_1=C_1(\mu,\chi,\gamma)$ and  $C_2=C_2(\mu,\chi,\gamma)$ such that, for all $R\geq0$, 
\begin{equation}\label{firstbound1}
C_1R\leq\alpha-\sqrt D \leq C_2R\quad\text{if }\gamma>0,
\end{equation}
\begin{equation}\label{firstbound2}
C_1\frac{R}{1+R}\leq\alpha-\sqrt D \leq C_2\frac{R}{1+R}\quad\text{if }\gamma=0
\end{equation}
and
\begin{equation}\label{firstbound3}
C_1(1+R)\leq\alpha+\sqrt D \leq C_2(1+R).
\end{equation}
Note that if $R\leq1$ then relation \eqref{firstbound1} holds also when $\gamma=0$ too. Moreover, we  have for all $\gamma\geq0$
\begin{equation}
\label{C3}
\alpha-\sqrt D\geq C_3\quad\text{for all }R>1
\end{equation}
for some constant $C_3=C_3(\mu,\chi)>0$.

Before obtaining the asymptotic behavior of the symbol of $e^{tM}$, let us first prove some upper bounds for this symbol and, as a consequence, a couple of semigroup estimates.
\begin{proposition}\label{sim-heat}
There exist two positive constants $c=c(\mu,\chi,\gamma)$ and $C=C(\mu,\chi,\gamma)$ such that:
\begin{enumerate}[label=\alph*)]
\item If $\gamma>0$ then $|K(\xi,t)|\leq C  e^{-ct|\xi|^2}$ for all $t\geq0$ and $\xi\in\R^2$. In addition, we have the $L^1-L^2$ estimate $\|\nabla^k e^{tM}z_0\|_{L^2}\leq C  t^{-\frac{1+k}2}\|z_0\|_{L^1}$.
\item If $\gamma\geq0$ then  $|K(\xi,t)|\leq C  e^{-ct\min(1,|\xi|^2)}$ for all $t\geq0$ and $\xi\in\R^2$. Moreover, we have the $L^2-L^2$ estimate $\|e^{tM}z_0\|_{L^2}\leq C  \|z_0\|_{L^2}$.
\end{enumerate}
\end{proposition}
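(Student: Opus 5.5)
The plan is to bound each entry of the symbol $K$ in \eqref{symbK} directly from the formulas \eqref{e11}--\eqref{e22}, using the two-sided bounds \eqref{firstbound1}--\eqref{C3} on the eigenvalues $-(\alpha\mp\sqrt D)$. Throughout, we use that for $R>0$ the quantities $\sqrt D\pm\beta$ are positive. Hence $0\le\frac{\sqrt D\pm\beta}{2\sqrt D}\le1$, and $E_{1,1}$, $E_{2,2}$ are convex combinations of $e^{-t(\alpha-\sqrt D)}$ and $e^{-t(\alpha+\sqrt D)}$. Since $\alpha+\sqrt D\ge\alpha-\sqrt D$, this gives
\[
|E_{1,1}|,|E_{2,2}|\le e^{-t(\alpha-\sqrt D)} .
\]

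The off-diagonal entries are $|\xi|E_{1,2}=\frac{\chi|\xi|}{\sqrt D}\big(e^{-t(\alpha-\sqrt D)}-e^{-t(\alpha+\sqrt D)}\big)$. Since $D\ge4\chi^2R$, we have $\chi|\xi|/\sqrt D\le\frac12$. Because the bracket is between $0$ and $e^{-t(\alpha-\sqrt D)}$, we get $|\xi||E_{1,2}|\le\frac12e^{-t(\alpha-\sqrt D)}$. Hence every entry of $K$ is bounded by $e^{-t(\alpha-\sqrt D)}$.

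It then remains to bound $\alpha-\sqrt D$ from below. If $\gamma>0$, \eqref{firstbound1} gives $\alpha-\sqrt D\ge C_1|\xi|^2$, which yields a). For any $\gamma\ge0$, \eqref{firstbound1} for $R\le1$ (valid also when $\gamma=0$) together with \eqref{C3} for $R>1$ gives $\alpha-\sqrt D\ge c\min(1,|\xi|^2)$, which yields b).

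The semigroup estimates follow by Plancherel. For b), $|K|\le C$ uniformly, so $\|e^{tM}z_0\|_{L^2}=(2\pi)^{-1}\|K\widehat z_0\|_{L^2}\le C\|z_0\|_{L^2}$. For the $L^1$--$L^2$ estimate in a), we use $\|\widehat z_0\|_{L^\infty}\le\|z_0\|_{L^1}$ and compute
\[
\|\nabla^ke^{tM}z_0\|_{L^2}\lesssim\Big(\int|\xi|^{2k}e^{-2ct|\xi|^2}\,d\xi\Big)^{1/2}\|z_0\|_{L^1}=C t^{-\frac{1+k}2}\|z_0\|_{L^1},
\]
by scaling $\xi\mapsto\xi/\sqrt t$ in $\R^2$.

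The only delicate point is the uniform control of the off-diagonal entries, which carry an extra factor $|\xi|$. Near $R=0$ one might fear that $\sqrt D$ degenerates, but $D\to16\chi^2$ there, and the inequality $D\ge4\chi^2R$ handles all $R$ at once. One small technicality is $R=0$: there $\sqrt D=|\beta|=2\chi$, $\beta<0$ and $\sqrt D+\beta=0$, so the positivity of $\sqrt D\pm\beta$ fails only in the weak sense. However, the formulas extend continuously and the bounds persist, so this causes no difficulty.
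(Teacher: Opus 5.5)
Your proposal is correct and follows essentially the same route as the paper: you bound every entry of $K$ by $e^{-t(\alpha-\sqrt D)}$ using $|\beta|\le\sqrt D$ and $\sqrt D\ge 2\chi|\xi|$, invoke the lower bounds \eqref{firstbound1}--\eqref{C3} on $\alpha-\sqrt D$, and conclude with Plancherel. One harmless slip: at $R=0$ one has $D=\beta^2=4\chi^2$, not $16\chi^2$, as your own closing remark ($\sqrt D=2\chi$) confirms; this plays no role in the argument.
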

\begin{proof}
Observe first from \eqref{notL} that $|\beta|\leq \sqrt D$ so $|\sqrt D\pm\beta|\leq 2\sqrt D$. Also $\sqrt D\geq 2\chi|\xi|$. From relations \eqref{e11}-\eqref{symbK} we deduce the following estimate for the kernel $K$:
\begin{equation*}
|K(\xi,t)|\leq C (|E_{1,1}(\xi,t)|+|E_{2,2}(\xi,t)|+|\xi||E_{1,2}(\xi,t)|)\leq C\big(e^{-(\alpha-\sqrt{ D })t}+e^{-(\alpha+\sqrt{ D })t}\big)\leq Ce^{-(\alpha-\sqrt{ D })t}.
\end{equation*}
Relations \eqref{firstbound1}-\eqref{firstbound2} imply the announced upper bounds for $K(\xi,t)$.  These bounds for $K$ and the Plancherel theorem easily imply the  $L^1-L^2$ and $L^2-L^2$ semigroup estimates:
\begin{equation*}
\|\nabla^k e^{tM}z_0\|_{L^2}=\frac1{2\pi}\||\xi|^k K(\xi,t)\widehat{z_0}(\xi)\|_{L^2}\leq C  \||\xi|^ke^{-ct|\xi|^2}\|_{L^2}\|\widehat{z_0}\|_{L^\infty}\leq C  t^{-\frac{1+k}2}\|z_0\|_{L^1}
\end{equation*}
if $\gamma>0$, and
\begin{equation*}
\|e^{tM}z_0\|_{L^2}=\frac1{2\pi}\|K(\xi,t)\widehat{z_0}(\xi)\|_{L^2}\leq C  \sup_\xi e^{-ct\min(1,|\xi|^2)} \|\widehat{z_0}\|_{L^2}\leq C  \|z_0\|_{L^2}
\end{equation*}
if $\gamma=0$. This completes the proof.
\end{proof}
\begin{remark}
It is clear from the proof that, in the case $\gamma>0$, we also have the slightly different estimate
\begin{equation}\label{pl1}
\Big\|\nabla^k e^{tM}{\footnotesize \Big(\begin{matrix}
\P \mathscr{v}\\ \mathscr{h}
\end{matrix}\Big)}\Big\|_{L^2}\leq C  t^{-\frac{1+k}2}(\|\mathscr{v}\|_{L^1}+\|\mathscr{h}\|_{L^1}).
\end{equation}
\end{remark}
More general $L^p$-$L^q$ estimates are available for $\gamma>0$
(essentially the same as for the heat semigroup), see \cite{Chen-Price},
but we will not need them here.

In the sequel, we will denote by $C$ a generic constant depending only on  $\mu$, $\chi$ and $\gamma$, whose value can change from one line to another. We will use several times the following equality:
\begin{equation}\label{sup}
\sup_{R\geq0}R^ke^{-t\nu R}=\Big(\frac{k}{e\nu t}\Big)^k.
\end{equation}
where $k\in\N^*$.

The following proposition gives the precise asymptotic behavior of $E_{1,1}$, $E_{2,1}$ and $E_{2,2}$ as $t\to\infty$.
\begin{proposition}\label{prop-e11}
There exist two positive constants $c=c(\mu,\chi,\gamma)$ and $C=C(\mu,\chi,\gamma)$ such that:
\begin{enumerate}[label=\alph*)]
\item if $\gamma>0$ then 
\begin{equation*}
|E_{1,1}(\xi,t)-e^{-\mu t|\xi|^2}|\leq \frac{C}{t}e^{-ct|\xi|^2}\qquad\forall t\geq 1,\ \forall\xi\in\R^2;
\end{equation*}
\item if $\gamma=0$ then 
\begin{equation*}
|E_{1,1}(\xi,t)-e^{-\mu t|\xi|^2}|\leq \frac{C}{t}e^{-ct|\xi|^2}+\frac{C}{1+|\xi|^2}e^{-ct}\qquad\forall t\geq 1,\ \forall\xi\in\R^2.
\end{equation*}
\end{enumerate}
\end{proposition}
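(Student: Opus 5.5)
We will work directly with the explicit formula \eqref{e11}, writing $\lambda_-=\alpha-\sqrt D$ and $\lambda_+=\alpha+\sqrt D$, and split
\begin{equation*}
E_{1,1}-e^{-\mu tR}=\Big(\frac{\sqrt D-\beta}{2\sqrt D}-1\Big)e^{-t\lambda_-}+\big(e^{-t\lambda_-}-e^{-\mu tR}\big)+\frac{\sqrt D+\beta}{2\sqrt D}e^{-t\lambda_+}.
\end{equation*}
Each of the three terms will be bounded separately. We will also separate low frequencies $R\le R_0$, with $R_0$ small, from high frequencies $R>R_0$.

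\textbf{Fast-decaying term.} Since $|\beta|\le\sqrt D$, the coefficient is at most $1$. By \eqref{firstbound3} the term is bounded by $e^{-C_1t(1+R)}$. For $t\ge1$ this is at most $\frac Ct e^{-ct R}$, which is acceptable in both cases.

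\textbf{First term.} Its coefficient equals $-\frac{\sqrt D+\beta}{2\sqrt D}$. For small $R$ we have $\beta\to-2\chi$ and $\sqrt D\to2\chi$. Moreover $\sqrt D+\beta=\frac{4\chi^2R}{\sqrt D-\beta}$, so the coefficient is $O(R)$ for $R\le R_0$, and it is bounded by $1$ everywhere.
\begin{itemize}
\item For $R\le R_0$ we use \eqref{firstbound1}, valid for small $R$ in both cases, to get $\lesssim Re^{-C_1tR}\le \frac Ct e^{-ctR}$ via \eqref{sup}.
\item For $R>R_0$ with $\gamma>0$, we have $e^{-t\lambda_-}\le e^{-C_1tR}\le \frac Ct e^{-ctR}$ (absorbing a factor $tR\ge tR_0$).
\item For $R>R_0$ with $\gamma=0$, we only have $\lambda_-\ge C_3'$ by \eqref{C3} and \eqref{firstbound2}. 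Here we need the extra $\frac1{1+R}$ gain, which comes from the coefficient itself. When $\gamma=0$, $\beta\sim\frac12(\mu+\chi)R>0$, so $\sqrt D-\beta=\frac{4\chi^2R}{\sqrt D+\beta}\sim \frac{4\chi^2}{\mu+\chi}$ is bounded, and $\frac{\sqrt D-\beta}{2\sqrt D}\lesssim \frac1{1+R}$. Thus we regroup for high frequencies: write $E_{1,1}=\frac{\sqrt D-\beta}{2\sqrt D}e^{-t\lambda_-}+\frac{\sqrt D+\beta}{2\sqrt D}e^{-t\lambda_+}$. The first piece is $\le\frac{C}{1+R}e^{-ct}$, the second is $\le e^{-ct(1+R)}$, and $e^{-\mu tR}\le e^{-\mu tR_0/2}e^{-\mu tR/2}\le\frac Ct e^{-ctR}$.
\end{itemize}
So in the case $\gamma=0$ the high-frequency region never uses the splitting; the same direct argument also works for $\gamma>0$.

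\textbf{Middle term, low frequencies.} This step is the main point. By the mean value theorem,
\begin{equation*}
|e^{-t\lambda_-}-e^{-\mu tR}|\le t|\lambda_--\mu R|e^{-t\min(\lambda_-,\mu R)}.
\end{equation*}
We need the expansion $\lambda_-=\mu R+O(R^2)$ near $0$. This follows from the formula $\lambda_-=\frac{4\chi\mu R+\gamma(\chi+\mu)R^2}{\alpha+\sqrt D}$ together with $\alpha+\sqrt D=4\chi+O(R)$, since $\alpha,\sqrt D$ are smooth in $R$ near $0$ (recall $D(0)=4\chi^2>0$). Combining this with \eqref{firstbound1} gives the bound $CtR^2e^{-ctR}\le\frac Ct e^{-c'tR}$, again by \eqref{sup}.

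The obstacle is just bookkeeping: choosing $R_0$ so that the expansions hold uniformly, and in the case $\gamma=0$ tracking the $\frac1{1+R}$ factor at high frequency. I expect no real difficulty beyond this.
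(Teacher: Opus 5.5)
Your proposal is correct and follows essentially the paper's route. The ingredients are the same: the coefficient identity $\frac{\sqrt D-\beta}{2\sqrt D}-1=-\frac{\sqrt D+\beta}{2\sqrt D}=O(R)$, the expansion $\alpha-\sqrt D=\mu R+O(R^2)$, the bound $\sup_R R^k e^{-\nu tR}\lesssim t^{-k}$, and, for $\gamma=0$ at high frequencies, $\frac{\sqrt D-\beta}{2\sqrt D}\lesssim\frac1{1+R}$ together with $\alpha-\sqrt D\ge C_3$.

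The only difference is technical. You split at a fixed $R_0$ and use $|e^{-a}-e^{-b}|\le|a-b|e^{-\min(a,b)}$. The paper instead splits at $tR^2=1$ and expands $e^{O(tR^2)}-1=O(tR^2)$, which requires $tR^2$ bounded. Your version is slightly cleaner, and it also makes explicit the easy bound on $e^{-\mu tR}$ away from low frequencies, which the paper leaves implicit.
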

\begin{proof}
We observe first that, due to the inequality $|\beta|\leq\sqrt D $, we have
\begin{equation}\label{bound2}
0\leq\frac{\pm\beta+\sqrt D }{2\sqrt D }\leq 1.
\end{equation}

The functions $\beta$ and $ D $ are $C^1$ in $R$ and $ D >0$ so the function
\begin{equation*}
\R_+\ni R\mapsto\frac{-\beta+\sqrt D }{2\sqrt D }
\end{equation*}
is $C^1$. One can check that its value in $R=0$ is 1 and that it is bounded as $R\to\infty$. We infer that there exists a constant $C=C(\mu,\chi,\gamma)$ such that
\begin{equation}\label{bound1}
\big|\frac{-\beta+\sqrt D }{2\sqrt D }-1\big|\leq CR
\end{equation}
for all $R\geq0$. 

Next, we use the Taylor expansion of the square root $\sqrt{1+s}=1+\frac{s}2+O(s^2)$ as $s\to0$ to write
\begin{equation}\label{exp}
\begin{aligned}
\alpha-\sqrt D 
&=\frac12(\mu+\chi+\gamma)R+2\chi-\sqrt{\big[\frac12(\mu+\chi-\gamma)R-2\chi\big]^2+4\chi^2R}\\
&=\frac12(\mu+\chi+\gamma)R+2\chi-2\chi\sqrt{1+R\frac{\chi+\gamma-\mu}{2\chi}+R^2\frac{(\mu+\chi-\gamma)^2}{16\chi^2}}\\
&=\frac12(\mu+\chi+\gamma)R+2\chi-2\chi(1+R\frac{\chi+\gamma-\mu}{4\chi}+O(R^2))\\
&=\mu R+O(R^2)
\end{aligned}
\end{equation}
as $R\to0$. Since $\alpha$ and $\sqrt D $ grow at most linearly at infinity, the relation above holds true for all $R\in\R_+$ and not only when $R\to0$.

We bound first the second term on the LHS of \eqref{e11}. Using \eqref{bound2} and \eqref{firstbound3} we can estimate
\begin{equation*}
\frac{\beta+\sqrt{ D }}{2\sqrt{ D }}e^{-(\alpha+\sqrt{ D })t}
\leq e^{-(\alpha+\sqrt{ D })t}
\leq e^{-C_1(1+R)t}
= e^{-C_1t}e^{-C_1tR}
\leq \frac Ct e^{-C_1 tR}.
\end{equation*}

We now go to the first term on the LHS of \eqref{e11}. The estimate is different, depending on $\gamma$ being 0 or not.

Assume first that $\gamma>0$. We consider two cases.

If $tR^2>1$ then
\begin{equation*}
\frac{-\beta+\sqrt{ D }}{2\sqrt{ D }}e^{-(\alpha-\sqrt{ D })t}
\leq  e^{-(\alpha-\sqrt{ D })t}
\leq  e^{-C_1Rt}
= e^{-\frac{C_1Rt}2}e^{-\frac{C_1Rt}2}
\leq e^{-\frac{C_1\sqrt t}2}  e^{-\frac{C_1Rt}2}
\leq \frac Ct e^{-\frac{C_1Rt}2}
\end{equation*}
where we used \eqref{firstbound1}.

If $tR^2\leq1$ we write
\begin{align*}
\frac{-\beta+\sqrt{ D }}{2\sqrt{ D }}e^{-(\alpha-\sqrt{ D })t}
&=\frac{-\beta+\sqrt{ D }}{2\sqrt{ D }}e^{-t\mu R+O(tR^2)}\\
&=\frac{-\beta+\sqrt{ D }}{2\sqrt{ D }}e^{-t\mu R}
+\frac{-\beta+\sqrt{ D }}{2\sqrt{ D }}e^{-t\mu R}(e^{O(tR^2)}-1)\\
&=e^{-t\mu R}+O(R)e^{-t\mu R}+O(tR^2)e^{-t\mu R}\\
&=e^{-t\mu R}+O(R)e^{-\frac{t\mu R}2}e^{-\frac{t\mu R}2}+O(tR^2)e^{-\frac{t\mu R}2}e^{-\frac{t\mu R}2}
\end{align*}
where we also used \eqref{bound1} and \eqref{bound2}. We deduce from \eqref{sup} that
\begin{equation*}
O(R)e^{-\frac{t\mu R}2}=O\big(\frac1t\big)\quad\text{and}\quad O(tR^2)e^{-\frac{t\mu R}2}=O\big(\frac1t\big)
\end{equation*}
so
\begin{equation*}
\frac{-\beta+\sqrt{ D }}{2\sqrt{ D }}e^{-(\alpha-\sqrt{ D })t}=e^{-t\mu R}+O\big(\frac1t\big)e^{-\frac{t\mu R}2}.
\end{equation*}

The conclusion follows after we recall that $R=|\xi|^2$.

We assume now that $\gamma=0$. In this case, \eqref{firstbound1} is no longer true and must be replaced by \eqref{firstbound2}. But if $R\leq 1$, then \eqref{firstbound1} holds true also when $\gamma=0$. We infer that if $R\leq1$ then the argument above goes through, so it suffices to assume $R>1$. Observe that $\beta\sim\frac{\mu+\chi}2R$ and $\sqrt D\sim\frac{\mu+\chi}2R$ as $R\to\infty$. We infer that
\begin{equation*}
\frac{-\beta+\sqrt{ D }}{2\sqrt{ D }}
=\frac{D-\beta^2}{2\sqrt D(\sqrt D+\beta)}
=\frac{4\chi^2R}{2\sqrt D(\sqrt D+\beta)}
\stackrel{R\to\infty}\sim\frac{4\chi^2}{(\mu+\chi)^2R}
\end{equation*}
so
\begin{equation*}
\frac{-\beta+\sqrt{ D }}{2\sqrt{ D }}\leq\frac CR\quad\text{for all } R>1.
\end{equation*}
Recalling \eqref{C3} we can  bound the first term on the LHS of \eqref{e11} as follows:
\begin{equation*}
\frac{-\beta+\sqrt{ D }}{2\sqrt{ D }}e^{-(\alpha-\sqrt{ D })t}
\leq \frac CR e^{-(\alpha-\sqrt{ D })t}
\leq \frac CR e^{-C_3t}.
\end{equation*}
This completes the proof.
\end{proof}

A similar analysis can be carried out to deduce the asymptotics of the term $E_{2,2}$.

\begin{proposition}\label{prop-e22}
There exist two positive constants $c=c(\mu,\chi,\gamma)$ and $C=C(\mu,\chi,\gamma)$ such that:
\begin{enumerate}[label=\alph*)]
\item if $\gamma>0$
\begin{equation*}
\big|E_{2,2}(\xi,t)-\frac{|\xi|^2}4e^{-\mu t|\xi|^2}\big|\leq \frac{C}{t^2}e^{-c t|\xi|^2}\qquad\forall t\geq 1,\ \forall\xi\in\R^2;
\end{equation*}
\item if $\gamma=0$
\begin{equation*}
\big|E_{2,2}(\xi,t)-\frac{|\xi|^2}4e^{-\mu t|\xi|^2}\big|\leq \frac{C}{t^2}e^{-c t|\xi|^2}+Ce^{-c t}\qquad\forall t\geq 1,\ \forall\xi\in\R^2.
\end{equation*}
\end{enumerate}

\end{proposition}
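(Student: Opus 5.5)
I will mimic the proof of Proposition \ref{prop-e11}, splitting $E_{2,2}$ from \eqref{e22} into its two exponential pieces:
\[
E_{2,2}=\frac{\sqrt D+\beta}{2\sqrt D}\,e^{-(\alpha-\sqrt D)t}+\frac{\sqrt D-\beta}{2\sqrt D}\,e^{-(\alpha+\sqrt D)t}.
\]
\textbf{Fast mode.} The second piece is harmless. By \eqref{bound2} and \eqref{firstbound3} it is bounded by $e^{-C_1t}e^{-C_1tR}\le \frac{C}{t^2}e^{-C_1tR}$ for $t\ge1$, for every $\gamma\ge0$.

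\textbf{Small-$R$ expansion of the slow mode.} Everything then rests on the coefficient of the slow mode. Write
\[
\frac{\sqrt D+\beta}{2\sqrt D}=1-\frac{\sqrt D-\beta}{2\sqrt D}.
\]
By \eqref{bound1} this equals $O(R)$, but I need the sharper expansion $\frac{R}{4}+O(R^2)$. To get it, use
\[
\frac{\sqrt D+\beta}{2\sqrt D}=\frac{D-\beta^2}{2\sqrt D(\sqrt D-\beta)}=\frac{4\chi^2R}{2\sqrt D(\sqrt D-\beta)}.
\]
As $R\to0$ we have $\beta\to-2\chi$ and $\sqrt D\to2\chi$, so the denominator tends to $2\cdot2\chi\cdot4\chi=16\chi^2$. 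The quotient is therefore $\frac R4+O(R^2)$, since it is a $C^1$ (indeed smooth) function of $R$ near $0$. It is also bounded by $1$ for all $R$, so $\bigl|\frac{\sqrt D+\beta}{2\sqrt D}-\frac R4\bigr|\le CR^2$ holds globally on $R\le1$. On $R\ge1$ the same bound holds trivially with $C$ large, since the left side is at most $1+R/4\le CR^2$.

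\textbf{Case $\gamma>0$.} Split according to whether $tR^2>1$ or $tR^2\le1$, as in Proposition \ref{prop-e11}.

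If $tR^2>1$, both the slow piece and $\frac R4e^{-\mu tR}$ are bounded by $(1+R)e^{-cRt}$, using \eqref{firstbound1}. Then $Re^{-cRt/2}\le C/t$ and $e^{-cRt/2}\le e^{-c\sqrt t/2}\le C/t$, which gives a bound $\frac{C}{t^2}e^{-cRt/2}$.

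If $tR^2\le1$, use \eqref{exp}, i.e.\ $\alpha-\sqrt D=\mu R+O(R^2)$, so $e^{-(\alpha-\sqrt D)t}=e^{-\mu tR}(1+O(tR^2))$ with $O(tR^2)$ bounded. Then
\[
\frac{\sqrt D+\beta}{2\sqrt D}\,e^{-(\alpha-\sqrt D)t}-\frac R4e^{-\mu tR}=\bigl[O(R^2)+O(tR^3)\bigr]e^{-\mu tR}.
\]
By \eqref{sup}, $R^2e^{-\mu tR/2}\lesssim t^{-2}$ and $tR^3e^{-\mu tR/2}\lesssim t^{-2}$. This yields $\frac{C}{t^2}e^{-\mu tR/2}$.

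\textbf{Case $\gamma=0$.} For $R\le1$, \eqref{firstbound1} still holds, so the argument above applies verbatim. For $R>1$, bound the slow piece crudely by $e^{-C_3t}$ using \eqref{bound2} and \eqref{C3}. Also $\frac R4e^{-\mu tR}\le Ce^{-\mu t/2}$, since $Re^{-\mu tR/2}\le C$ and $e^{-\mu tR/2}\le e^{-\mu t/2}$ for $R>1$. This produces the extra term $Ce^{-ct}$.

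The main obstacle is the second-order expansion of the coefficient $\frac{\sqrt D+\beta}{2\sqrt D}$, i.e.\ identifying $\frac R4$ and controlling the remainder by $R^2$. The rewriting via $D-\beta^2=4\chi^2R$ handles this cleanly. The rest consists of routine applications of \eqref{sup}.
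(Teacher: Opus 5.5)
Your proposal is correct and follows the paper's proof essentially step by step. It uses the same split of $E_{2,2}$ into fast and slow modes, the same identity $\frac{\sqrt D+\beta}{2\sqrt D}=\frac{4\chi^2R}{2\sqrt D(\sqrt D-\beta)}$ to get $\frac R4+O(R^2)$, the same dichotomy $tR^2>1$ versus $tR^2\le1$ via \eqref{exp} and \eqref{sup}, and the same use of \eqref{C3} for $R>1$ when $\gamma=0$. You are slightly more careful than the paper in that you explicitly bound $\frac R4e^{-\mu tR}$ in the regimes $tR^2>1$ and $R>1$. In the $tR^2>1$ step, split the exponential once more to obtain the full $t^{-2}$ while keeping a Gaussian factor, e.g.\ $Re^{-cRt}\le \bigl(Re^{-cRt/4}\bigr)\bigl(e^{-c\sqrt t/4}\bigr)e^{-cRt/2}$.
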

\begin{proof}
As in the previous proof, we can bound 
\begin{equation}\label{firsterm}
\frac{-\beta+\sqrt{ D }}{2\sqrt{ D }}e^{-(\alpha+\sqrt{ D })t}
\leq e^{-(\alpha+\sqrt{ D })t}
\leq \frac C{t^2} e^{-C_1 tR}.
\end{equation}

It remains to bound the term
\begin{equation}\label{term}
\frac{\beta+\sqrt{ D }}{2\sqrt{ D }}e^{-(\alpha-\sqrt{ D })t}.
\end{equation}

We start by writing
\begin{equation*}
\frac{\beta+\sqrt{ D }}{2\sqrt{ D }}
=\frac{ D -\beta^2}{(\sqrt{ D }-\beta)2\sqrt{ D }}
=\frac{4\chi^2R}{(\sqrt{ D }-\beta)2\sqrt{ D }}
\stackrel{R\to0}\sim\frac R{4}
\end{equation*}
where we used that at $R=0$, we have $ D =4\chi^2$ and $\beta=-2\chi$. Taking also into account that $\frac{\beta+\sqrt{ D }}{2\sqrt{ D }}$ is bounded by 1, we obtain that
\begin{equation}\label{bound3}
\frac{\beta+\sqrt{ D }}{2\sqrt{ D }}=\frac R4+O(R^2)\quad\text{for all }R\geq0.
\end{equation}

We now estimate the term in \eqref{term}. 

Assume first the case $\gamma>0$.

If $tR^2>1$ then we use \eqref{exp} to bound the term in \eqref{term} as follows:
\begin{equation*}
\frac{\beta+\sqrt{ D }}{2\sqrt{ D }}e^{-(\alpha-\sqrt{ D })t}
\leq e^{-(\alpha-\sqrt{ D })t}
\leq e^{-C_1Rt}
=e^{-\frac{C_1Rt}2}e^{-\frac{C_1Rt}2}
\leq e^{-C_1\sqrt t}e^{-\frac{C_1Rt}2}
\leq \frac C{t^2}e^{-\frac{C_1Rt}2}.
\end{equation*}
Assume now that $tR^2\leq1$. We use \eqref{bound3} and \eqref{exp}:
\begin{align*}
\frac{\beta+\sqrt{ D }}{2\sqrt{ D }}e^{-(\alpha-\sqrt{ D })t}
&=\frac{\beta+\sqrt{ D }}{2\sqrt{ D }}e^{-t\mu R+O(tR^2)}\\
&=\frac{\beta+\sqrt{ D }}{2\sqrt{ D }}e^{-t\mu R}
+\frac{\beta+\sqrt{ D }}{2\sqrt{ D }}e^{-t\mu R}(e^{O(tR^2)}-1)\\
&=\frac R4 e^{-t\mu R}+O(R^2)e^{-t\mu R}+\frac{\beta+\sqrt{ D }}{2\sqrt{ D }}e^{-t\mu R}O(tR^2).
\end{align*}
Since $R\leq1/\sqrt t\leq 1$, from \eqref{bound3} we infer that $\frac{\beta+\sqrt{ D }}{2\sqrt{ D }}=O(R)$. Using \eqref{sup} we get that
\begin{equation*}
O(R^2)e^{-t\mu R}=O(R^2)e^{-\frac{t\mu R}2}e^{-\frac{t\mu R}2}
\leq \frac C{t^2}e^{-\frac{t\mu R}2}
\end{equation*}
and, similarly,
\begin{equation*}
\frac{\beta+\sqrt{ D }}{2\sqrt{ D }}e^{-t\mu R}O(tR^2)
=e^{-t\mu R}O(tR^3)\leq \frac C{t^2}e^{-\frac{t\mu R}2}.
\end{equation*}

We finally obtain that
\begin{equation*}
\Big|\frac{\beta+\sqrt{ D }}{2\sqrt{ D }}e^{-(\alpha-\sqrt{ D })t}-\frac R4 e^{-t\mu R}\Big|\leq \frac C{t^2}e^{-\frac{t\mu R}2}
\end{equation*}
for all $t\geq1$ and $tR^2\leq1$. 

The conclusion follows after recalling \eqref{firsterm}, the definition of $E_{2,2}$ given in \eqref{e22} and the relation $R=|\xi|^2$.

It remains to bound the term displayed in \eqref{term} in the case $\gamma=0$. We can assume that $R>1$ since when $R\leq 1$ the argument above goes through as in the proof of the previous proposition (by noticing that if $R\leq1$ then \eqref{firstbound1} holds true also when $\gamma=0$). If $\gamma=0$ and $R>1$ we recall that $\frac{\beta+\sqrt{ D }}{2\sqrt{ D }}\leq1$ and that $\alpha-\sqrt D\geq C_3$. We use this to bound the term in \eqref{term} as follows:
\begin{equation*}
\frac{\beta+\sqrt{ D }}{2\sqrt{ D }}e^{-(\alpha-\sqrt{ D })t}
\leq e^{-(\alpha-\sqrt{ D })t}
\leq e^{-C_3t}.
\end{equation*}
\end{proof}

We finish with the asymptotics for $E_{2,1}$.

\begin{proposition}\label{prop-e21}
There exist two positive constants $c=c(\mu,\chi,\gamma)$ and $C=C(\mu,\chi,\gamma)$ such that:
\begin{enumerate}[label=\alph*)]
\item if $\gamma>0$
\begin{equation*}
\big|E_{2,1}(\xi,t)-\frac12e^{-\mu t|\xi|^2}\big|\leq \frac{C}{t}e^{-c t|\xi|^2}\qquad\forall t\geq 1,\ \forall\xi\in\R^2;
\end{equation*}
\item if $\gamma=0$
\begin{equation*}
\big|E_{2,1}(\xi,t)-\frac12e^{-\mu t|\xi|^2}\big|\leq \frac{C}{t}e^{-c t|\xi|^2}+\frac{C}{1+|\xi|^2}e^{-c t}\qquad\forall t\geq 1,\ \forall\xi\in\R^2.
\end{equation*}
\end{enumerate}
\end{proposition}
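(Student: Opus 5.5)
We follow the scheme of the proofs of Propositions~\ref{prop-e11} and~\ref{prop-e22}. By \eqref{e21},
\[
E_{2,1}=\frac{\chi}{\sqrt D}e^{-(\alpha-\sqrt D)t}-\frac{\chi}{\sqrt D}e^{-(\alpha+\sqrt D)t}.
\]
The first step is to record the behaviour of the prefactor $\chi/\sqrt D$. At $R=0$ we have $D=4\chi^2$, so $\chi/\sqrt D=\frac12$ there. Moreover $\sqrt D\ge 2\chi|\xi|$, and $D$ is smooth and positive on $\R_+$. Since $D\ge\beta^2$ and $\beta$ grows linearly, we get $\sqrt D\gtrsim 1+R$. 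Hence
\[
\Big|\frac{\chi}{\sqrt D}-\frac12\Big|\le CR\quad\text{for all }R\ge0,
\qquad
\frac{\chi}{\sqrt D}\le\frac{C}{1+R}.
\]

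The second term is easy. Using \eqref{firstbound3} we bound $\frac{\chi}{\sqrt D}e^{-(\alpha+\sqrt D)t}\le Ce^{-C_1t}e^{-C_1tR}\le \frac Ct e^{-C_1tR}$, exactly as in the previous proofs.

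For the first term with $\gamma>0$, we split according to the size of $tR^2$.
\begin{itemize}
\item If $tR^2>1$, then \eqref{firstbound1} gives $e^{-(\alpha-\sqrt D)t}\le e^{-C_1\sqrt t/2}e^{-C_1Rt/2}\le \frac Ct e^{-C_1Rt/2}$. The term $\frac12 e^{-\mu tR}$ is bounded the same way, because $\mu tR\ge \mu\sqrt t$.
\item If $tR^2\le1$, we use \eqref{exp} to write $e^{-(\alpha-\sqrt D)t}=e^{-t\mu R}e^{O(tR^2)}$, with $e^{O(tR^2)}-1=O(tR^2)$. This yields
\[
\frac{\chi}{\sqrt D}e^{-(\alpha-\sqrt D)t}=\frac12e^{-t\mu R}+O(R)e^{-t\mu R}+O(tR^2)e^{-t\mu R}.
\]
By \eqref{sup}, both error terms are $O(1/t)\,e^{-t\mu R/2}$.
\end{itemize}
Together with the bound on the second term, this gives part (a) with $c=\min(C_1,\mu)/2$.

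For $\gamma=0$, the range $R\le1$ is handled by the same argument, because \eqref{firstbound1} still holds there. For $R>1$ we use $\frac{\chi}{\sqrt D}\le\frac{C}{1+R}$ together with \eqref{C3}. This gives the estimate
\[
\frac{\chi}{\sqrt D}e^{-(\alpha-\sqrt D)t}\le \frac{C}{1+|\xi|^2}e^{-C_3t}.
\]
We also have $\frac12e^{-\mu tR}\le \frac12 e^{-\mu tR/2}e^{-\mu t/2}\le \frac Ct e^{-\mu tR/2}$, so that term is absorbed into the first error term. This gives part (b).

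The only point that needs a little care is the prefactor bound: the uniform estimate $|\chi/\sqrt D-\frac12|\le CR$, and the decay $\chi/\sqrt D\lesssim (1+R)^{-1}$ at infinity when $\gamma=0$. Both follow from smoothness of $D$ near $0$ and linear growth of $\sqrt D$.
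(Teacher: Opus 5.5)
Your proposal is correct and follows essentially the paper's argument: the expansion $\chi/\sqrt D=\frac12+O(R)$, the splitting into $tR^2\le1$ and $tR^2>1$ when $\gamma>0$, the easy bound on the $e^{-(\alpha+\sqrt D)t}$ term, and, for $\gamma=0$ and $R>1$, the bound $\chi/\sqrt D\le C/R$ combined with \eqref{C3}. One small caveat: your general claim $\sqrt D\gtrsim 1+R$ is false when $\gamma=\mu+\chi$, since then $\beta\equiv-2\chi$ and $\sqrt D\sim2\chi\sqrt R$; however, you only use it for $\gamma=0$, where it holds, and the estimate $|\chi/\sqrt D-\frac12|\le CR$ needs only that $\chi/\sqrt D$ is bounded, which follows from $D>0$ and $D\ge4\chi^2R$.
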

\begin{proof}
Recall that 
\begin{equation*}
E_{2,1}=\frac{\chi}{\sqrt D }\big(e^{-(\alpha-\sqrt{ D })t}-e^{-(\alpha+\sqrt{ D })t}\big).
\end{equation*}
One can check that
\begin{equation*}
\frac{\chi}{\sqrt D }\stackrel{R\to0}\sim\frac12\quad\text{and}\quad \frac{\chi}{\sqrt D }\stackrel{R\to\infty}\longrightarrow0
\end{equation*}
so
\begin{equation*}
\frac{\chi}{\sqrt D }=\frac12+O(R)
\end{equation*}
for all $R\geq0$. Moreover $\frac{\chi}{\sqrt D }$ is bounded on $\R_+$.

We consider first the case $\gamma>0$.

With the same arguments as before we can show that
\begin{equation*}
\frac{\chi}{\sqrt D }e^{-(\alpha-\sqrt{ D })t}
=\big(\frac12+O(R)\big)e^{-t\mu R+O(tR^2)}
=\frac12 e^{-t\mu R}+O\big(\frac1t\big)e^{-\frac{t\mu R}2}
\end{equation*}
and
\begin{equation*}
\frac{\chi}{\sqrt D }e^{-(\alpha+\sqrt{ D })t}\leq Ce^{-(\alpha+\sqrt{ D })t}
\leq \frac C{t^2} e^{-C_1 tR}.
\end{equation*}
The conclusion follows in the case $\gamma>0$.

We consider now the case $\gamma=0$. As in the two previous proofs, it suffices to assume $R>1$. We observe that 
\begin{equation*}
\frac{\chi}{\sqrt D }\stackrel{R\to\infty}\sim\frac{2\chi}{(\mu+\chi)R}
\end{equation*}
so
\begin{equation*}
\frac{\chi}{\sqrt D }\leq\frac CR\quad\text{for all }R>1.
\end{equation*}
Then, for all $R>1$, we can use \eqref{C3} to estimate
\begin{equation*}
\frac{\chi}{\sqrt D }e^{-(\alpha-\sqrt{ D })t}\leq \frac CR e^{-C_3t}.
\end{equation*}
The proof is completed.
\end{proof}

As an application of the previous propositions, we can now establish
Theorem~\ref{th:linear}, about the asymptotic profile of the solution
$(\ul,\hl )$ to the linear system in \eqref{sysL}. 
We can now prove the main result of this section.

\begin{proof}[Proof of Theorem~\ref{th:linear}]
To estimate $\ul$ we use the first line of \eqref{solou2} and the Plancherel theorem:
\begin{align*}
\big\|\ul-e^{\mu t\Delta}u_0+\frac12\nabla^\perp e^{\mu t\Delta}h_0\big\|_{L^2} 
&=C\big\|\widehat \ul-e^{-\mu t|\xi|^2}\widehat{u_0}+\frac12e^{-\mu t|\xi|^2}i\xi^\perp \widehat{h_0}\big\|_{L^2}\\
&=C\big\|(E_{1,1}-e^{-\mu t|\xi|^2})\widehat{u_0}-(E_{2,1}-\frac12e^{-\mu t|\xi|^2})i\xi^\perp \widehat{h_0}\big\|_{L^2}\\
&\leq C\sup_\xi|E_{1,1}-e^{-\mu t|\xi|^2}|\, \|\widehat{u_0}\|_{L^2}+\sup_\xi(|\xi||E_{2,1}-\frac12e^{-\mu t|\xi|^2}|)\|\widehat{h_0}\|_{L^2}.
\end{align*}

For both cases $\gamma=0$ and $\gamma>0$  we use Propositions \ref{prop-e11} and \ref{prop-e21} to bound
\begin{equation*}
C\sup_\xi|E_{1,1}-e^{-\mu t|\xi|^2}|
\leq \frac Ct \sup_\xi e^{-ct|\xi|^2} +\sup_\xi\frac{C}{1+|\xi|^2}e^{-ct}\leq \frac Ct 
\end{equation*}
and
\begin{equation*}
\sup_\xi(|\xi||E_{2,1}-\frac12e^{-\mu t|\xi|^2}|)
\leq \frac Ct \sup_\xi (|\xi|e^{-ct|\xi|^2})
+\sup_\xi \frac{C|\xi|}{1+|\xi|^2}e^{-c t}
\leq \frac C{t^{\frac32}}
\end{equation*}
so
\begin{equation*}
\big\|\ul-e^{\mu t\Delta}u_0+\frac12\nabla^\perp e^{\mu t\Delta}h_0\big\|_{L^2} \leq \frac Ct \|u_0\|_{L^2}+\frac C{t^{\frac32}} \|h_0\|_{L^2}.
\end{equation*}

Similarly, we use  the second line of \eqref{solou2} together with Propositions \ref{prop-e22} and \ref{prop-e21} to estimate
\begin{align*}
\big\|\hl -\frac12\curl e^{\mu t\Delta}u_0+&\frac14\Delta e^{\mu t\Delta}h_0\big\|_{L^2}  
=C\big\|\widehat \hl -\frac12e^{-\mu t|\xi|^2}i\xi^\perp\cdot \widehat{u_0}-\frac14|\xi|^2 e^{-\mu t|\xi|^2}\widehat{h_0}\big\|_{L^2} \\
&=C\big\|(E_{2,1}-\frac12e^{-\mu t|\xi|^2})i\xi^\perp\cdot  \widehat{u_0}+(E_{2,2}-\frac14|\xi|^2 e^{-\mu t|\xi|^2})\widehat{h_0}\|_{L^2}\\
&\leq \sup_\xi(|\xi||E_{2,1}-\frac12e^{-\mu t|\xi|^2}|)\, \|\widehat{u_0}\|_{L^2}+C\sup_\xi|E_{2,2}-\frac14|\xi|^2e^{-\mu t|\xi|^2}|\, \|\widehat{h_0}\|_{L^2}\\
&\leq \frac C{t^{\frac32}} \|\widehat{u_0}\|_{L^2}+\big[\frac C{t^2} \sup_\xi e^{-ct|\xi|^2}+Ce^{-c t}\big]\|\widehat{h_0}\|_{L^2}\\
&\leq \frac C{t^{\frac32}} \|u_0\|_{L^2}+\frac C{t^2} \|h_0\|_{L^2}.
\end{align*}
This completes the proof.
\end{proof}

\section{Proof of Theorem~\ref{th:decay}} 
\label{sec:decay}

This section aims to show Theorem~\ref{th:decay}. We first need to prove some technical results.

The following lemma is proved in \cite{Brando-Revista}. 
\begin{lemma}\label{Brando_Revista}
Let $g(t), y(t)$ and $\beta(t)$ be three functions defined on $[0,\infty)$ such that $g$ is continuous, $y \in L^\infty_{\text{loc}}(\mathbb{R}^+)$ and $\beta \in L^1_{\text{loc}}(\mathbb{R}^+)$. Assume that (after a suitable modification of the values of
$y(t)$ on a set of measure zero)
\[
y(t)+\int_{s}^{t} g(r)^{2} y(r) d r \leq y(s)+\int_{s}^{t} \beta(r) d r
\]
holds for $s = 0$, a.e $s>0$ and all $t \geq s$. Let also $e(t)=\exp \left(\int_{0}^{t} g(r)^{2} d r\right)$. Then, 
\[ 
y(t) e(t) \leq y(0)+\int_{0}^{t} e(r) \beta(r) d r \quad \text { for all } t \geq 0.
\]
\end{lemma}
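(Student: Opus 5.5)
The natural approach is a Gronwall-type argument with an integrating factor. The only subtlety is that the integral inequality is assumed for $s=0$ and almost every $s>0$, not for every $s$. I would first reduce to a function of the single variable $t$. Set $G(r)=g(r)^2\ge0$, which is continuous, and $e(t)=\exp(\int_0^tG)$, which is $C^1$ and increasing with $e'=Ge$. Consider
\[
Y(t)=y(t)-\int_0^t\beta(r)\dd r+\int_0^t G(r)y(r)\dd r .
\]
Then the hypothesis says exactly that $Y(t)\le Y(s)$ for $s=0$, for a.e. $s>0$, and for all $t\ge s$. So $Y$ is "almost" nonincreasing.

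First step: show that $Y$ is nonincreasing in the distributional sense. For $0<s<t$ with $s$ outside the exceptional null set $N$, we have $Y(t)\le Y(s)$. Hence for a.e. pair $s<t$ the inequality holds, and $Y$ coincides a.e. with a nonincreasing function $\tilde Y$. One can take $\tilde Y(t)=\inf\{Y(s):s\le t,\ s\notin N\}$, together with $s=0$. Note that $\tilde Y(t)\ge Y(t)$ for every $t$ (taking $t$ itself if $t\notin N$; otherwise use the hypothesis with $s\notin N$, $s<t$). Also $\tilde Y\le Y(0)$. So $Y'=\nu$ is a nonpositive measure on $(0,\infty)$, and distributionally $y'=\nu+\beta-Gy$.

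Second step: multiply by the integrating factor. Distributionally,
\[
(ey)'=e\,y'+Ge\,y=e\nu+e\beta ,
\]
which is legitimate since $e$ is $C^1$ and $y\in L^\infty_{loc}$. As $e>0$, $e\nu\le0$, and hence $ey-\int_0^te\beta$ is a.e. equal to a nonincreasing function.

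Third step, which I expect to be the main obstacle: upgrade this a.e. statement to all $t\ge0$ and anchor it at $t=0$. For the anchor at $0$ I would avoid distributions altogether. I would discretize instead: take a partition $0=s_0<s_1<\dots<s_n=t$ with $s_1,\dots,s_{n-1}\notin N$ and small mesh. Apply the hypothesis on each $[s_{k},s_{k+1}]$ and multiply by $e(s_k)$. Using $\int_{s_k}^{s_{k+1}}Gy\approx(e(s_{k+1})-e(s_k))y(s_{k+1})/e(s_k)$ up to errors controlled by continuity of $G$ and the local boundedness of $y$, the resulting inequalities telescope. In the mesh limit this gives
\[
e(t)y(t)\le y(0)+\int_0^te\beta+o(1),
\]
valid for every $t$, because the last point $t$ enters only through the hypothesis as the right endpoint. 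This requires no restriction on $t$. The error terms have the form $\sum\int_{s_k}^{s_{k+1}}|e(r)-e(s_k)|(|G y|+|\beta|)$, which tends to $0$ by uniform continuity of $e$ on $[0,t]$ and integrability of $Gy$ and $\beta$. I would carry out this Riemann-sum argument carefully, writing the one-step inequality with the exact identity $e(s_{k+1})-e(s_k)=\int_{s_k}^{s_{k+1}}Ge$.
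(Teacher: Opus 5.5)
The paper gives no argument of its own for this lemma (it refers to \cite{Brando-Revista}), so your proposal has to stand on its own. It has a gap in Step~3, at exactly the point you flagged as the main obstacle. Multiply the one-step inequality on $[s_k,s_{k+1}]$ by $e(s_k)$ and use $e(s_{k+1})-e(s_k)=\int_{s_k}^{s_{k+1}}Ge$. This gives
\[
e(s_{k+1})y(s_{k+1})-e(s_k)y(s_k)\le\int_{s_k}^{s_{k+1}}e\beta+\int_{s_k}^{s_{k+1}}\bigl(e(s_k)-e\bigr)\beta+\int_{s_k}^{s_{k+1}}Gy\bigl(e-e(s_k)\bigr)+\int_{s_k}^{s_{k+1}}G(r)e(r)\bigl(y(s_{k+1})-y(r)\bigr)\,dr .
\]
The two middle terms are the errors you list, and they do vanish in the mesh limit. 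The last term is not of that form. It is also not controlled by continuity of $G$ and local boundedness of $y$: those give at best a bound of order $t\max_{[0,t]}(Ge)\sup|y|$ after summing over $k$, which does not shrink with the mesh. Your approximation $\int_{s_k}^{s_{k+1}}Gy\approx y(s_{k+1})\int_{s_k}^{s_{k+1}}Ge/e(s_k)$ amounts to claiming that right-endpoint Riemann sums of a merely $L^\infty_{loc}$ function converge to its integral. That is false in general, and here the point values come from a representative modified on a null set.

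What rescues the argument is the structure you found in Step~1 and then dropped: $Y(s_{k+1})\le Y(r)$ for every $r\in[s_k,s_{k+1}]\setminus N$. Equivalently, apply the hypothesis once more with $s=r\notin N$ and $t=s_{k+1}$. This gives the one-sided bound
\[
y(s_{k+1})-y(r)\le\int_{s_k}^{s_{k+1}}\bigl(|\beta|+G|y|\bigr)\qquad\text{for a.e. } r\in(s_k,s_{k+1}).
\]
Since $Ge\ge0$ and only an upper bound is needed, the last term is at most $\bigl(\int_{s_k}^{s_{k+1}}Ge\bigr)\int_{s_k}^{s_{k+1}}(|\beta|+G|y|)$. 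Summing over $k$ gives at most $\max_{[0,t]}(Ge)\cdot\max_k(s_{k+1}-s_k)\cdot\int_0^t(|\beta|+G|y|)$, which tends to $0$. With this addition your telescoping closes and yields $e(t)y(t)\le y(0)+\int_0^te\beta$ for every $t\ge0$. Steps~1--2 are then needed only for this one-sided bound, and the distributional part is superfluous.
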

\begin{proof}
See e.g. \cite{Brando-Revista}.
\end{proof}

Another ingredient of the proof of Theorem~\ref{th:decay}, 
will be an inequality for low frequencies. This type of inequality goes back to Schonbek's Fourier-Splitting method
and was used also in Wiegner's work \cite{Wie87}, for the Navier-Stokes equations. For our problem, however, we must derive this Wiegner-type inequality more subtly, as we lack dissipation in the microrotation velocity.
We overcome this additional difficulty by taking advantage of the presence of a damping term in the equation of $h$.
Precisely, we have the following preliminary result.  

\begin{lemma} 
    Let $(u,h)$ be a finite energy solution
to~\eqref{MP2D}, with initial data $(u_0,h_0)\in L^2_\sigma(\R^2)\times L^2(\R^2)$. 
There exists $\delta=\delta(\chi,\mu)>0$ such that, for any
non-negative and monotonically decreasing function $g(\cdot)\in C^0(\R^+)$,
satisfying $g(0)\le 1$,
the following inequality holds:
\begin{equation}\label{WI}
    \|z(t)\|^2_{L^2}\,e(t) \lesssim \|z_0\|_{L^2}^2 
    + \delta\int_0^t e(\tau)\,g(\tau)^2\,I_u(\tau) \dd\tau, \quad \,\forall\,\,0 \leq s \leq t,
\end{equation}     
where $I_u(t):= \int_{|\xi| \le g(t)} |\widehat{u}(\xi,t)|^2 d\xi $ and 
$ e(t)=\exp\Bigl(\int_{0}^t \delta\, g(\tau)^2\dd \tau \Bigr)$.
\end{lemma}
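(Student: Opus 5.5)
We start from the energy equality \eqref{enereq} of Proposition~\ref{prop-energ}, dropping the nonnegative $\gamma$ term:
\[
\|z(t)\|^2_{L^2}+2\mu\int_s^t\|\nabla u\|^2_{L^2}+2\chi\int_s^t\|\omega-2h\|^2_{L^2}\le \|z(s)\|^2_{L^2},\qquad 0\le s\le t.
\]
The goal is to bound the dissipation $2\mu\|\nabla u\|^2+2\chi\|\omega-2h\|^2$ from below by $\delta g^2\|z\|^2_{L^2}$, up to a low-frequency remainder $\delta g^2 I_u$. Once we have
\begin{equation*}
2\mu\|\nabla u\|^2_{L^2}+2\chi\|\omega-2h\|^2_{L^2}\ \ge\ \delta g(\tau)^2\big(\|u\|^2_{L^2}+\|h\|^2_{L^2}\big)-\delta g(\tau)^2 I_u(\tau),
\end{equation*}
we obtain the integral inequality $y(t)+\int_s^t\delta g^2 y\le y(s)+\int_s^t \delta g^2 I_u$ with $y=\|z\|^2_{L^2}$. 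This holds for all $0\le s\le t$, since $z\in C^0([0,\infty);L^2)$ by Proposition~\ref{prop-energ}. Lemma~\ref{Brando_Revista}, applied with $g$ replaced by $\sqrt\delta\, g$ and $\beta=\delta g^2 I_u$, then gives \eqref{WI} directly, and in fact with constant $1$.

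For the velocity part we use Plancherel and split into frequencies. On the region $|\xi|>g$ we have $\|\nabla u\|^2\ge \int_{|\xi|>g}|\xi|^2|\widehat u|^2\ge g^2(\|u\|^2-I_u)$. Hence $\mu\|\nabla u\|^2\ge \mu g^2\|u\|^2-\mu g^2 I_u$.

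For the microrotation we exploit the damping hidden in $\|\omega-2h\|^2$. Write $2h=\omega-(\omega-2h)$, so $4\|h\|^2\le 2\|\omega\|^2+2\|\omega-2h\|^2$. This gives
\[
\|h\|^2\le \tfrac12\|\nabla u\|^2+\tfrac12\|\omega-2h\|^2.
\]
Since $g\le g(0)\le1$, we get $g^2\|h\|^2\le \|h\|^2\le \tfrac12\|\nabla u\|^2+\tfrac12\|\omega-2h\|^2$. Combining with the velocity bound, we spend half of $2\mu\|\nabla u\|^2$ on $u$ and the rest together with $2\chi\|\omega-2h\|^2$ on $h$:
\[
2\mu\|\nabla u\|^2+2\chi\|\omega-2h\|^2\ge \mu g^2\|u\|^2-\mu g^2I_u+2\min(\mu,\chi)\,g^2\|h\|^2.
\]
Choosing $\delta=\min(\mu,2\min(\mu,\chi))$ yields the desired lower bound, with the remainder coefficient $\mu g^2 I_u\le$ a constant times $\delta g^2 I_u$. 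That constant is absorbed in $\lesssim$, or we can simply redefine $\delta$ as $\mu$ in front of $I_u$.

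The delicate point is rigor rather than algebra. The energy inequality must hold on every interval $[s,t]$, so that Lemma~\ref{Brando_Revista} applies to a possibly nonsmooth solution. This is guaranteed by Proposition~\ref{prop-energ}, whose DiPerna–Lions argument is what makes it valid when $\gamma=0$. The other essential ingredient is the observation that the missing spin diffusion is compensated by $\|\omega-2h\|^2$ together with $g\le1$: this lets us avoid any frequency splitting on $h$, so that $I_u$ involves only $u$.
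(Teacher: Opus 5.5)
Your proposal is correct and follows the paper's proof. Both arguments start from the energy equality, bound the dissipation from below by $\delta(\|\nabla u\|^2_{L^2}+\|h\|^2_{L^2})$, apply Fourier splitting to $u$ only while using $g\le1$ for $h$, and conclude with Lemma~\ref{Brando_Revista}. The only difference is how you get that lower bound: you use $\|h\|^2_{L^2}\le\frac12\|\omega\|^2_{L^2}+\frac12\|\omega-2h\|^2_{L^2}$, which gives $\delta=\min(\mu,2\chi)$ and leaves a remainder coefficient $\mu$ that is absorbed into $\lesssim$ (so the constant is not literally $1$). The paper instead expands $\|\omega-2h\|^2_{L^2}$ and applies Young's inequality with an optimized $\eta$, obtaining the explicit $\delta$ in \eqref{deltaf}.
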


\begin{proof}
First, let us observe that the energy equality~\eqref{enereq}
implies, for all $0\le s\le t$,
\[
\|z(t)\|_{L^2}^2+2(\mu+\chi)\int_s^t\|\nabla u\|_{L^2}^2+8\chi\int_s^t\|h\|_{L^2}^2
-8\chi\int_s^t\|\nabla u\|_{L^2}\|h\|_{L^2}\le 
\|z(s)\|_{L^2}^2.
\]
Notice that, in the above form, the inequality holds with or without microrotational dissipation, i.e., for any $\gamma\ge0$.
This implies, the inequality valid for all $\eta>0$, but mainly useful when
$1/2<\eta<\frac{\mu+\chi}{2\chi}$,
\[
\|z(t)\|_{L^2}^2+2(\mu+\chi-2\chi\eta)\int_s^t\|\nabla u\|_{L^2}^2+(8\chi-4\chi/\eta)\int_s^t\|h(s)\|_{L^2}^2
\le\|z(s)\|_{L^2}^2.
\]
Let $\delta,\eta>0$ be such that
\[
\delta=2(\mu+\chi-2\chi\eta)= 8\chi-4\chi/\eta.\\
\]
This is indeed possible by solving the quadratic equation in $\eta$,
then choosing
\begin{equation}
\label{deltaf}
\delta=\mu+5\chi-\sqrt{\mu^2+25\chi^2-6\chi\mu}.
\end{equation}
This leads to
\begin{equation*}
    \|z(t)\|^2_{L^2}+ \delta\int_s^t \|\nabla u (\tau) \|^2_{L^2} \dd \tau +\delta\int_s^t \|h (\tau) \|^2_{L^2} \dd \tau \le \|z(s)\|^2_{L^2},  
    \qquad\text{for all $0\le s\le t$.}
\end{equation*}		
However, 
\begin{equation}
\label{FS-meth}
\|\nabla u(t)\|^2_{L^2}
=\frac1{4\pi}\int |\xi|^2|\widehat u(\xi,t)|^2\dd\xi
\ge \frac{g(t)^2}{4\pi}\int_{|\xi|\ge g(t)}|\widehat u(\xi,t)|^2\dd\xi
=g(t)^2\|u(t)\|_{L^2}^2 - \frac{g(t)^2}{4\pi}I_u(t).
\end{equation}
Hence, using that $g(\cdot)\le1$,
\begin{equation}\label{62bis}
        \|z(t)\|^2_{L^2} + \delta\int_s^t\,g(\tau)^2 \|z (\tau) \|^2_{L^2} \dd \tau \le \|z(s)\|^2_{L^2} + 
        \frac{\delta}{4\pi} \int_s^t g(\tau)^2 I_u(\tau) \dd \tau, \quad \forall\,0 \le s \le t.
\end{equation}
By the definition of finite energy solutions, see Definition \ref{fesol},
$z \in L^\infty(\mathbb{R}^+, L^2 (\mathbb{R}^2) )$, so we can apply the previous Lemma \ref{Brando_Revista} and estimate~\eqref{WI} follows.
\end{proof}

Before delving into the proof of Theorem \ref{th:decay}, allow us to make some remarks concerning low-frequency estimates. These will be necessary for proving Theorem \ref{th:decay}.

Let us recall, see \eqref{symbK}, that we denoted by $K(\cdot,t)$ the symbol of the linear operator 
$z_0\mapsto \zl(\cdot,t)$,
i.e, for a.e. $\xi\in\R^2$,
\[
\widehat{\zl}(\xi,t)=K(\xi,t)\widehat z_0(\xi).
\]
Proposition~\ref{sim-heat} implies that $K(\cdot,t)\in L^\infty(\R^2)$.
Moreover, for finite energy solutions to~\eqref{MP2D}, we have
$|\widehat{u\otimes u}(\xi,t)|\le \|u(t)\|_{L^2}^2
\le \|z(t)\|_{L^2}^2$
and $|\widehat{u\otimes h}(\xi,t)|
\le \|u(t)\|_{L^2}\,\|h\|_{L^2}\le \|z(t)\|_{L^2}^2$.
This allows us to compute the Fourier transform term-by-term in the integral formulation of~\eqref{MP2D}, and thus obtain, for a.e. $\xi\in \R^2$,
\begin{equation*}
\begin{split}
\widehat z(\xi,t)
 &= K(\xi,t)\widehat z_0(\xi)
 	+\int_0^t K(\xi,t-s)\mathcal{F}
  	\begin{pmatrix}
	-\P\nabla\cdot(u\otimes u)\\
	-\nabla\cdot(u\otimes h)
	\end{pmatrix}(\xi,s)
	\dd s.	
\end{split}
\end{equation*}
The operators $\P\nabla\cdot$ and $\nabla\cdot$ being Fourier multipliers with symbol $\lesssim |\xi|$,
we deduce the important pointwise estimate
\begin{equation*}
\begin{split}
|\widehat z(\xi,t)|
 &\lesssim |\widehat \zl(\xi,t)|
 	+|\xi| \int_0^t \|z(s)\|_{L^2}^2\dd s,
 	\qquad \text{for a.e. $\xi\in \R^2$}.
\end{split}
\end{equation*}
Integrating $|\widehat z(\xi,t)|^2$ in the ball $\{|\xi|\le g(t)\}$, we get
\begin{equation}
\label{iteration0c}
I_u(t)\,\leq\,I_z(t):=\int_{|\xi| \le g(t)} |\widehat{z}(\xi,t)|^2 d\xi\,\lesssim \|\zl(t)\|_{L^2}^2
+g(t)^{4}\Bigl(\int_0^t \|z(s)\|_{L^2}^2\dd s\Bigr)^2.
\end{equation}

We need a last Lemma before proving Theorem \ref{th:decay}.

\begin{lemma}\label{crucial-lemma} 
Let $0<\Gamma<1$, $u_0\in L^2_\sigma(\R^2)$ and $h_0\in L^2(\R^2)$, such that $\|\zl(t)\|^2_{L^2} \lesssim (1+t)^{-\Gamma}$.
Then,
for any finite energy solution of~\eqref{MP2D},
we have
$\|z(t)\|_{L^2}^2 \lesssim (1+t)^{-\Gamma}$.
\end{lemma}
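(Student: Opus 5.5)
We follow Wiegner's Fourier splitting scheme, using the inequality \eqref{WI} together with the low-frequency bound \eqref{iteration0c}. The idea is to prove first a rough bound and then feed it back once.

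\emph{Step 1: splitting function.} Choose $g(t)^2=\frac{m}{\delta(1+t)}$ with $m>1$ to be fixed later. Rescaling time if needed, we may assume $g(0)\le1$. Then
\[
e(t)=(1+t)^m,\qquad \delta e(\tau)g(\tau)^2 = m(1+\tau)^{m-1}.
\]
Inserting \eqref{iteration0c} into \eqref{WI} gives
\[
(1+t)^m\|z(t)\|_{L^2}^2\lesssim \|z_0\|_{L^2}^2+\int_0^t(1+\tau)^{m-1}\Bigl[\|\zl(\tau)\|_{L^2}^2+(1+\tau)^{-2}\Bigl(\int_0^\tau\|z(s)\|_{L^2}^2\dd s\Bigr)^2\Bigr]\dd\tau .
\]

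\emph{Step 2: first (rough) decay.} We use only the uniform bound $\|z(s)\|_{L^2}\le\|z_0\|_{L^2}$, which comes from the energy equality \eqref{enereq}. It gives
\[
\Bigl(\int_0^\tau\|z\|^2\Bigr)^2\lesssim \tau^2,
\]
so the nonlinear contribution to the integrand is $O\bigl((1+\tau)^{m-1}\bigr)$. This yields only boundedness, which is not enough. We therefore sharpen the estimate by using $\|z(s)\|_{L^2}\to0$: Theorem~\ref{Thm_little_o} gives $\|u\|_{L^2}\to0$, and Corollary~\ref{corgamma0} and Proposition~\ref{pident} give $\|h\|_{L^2}\to0$. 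Hence $\int_0^\tau\|z\|^2=o(\tau)$.

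The standard way to get a genuine power is Wiegner's logarithmic trick. Take $g(t)^2=\frac{m}{\delta(1+t)\log(e+t)}$, so that $e(t)\sim\log(e+t)^m$. With this choice the nonlinear term becomes
\[
g^4\Bigl(\int_0^\tau\|z\|^2\Bigr)^2\lesssim \log(e+\tau)^{-2}.
\]
This gives $\|z(t)\|^2\lesssim\log(e+t)^{-1}$, which is still weak. I will instead iterate in power form. Suppose $\|z(s)\|^2\lesssim(1+s)^{-a}$ with $0\le a<1$. Then
\[
g^4\Bigl(\int_0^\tau\|z\|^2\Bigr)^2\lesssim(1+\tau)^{-2}(1+\tau)^{2-2a}=(1+\tau)^{-2a}.
\]
Taking $m$ large, the right-hand side of Step~1 integrates to $(1+t)^{m}\bigl[(1+t)^{-\Gamma}+(1+t)^{-2a}\bigr]$, so
\[
\|z(t)\|^2\lesssim(1+t)^{-\min(\Gamma,2a)}.
\]

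\emph{Step 3: bootstrap.} The main obstacle is getting the iteration started, i.e.\ obtaining some $a>0$; the map $a\mapsto2a$ fixes $0$. I would first try Wiegner's log step above: it gives $\|z\|^2\lesssim\log(e+t)^{-1}$. Feeding that into the power-form choice of $g$, the nonlinear term is $(\int_0^\tau\|z\|^2)^2/(1+\tau)^2\lesssim\log(e+\tau)^{-2}$. This improves the logarithm but does not produce a power.

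The better route is to argue more carefully with the $o(1)$ information. By Theorem~\ref{Thm_little_o}, for any $\epsilon>0$ we have $\|z(s)\|^2\le\epsilon$ for $s\ge T_\epsilon$. Then
\[
(1+\tau)^{-2}\Bigl(\int_0^\tau\|z\|^2\Bigr)^2\lesssim C_\epsilon(1+\tau)^{-2}+\epsilon^2 .
\]
To get decay from this, I would combine it with a Gronwall-type argument that treats the whole low-frequency nonlinear term as $\epsilon^2\,\sup_{s\le t}(1+s)^{\Gamma}\|z(s)\|^2\,(1+t)^{-\Gamma}$. Concretely, set $M(t)=\sup_{s\le t}(1+s)^\Gamma\|z(s)\|^2$. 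Then
\[
\int_0^\tau\|z\|^2\lesssim M(\tau)(1+\tau)^{1-\Gamma},
\]
which uses $\Gamma<1$. This gives
\[
g^4\Bigl(\int_0^\tau\|z\|^2\Bigr)^2\lesssim M(\tau)(1+\tau)^{-2\Gamma}\sup_{s\le\tau}\|z(s)\|^2 .
\]
Here one factor $\|z\|^2$ is kept as $\sup\|z\|^2$. Splitting $[0,\tau]$ at $T_\epsilon$, the part where $s\ge T_\epsilon$ contributes $\epsilon M(\tau)(1+\tau)^{-2\Gamma}(1+\tau)^{\Gamma}$, because $\|z(s)\|^2\le\epsilon$ there, and this is $\le\epsilon M(\tau)(1+\tau)^{-\Gamma}$. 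Then Step~1 yields
\[
M(t)\lesssim C_\epsilon+\epsilon M(t).
\]
Choosing $\epsilon$ small gives $M(t)\lesssim 1$, and this is precisely the claim. The delicate point is bookkeeping the $\epsilon$-splitting so that the constant in front of $M$ is truly small, uniformly in $m$ (which is fixed with $m>1$); $\Gamma<1$ is exactly what is needed for the time integrals to converge.
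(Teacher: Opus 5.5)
Your final argument, absorbing a small multiple of $M(t)=\sup_{s\le t}(1+s)^\Gamma\|z(s)\|_{L^2}^2$, is correct in substance but follows a different route from the paper.

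\textbf{Comparison with the paper.} The paper stays inside the Fourier-splitting inequality throughout:
\begin{itemize}
\item It first takes $g(t)^2=\frac k\delta(t+A)^{-1}[\log(t+A)]^{-1}$, $k=1,2$, to get $\|z(t)\|_{L^2}^2\lesssim[\log(t+A)]^{-2}$, hence $\int_0^s\|z\|_{L^2}^2\lesssim(s+A)[\log(s+A)]^{-2}$.
\item It then takes $g(t)^2=\frac\alpha\delta(t+A)^{-1}$ with $\Gamma<\alpha<1$. It bounds one factor of $(\int_0^s\|z\|_{L^2}^2)^2$ by this logarithmic estimate, and the other through the averaged quantity $\mathbb{Y}(s)=\max_{1\le\tau\le s}\int_{\tau-1}^{\tau}\|z(r)\|_{L^2}^2(r+A)^\alpha\dd r$.
\item It closes with Gr\"onwall's lemma, since the kernel $(s+A)^{-1}[\log(s+A)]^{-2}$ is integrable.
\end{itemize}
You use the same factorization: one factor carries the weighted unknown, the other is $o(\tau)$. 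But you use only the qualitative fact $\|z(t)\|_{L^2}\to0$, and you close by absorption rather than by Gr\"onwall. The pointwise supremum is legitimate, since $t\mapsto\|z(t)\|_{L^2}$ is continuous (Proposition~\ref{prop-energ}) and $M(t)\le(1+t)^\Gamma\|z_0\|_{L^2}^2<\infty$.

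Your route is shorter and dispenses with the logarithmic bootstrap and the $\mathbb{Y}$ device. The price is twofold. You import $\|z(t)\|_{L^2}\to0$ from Theorem~\ref{Thm_little_o}, Corollary~\ref{corgamma0} and Proposition~\ref{pident}; for $\gamma=0$ this rests on the enstrophy monotonicity. And your constant depends on the solution through $T_\epsilon$. Both points disappear if you feed in your own first logarithmic step instead. It already gives $\|z(t)\|_{L^2}^2\lesssim[\log(e+t)]^{-1}$, with constants depending only on $\|z_0\|_{L^2}$, the constant in the hypothesis and the viscosities.

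\textbf{Repairing the last step.} The display involving $(1+\tau)^{-2\Gamma}\sup_{s\le\tau}\|z(s)\|_{L^2}^2$ is off by a power of $(1+\tau)$. Moreover, that supremum equals $\|z_0\|_{L^2}^2$ by the energy equality, so it is never small. Split only one factor:
\[
\Bigl(\int_0^\tau\|z\|_{L^2}^2\Bigr)^2\le \tau\|z_0\|_{L^2}^2\cdot T_\epsilon\|z_0\|_{L^2}^2+\frac{M(\tau)(1+\tau)^{1-\Gamma}}{1-\Gamma}\cdot\epsilon\tau .
\]
Then the nonlinear term is $\lesssim C_\epsilon(1+\tau)^{-1}+\epsilon M(\tau)(1+\tau)^{-\Gamma}$. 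Integrate against $(1+\tau)^{m-1}$ with $m>1$ and multiply by $(1+t)^{\Gamma-m}$. This gives
\[
(1+t)^\Gamma\|z(t)\|_{L^2}^2\le C+C_\epsilon(1+t)^{\Gamma-1}+C_m\epsilon M(t),
\]
where the middle term is bounded precisely because $\Gamma<1$. Using $M(s)\le M(t)$ for $s\le t$, you get $M(t)\le C+C_\epsilon+C_m\epsilon M(t)$. You can then absorb, choosing $\epsilon$ after $m$.

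Finally, ``rescaling time'' is not available: \eqref{MP2D} has no scaling invariance, because the damping and coupling terms break it. Just take $g(t)^2=\frac m\delta(t+A)^{-1}$ with $A$ large, as the paper does.
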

\begin{proof}
Let $\delta$ be defined in \eqref{deltaf}. We choose $A=A(\chi,\mu)\ge10$ large enough, such that, with the choice
\[
g(t)^2 := \frac{k}{\delta}(t+A)^{-1}\,[\,\log(t+A)\,]^{-1},\qquad k=1,2,
\] 
the required condition $g(\cdot)\le1$ holds true.
Then, 
\[
e(t)=\exp{\int_0^t \delta\, g(s)^2\dd s } = \frac{[\,\log(t+A)\,]^k}{[\log A]^k}.
\]
With this choice of $g$, by using the inequality \eqref{iteration0c} we obtain
\begin{equation*}
I_z(t)\lesssim 
\|\zl(t)\|_{L^2}^2+(t+A)^{-2}\,[\log(t+A)]^{-2}\,\Bigl(\int_0^t \|z(s)\|^2_{L^2}\dd s\Bigr)^2.
\end{equation*}
By applying \eqref{WI}, we have, for all $t\ge0$ and $k=1,2$,
\begin{multline}\label{WI_log_k}
        \|z(t)\|^2_{L^2}\,[\log(t+A)]^k\,
        \lesssim
        \|z_0\|^2_{L^2}+\int_0^t [\log(s+A)]^{k-1}\,(s+A)^{-1}
        \,\bigg\{
        \|\zl(s)\|^2_{L^2} \\ +(s+A)^{-2}\,[\log(s+A)]^{-2}\,\Bigl(\int_0^s \|z(r)\|^2_{L^2}\dd r\Bigr)^2\bigg\} \dd s .
\end{multline}

Recall the hypothesis  $\|\zl(t)\|_{L^2}^2\lesssim (1+t)^{-\Gamma}$. We use now the fact that \(\|z(t)\|^2_{L^2}\le \|z_0\|^2_{L^2}\), see \eqref{enereq}, and choose \(k = 1\) in the inequality~\eqref{WI_log_k}. 
Thus, we obtain:
 \begin{equation*}
    \begin{split}
        \|z(t)\|^2_{L^2}\,\log(t+A)
        &\lesssim 
        1+\int_0^t \,(s+A)^{-1}\,\|\zl(s)\|^2_{L^2}\dd s
        +\int_0^s \,(s+A)^{-1} \,[\log(s+A)]^{-2}\dd s
        \\
        &\lesssim 
        1 + \int_0^t (s+A)^{-1} (1+s)^{-\Gamma} \dd s \\
        &\lesssim 1.
    \end{split}
\end{equation*}
This logarithmic decay for $\|z(t)\|_{L^2}^2$ implies
that 
$(\int_0^s \|z(r)\|_{L^2}^2\dd r)^2\lesssim (s+A)^2[\log(s+A)]^{-2}$.
Using the last inequality in \eqref{WI_log_k}, we obtain:
\begin{equation*}
        \|z(t)\|^2_{L^2}\,[\log(t+A)]^k
        \lesssim
        1
        +\int_0^t \,[\log(s+A)]^{k-1}\,\,(s+A)^{-1-\Gamma} \dd s
        + \int_0^t \,[\log(s+A)]^{k-5}\,(s+A)^{-1} \dd s\lesssim  1.
\end{equation*}
By taking $k = 2$ in the inequality above, we obtain the improved logarithmic decay given by $\|z(t)\|^2_{L^2} \lesssim [\log(t+A)]^{-2}$. So, $\int_0^s\,\|z(r)\|^2_{L^2} \dd r \lesssim (s+A)\,[\log(s+A)\,]^{-2}$. 
Now, we must use this fact, with a new choice for $g$, 
to improve the estimate for \(I_z\).
Recalling~\eqref{iteration0c}, we obtain in this way
\[
I_z(t) \lesssim \|\zl(t)\|^2_{L^2} + g(t)^4\,(t+A)\,[\log(t+A)\,]^{-2} \int_0^t\,\|z(s)\|^2_{L^2} \dd s.
\] 
Thus  \eqref{WI} takes the following form:
\begin{equation}\label{WI_prepared_0} 
        \|z(t)\|^2_{L^2}\,e(t)
        \lesssim 
        1+\int_{0}^t e(s)\,g(s)^2\bigg\{
        \|\zl(s)\|^2_{L^2}  + g(s)^4 (s+A)[\log(s+A)]^{-2}\int_0^s \|z(r)\|^2_{L^2}\dd r\bigg\} \dd s .
\end{equation}
But, according to Lemma's assumptions, we have
$\|\zl(t)\|^2_{L^2} \lesssim (1+t)^{-\Gamma}$.
We now choose an arbitrary 
$\alpha$ such that $\Gamma<\alpha<1$ and we also choose $A\ge\max\{\alpha/\delta,1\}$.
Then, the splitting function $g(t)^2:=\frac{\alpha}{\delta} (t+A)^{-1}$
satisfies the required conditions for the validity of estimate~\eqref{WI}.
With this choice of $g$, we obtain 
$ e(t) =\frac{1}{A^\alpha}(t+A)^\alpha$.
Then, by \eqref{WI_prepared_0},
\begin{equation}\label{WI_prepared_1} 
    \begin{split}
        \|z(t)\|^2_{L^2}\,(t+A)^\alpha
        &\lesssim  1 +\int_0^t (s+A)^{\alpha-\Gamma-1}\dd s  + \int_0^t (s+A)^{\alpha-2}[\log(s+A)]^{-2} \int_0^s \|z(r)\|^2_{L^2}\dd r \dd s \\
        &\lesssim (t+A)^{\alpha-\Gamma} + \int_0^t (s+A)^{\alpha-2}[\log(s+A)]^{-2} \int_0^s \|z(r)\|^2_{L^2} \dd r \dd s .
    \end{split}
\end{equation}
Let us now estimate the term $ \int_0^s \|z(r)\|^2_{L^2}\dd r$. First, we define two continuous functions, namely, $y(t)$ and $\mathbb{Y}(t)$, for $t \geq 1$, as follows:  
\[ 
y(t) := \int_{t-1}^t \|z(r)\|^2_{L^2} (r+A)^\alpha \dd r
\quad\text{and}\quad
\mathbb{Y}(t):= \max\{y(\tau);\,1\leq\tau\leq t\}.
\]
Now, denoting by $[s]$ the integer part of $s$, we observe that,
for $s\ge1$
\begin{equation*}
    \begin{split}
         \int_0^s \|z(r)\|^2_{L^2}\dd r 
        &\lesssim 1 + \sum_{j=0}^{[s]-1} \int_{s-j-1}^{s-j} \|z(r)\|^2_{L^2} \dd r \\ 
        &\lesssim 1 + 
        \sum_{j=0}^{[s]-1}(s-j)^{-\alpha} \int_{s-j-1}^{s-j} \|z(r)\|^2_{L^2} (r+A)^\alpha \dd r \\ 
        &\lesssim 1 + \sum_{j=0}^{[s]-1}(s-j)^{-\alpha} y(s-j) \\ 
        &\lesssim 1 + \mathbb{Y}(s)\sum_{j=0}^{[s]-1}(s-j)^{-\alpha}  
        \\ 
        &\lesssim 1 + \mathbb{Y}(s) (s+A)^{1-\alpha}.
    \end{split} 
\end{equation*}
In the last inequality we used that $\alpha<1$.
Plugging this estimate into \eqref{WI_prepared_1}
we deduce, for all $t\ge1$,
\begin{equation}\label{67}
    \begin{split}
        \|z(t)\|^2_{L^2}\,(t+A)^\alpha
        &\lesssim (t+A)^{\alpha-\Gamma}
        + \int_1^t (s+A)^{-1}[\log(s+A)]^{-2}\mathbb{Y}(s)\dd s .
    \end{split}
\end{equation}
Integrating the latter inequality over the interval $[t-1,t]$, we obtain
\begin{equation*}
    \begin{split}
        y(t) \lesssim (t+A)^{\alpha-\Gamma} + \int_1^t (s+A)^{-1}\,[\log(s+A)]^{-2}\, \mathbb{Y}(s) \dd s, \quad t\geq 1 
     \end{split}
\end{equation*}
Hence, for some constant $C>0$
\begin{equation*}
        \mathbb{Y}(t) \leq C(t+A)^{\alpha-\Gamma} + C\int_1^t (s+A)^{-1}\,[\log(s+A)]^{-2}\, \mathbb{Y}(s) \dd s, \quad t\geq 1.
\end{equation*}
The classical Grönwall's lemma implies, for $t\ge1$,
\begin{equation*}
        \mathbb{Y}(t) \leq C(t+A)^{\alpha-\Gamma} 
        +\int_1^t C^2(s+A)^{\alpha-\Gamma-1}\,[\log(s+A)]^{-2}
        \exp\Bigl(\int_s^t C(\tau+A)^{-1}[\log(\tau+A)]^{-2}\dd\tau\Bigr) \dd s.
\end{equation*}
As the map $\tau\mapsto (\tau+A)^{-1}[\log(\tau+A)]^{-2}$ is integrable on $[1,+\infty[$,
we obtain, for another constant $C'>0$, 
\begin{equation*}
    \begin{split}
        \mathbb{Y}(t) \leq C(t+A)^{\alpha-\Gamma} 
        + C'\int_1^t (s+A)^{\alpha-\Gamma-1}\,[\log(s+A)]^{-2} \dd s
        \lesssim (t+A)^{\alpha-\Gamma}, \quad t\geq 1.
    \end{split}
\end{equation*}
Hence, from \eqref{67}, $\|z(t)\|^2_{L^2} \lesssim (t+A)^{-\Gamma}$, for all $t>0$,  which concludes the proof of the lemma.
\end{proof}

We now apply this Lemma to establish the first part of
Theorem~\ref{th:decay}.

\subsection*{Proof of Part i) of Theorem~\ref{th:decay}}
\

We begin by observing that, thanks to \eqref{ul11}, the hypothesis 
$\|e^{\mu t\Delta}(u_0-\frac12\nabla^\perp h_0)\|_{L^2}^2=O(t^{-\Gamma})$ as $t\to+\infty$ with $0 \le \Gamma\le 2$ is equivalent to  $\|u_L(t)\|_{L^2}^2\lesssim (1+t)^{-\Gamma}$. \label{startproof} Remark~\ref{rem:conse1} then implies that $\|\zl(t)\|^2_{L^2} = O(t^{-\Gamma})$ as $t\to+\infty$.

\begin{proof}[Proof of Part i) of Theorem~\ref{th:decay} in the case $0<\Gamma<1$.]\ \\
\indent Let $0<\Gamma<1$.
Lemma~\ref{crucial-lemma} applies, implying 
$\|u(t)\|_{L^2}^2+\|h(t)\|_{L^2}^2=O(t^{-\Gamma})$.
Proposition~\ref{consequence1}, then implies the improved decay
$\|h(t)\|_{L^2}^2=O(t^{-\Gamma-1})$ and the assertion i) follows in this case.
\end{proof}

\begin{proof}[Proof of Part i) of Theorem~\ref{th:decay} in the case $1\le\Gamma\leq 2$]\ \\
\indent Let us go back to~\eqref{WI_prepared_0}, and choose as before the splitting function $g(t)^2=\frac{\alpha}{\delta}(t+A)^{-1}$, but this time
with a larger exponent $\alpha$ (e.g., $\alpha=3$  will do) and $A\ge\max\{\alpha/\delta,1\}$.
Let us now go back to~\eqref{iteration0c},
and use the information 
$\|z(t)\|_{L^2}^2\lesssim (1+t)^{-1+\epsilon}$, 
valid for all $0<\epsilon<1$, because Assertion i)
of Theorem~\ref{th:decay} was already established for the decay
exponent $1-\epsilon$. We obtain
\begin{equation}
\label{itenew}
I_z(t)=\int_{|\xi| \le g(t)} |\widehat{z}(\xi,t)|^2 \dd\xi
\lesssim \|\zl\|_{L^2}^2
+g(t)^{4}\Bigl(\int_0^t \|z(s)\|_{L^2}^2\dd s\Bigr)^2\\
\lesssim
(t+A)^{-\Gamma}+(t+A)^{-2+2\epsilon}.
\end{equation}
Now, if $1\le \Gamma<2$, we can choose $\epsilon=1-\Gamma/2$
and obtain
\[
I_z(t)\lesssim (t+A)^{-\Gamma}.
\]
Plugging this in inequality~\eqref{WI} we obtain, observing that
$I_u\le I_z$,\begin{equation*}
\|z(t)\|_{L^2}^2 \,(t+A)^\alpha 
\lesssim 1+\int_0^t(s+A)^{\alpha-1-\Gamma}\dd s
\end{equation*}
and infer that $\|z(t)\|_{L^2}^2=O(t^{-\Gamma})$ as $t\to+\infty$.
In the case $\Gamma=2$, we can use e.g.,
the information $\|z(t)\|_{L^2}\lesssim(t+A)^{-3/2}$ to deduce 
$\int_0^{+\infty}\|z(s)\|_{L^2}^2\dd s\lesssim 1$ from the previous case
and improve estimate~\eqref{itenew}
to 
\[
I_z(t)\lesssim (t+A)^{-2}.
\]
Then one concludes as above that
$\|z(t)\|_{L^2}^2=O(t^{-\Gamma})$ as $t\to+\infty$
also in the case $\Gamma=2$.
The application of Proposition~\ref{consequence1} now yields
 $\|h(t)\|_{L^2}^2=O(t^{-\Gamma-1})$.
\end{proof}

\subsection*{Proof of Part ii) of Theorem~\ref{th:decay}}
\ 

Let us start with a useful lemma.
\begin{lemma}
\label{lem:degr}
Let $\Gamma\ge0$ and  $z_0=(u_0,h_0)\in L^2_\sigma\times L^2$ be such that
$\|\zl(t)\|^2=O(t^{-\Gamma})$ as $t\to+\infty$. 
When $\gamma=0$ we additionally assume that $\int (1+|\xi|)|\widehat z_0(\xi)|\dd\xi <\infty$. (No additional assumption is required when~$\gamma>0$).
Then we have
\begin{equation*}
\|\zl(t)\|_{L^\infty}^2+\|\nabla  \zl(t)\|_{L^\infty}^2
=O(t^{-1-\Gamma})
\qquad\text{as $t\to+\infty$}.
\end{equation*}
\end{lemma}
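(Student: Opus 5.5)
We will work entirely on the Fourier side and use the elementary bounds $\|f\|_{L^\infty}\le (2\pi)^{-2}\|\widehat f\|_{L^1}$ and $\|\nabla f\|_{L^\infty}\lesssim \||\xi|\widehat f\|_{L^1}$. Since $\widehat{\zl}(\xi,t)=K(\xi,t)\widehat z_0(\xi)$, the task is to bound
\[
\int (1+|\xi|)\,|K(\xi,t)\widehat z_0(\xi)|\dd\xi .
\]
We split the frequency domain into $\{|\xi|\le1\}$ and $\{|\xi|>1\}$.

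\textbf{Low frequencies.} We factor the semigroup as $\zl(t)=e^{\frac t2M}\zl(\frac t2)$. Then, by Cauchy--Schwarz and Proposition~\ref{sim-heat},
\[
\int_{|\xi|\le1}(1+|\xi|)\,|K(\xi,t/2)|\,|\widehat{\zl}(\xi,t/2)|\dd\xi
\lesssim \Big(\int_{\R^2} e^{-ct|\xi|^2}\dd\xi\Big)^{1/2}\|\zl(t/2)\|_{L^2}
\lesssim t^{-1/2}\,t^{-\Gamma/2}.
\]
Here we used $|K(\xi,t/2)|\le Ce^{-ct|\xi|^2/2}$, which holds for $|\xi|\le1$ for every $\gamma\ge0$. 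Squaring gives $O(t^{-1-\Gamma})$. The same computation with the extra factor $|\xi|$ gives an even better rate, $t^{-1}\,t^{-\Gamma/2}$, so the gradient is handled as well. This step uses only the hypothesis $\|\zl(t)\|^2=O(t^{-\Gamma})$ together with the semigroup property.

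\textbf{High frequencies.} When $\gamma>0$, Proposition~\ref{sim-heat}a) gives $|K(\xi,t/2)|\le Ce^{-ct|\xi|^2}\le Ce^{-ct}e^{-ct|\xi|^2/2}$ on $\{|\xi|>1\}$. The same Cauchy--Schwarz argument then gives exponential decay times $\|\zl(t/2)\|_{L^2}$, and no extra assumption is needed. When $\gamma=0$, Proposition~\ref{sim-heat}b) only gives $|K(\xi,t)|\le Ce^{-ct}$ for $|\xi|>1$, with no smoothing. Hence
\[
\int_{|\xi|>1}(1+|\xi|)\,|K(\xi,t)\widehat z_0(\xi)|\dd\xi\le Ce^{-ct}\int(1+|\xi|)\,|\widehat z_0(\xi)|\dd\xi,
\]
which is exponentially small, and this is where the extra hypothesis $\int(1+|\xi|)|\widehat z_0|<\infty$ enters. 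Note that we do not split through $t/2$ here; we use $z_0$ directly.

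\textbf{Main obstacle.} The delicate point is the $\gamma=0$ high-frequency region, where the symbol decays in time but not in $\xi$. It therefore cannot convert $L^2$ information into $L^1$-in-Fourier information, and the assumed integrability of $(1+|\xi|)\widehat z_0$ is exactly what compensates. One point needs checking: the entries of $K$ contain factors $\xi_jE_{1,2}$, so the bound $|K|\le Ce^{-(\alpha-\sqrt D)t}$ must already absorb these $|\xi|$ factors. It does, by the proof of Proposition~\ref{sim-heat}, since $|\xi|/\sqrt D\le 1/(2\chi)$. Adding the two frequency regions proves the claimed $O(t^{-1-\Gamma})$ rate.
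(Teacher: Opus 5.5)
Your proof is correct and follows essentially the same route as the paper. At low frequencies you use the factorization $\widehat{z_L}(\xi,t)=K(\xi,t/2)\widehat{z_L}(\xi,t/2)$ with Cauchy--Schwarz on $\{|\xi|\le 1\}$, and when $\gamma=0$ you use the bound $|K(\xi,t)|\lesssim e^{-ct}$ on $\{|\xi|\ge 1\}$ together with the hypothesis $\int(1+|\xi|)|\widehat z_0|\,d\xi<\infty$, exactly as the paper does. The only cosmetic difference is that for $\gamma>0$ the paper does not split frequencies at all: it bounds $\|(1+|\xi|)K(\xi,t/2)\|_{L^2}$ over all of $\mathbb{R}^2$ using the global Gaussian estimate, which is equivalent to your treatment.
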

\begin{proof} 
Let us start with the case $\gamma=0$. Recall that the matrix $K$ defined in \eqref{symbK} is the symbol of $e^{tM}$, the semigroup associated to the linear system  \eqref{sysL}. Observe  by the semigroup property $e^{tM}=e^{\frac t2 M}e^{\frac t2 M}$  that
\begin{equation*}
\widehat{\zl}(t)=K(\xi,t)\widehat{z_0}=K(\xi,t/2)\widehat{\zl}(t/2).
\end{equation*}
Using the pointwise bound
of the symbol $K$ proved in Proposition \ref{sim-heat}, we obtain
\[
\begin{split}
\|\zl(t)\|_{L^\infty}&+\|\nabla \zl(t)\|_{L^\infty}
\le
 \|\widehat{\zl}(t)\|_{L^1}+ \|\widehat{\nabla \zl}(t)\|_{L^1}\\
&\leq\int_{|\xi|\le 1}\big(|K(\xi,t/2) \widehat{\zl}(t/2)|+|\xi|\,|K(\xi,t/2)\widehat{\zl}(t/2)|\big)
+\int_{|\xi|\ge1}\big(|K(\xi,t) \widehat{z_0}|+|\xi|\, |K(\xi,t)\widehat{z_0}|\big)
\\
&\leq \|K(\xi,t/2)\|_{L^2(|\xi\leq1)}\|\zl(t/2)\|_{L^2}
+\|K(\xi,t)\|_{L^\infty(|\xi\geq1)}\|(1+|\xi|)\widehat{z_0}\|_{L^1(|\xi|\geq1)}\\
&\lesssim\|e^{-\frac{c}{2}t|\xi|^2}\|_{L^2}\|\zl(t/2)\|_{L^2}+e^{-ct}\|(1+|\xi|)\widehat{z_0}\|_{L^1}\\
&\lesssim t^{-\frac12}t^{-\frac\Gamma2} +e^{-ct}\|(1+|\xi|)\widehat{z_0}\|_{L^1}\\
&\lesssim t^{-\frac12-\frac\Gamma2}.
\end{split}
\]

Let us now consider the case $\gamma>0$. In this case, as observed in Proposition \ref{sim-heat},
the Gaussian bound for the symbol $K(\cdot,t)$ is available for all $\xi$ and not only for $|\xi|\leq1$. So we can estimate as above without splitting in two pieces depending on $|\xi|\leq1$ or $|\xi|\geq1$:
\begin{equation*}
\|\zl(t)\|_{L^\infty}+\|\nabla \zl(t)\|_{L^\infty}
\leq \|(1+|\xi|)K(\xi,t/2)\|_{L^2}\|\zl(t/2)\|_{L^2}
\lesssim (t^{-\frac12}+t^{-1})t^{-\frac\Gamma2}
=O(t^{-1/2-\Gamma/2})
\end{equation*}
as $t\to+\infty$.
The conclusion of Lemma~\ref{lem:degr} follows.
\end{proof}

We continue with the proof of Part~ii) of Theorem~\ref{th:decay}.
Let us write an energy equality satisfied by the difference 
$z-\zl=(u-\ul,h-\hl)$ between a finite energy solution to~\eqref{MP2D}
and the corresponding linear system~\eqref{sysL}.
For the sake of the simplicity of the presentation, we proceed formally, but
the energy inequality~\eqref{EID} below can be established rigorously
with the argument of Section~\ref{sec:existence}.
The couple $(u-\ul,h-\hl)$ verifies
\begin{equation}
\label{sys:diff}
\left\{
\begin{aligned}
&\partial_t(u-\ul)+u\cdot \nabla u+\nabla p=(\mu+\chi)\Delta (u-\ul)
	-2\chi\nabla^\perp (h-\hl),\\
&\partial_t(h-\hl)+(u\cdot\nabla)h=\gamma\Delta (h-\hl)+ 2\chi\curl(u-\ul)-4\chi (h-\hl),\\
&\nabla\cdot (u-\bar u)=0.
\end{aligned}
\right.
\end{equation}

We argue now that we can make $L^2$ energy estimates above: multiply the first equation by $u-\ul$, the second equation by $h-\hl$, and integrate in space and time from $s$ to $t$. Doing that requires in fact to multiply the equation of $z$ by $z$, the equation of $z$ by $\zl$, the equation of $\zl$ by $z$ and the equation of $\zl$ by $\zl$. Multiplying the equation of $z$ by $z$ and integrating in space and time is justified; this is the content of Proposition~\ref{prop-energ}. The other three operations are also justified because $\zl$ is smooth. We conclude that we can multiply the equation of $z-\zl$ by $z-\zl$ and integrate in space and time. 

We estimate 
\begin{equation}
\label{estinte1}
\Bigl|\int (u\cdot\nabla u)\cdot(u-\ul)(t)\Bigr| 
\le \|\nabla(u-\ul)(t)\|_{L^2}\|u(t)\|_{L^2}\|\ul(t)\|_{L^\infty}
\end{equation}
and 
\begin{equation}
\label{estinte2}
\Bigl|\int (u\cdot\nabla h)\cdot(h-\hl)(t)\Bigr| 
\le \|(h-\hl)(t)\|_{L^2}
\|u(t)\|_{L^2}\|\nabla \hl(t)\|_{L^\infty}.
\end{equation}
To be precise, the integral above is not necessarily convergent because $u$ and $h$ are not regular enough to be able to say that $\iint  u\cdot\nabla h\cdot h=0$. But this possibly divergent integral can be neglected as a consequence of Proposition~\ref{prop-energ}.

Multiplying the equation for $z-\zl=(u-\ul,h-\hl)$ given in \eqref{sys:diff} by $z-\zl$, integrating in space and time from $s$ to $t$ and using \eqref{estinte1} and \eqref{estinte2} results in
\begin{multline*}
\nl2{(z-\zl)(t)}^2+2\mu\int_s^t\nl2{\nabla (u-\ul)}^2+2\gamma\int_s^t\nl2{\nabla (h-\hl)}^2
+2\chi\int_s^t\nl2{\curl (u-\ul)-2(h-\hl)}^2\\
\leq \nl2{(z-\zl)(s)}^2
+2\int_s^t\|\nabla(u-\ul)\|_{L^2}\|u\|_{L^2}\|\ul\|_{L^\infty}+2\int_s^t \|h-\hl\|_{L^2}
\|u\|_{L^2}\|\nabla \hl\|_{L^\infty}
\end{multline*}
for all $0< s\leq t<\infty$.

The term with $\gamma$ we ignore. The term with $\chi$ we write
\begin{align*}
2\chi\nl2{\curl (u-\ul)-&2(h-\hl)}^2\\
&=2\chi\nl2{\curl (u-\ul)}^2+8\chi\nl2{h-\hl}^2-8\chi\int \curl (u-\ul)(h-\hl)\\
&\geq -a \nl2{\curl (u-\ul)}^2+b\nl2{h-\hl}^2\\
&= -a \nl2{\nabla (u-\ul)}^2+b\nl2{h-\hl}^2
\end{align*}
where $a,b>0$ must verify $16\chi^2\leq(2\chi+a)(8\chi-b)$. One can choose such $a,b$ and assume in addition that $a<2\mu$. We get
\begin{align*}
\nl2{(z-&\zl)(t)}^2+(2\mu-a)\int_s^t\nl2{\nabla (u-\ul)}^2 +b\nl2{h-\hl}^2\\
&\leq \nl2{(z-\zl)(s)}^2
+2\int_s^t\|\nabla(u-\ul)\|_{L^2}\|u\|_{L^2}\|\ul\|_{L^\infty}+2\int_s^t \|h-\hl\|_{L^2}
\|u\|_{L^2}\|\nabla \hl\|_{L^\infty}
\end{align*}
Applying Young inequality, we can find $c_0=c_0(\chi,\mu)>0$
and $c_1(\chi,\mu,s_0)>0$ such that, for any $0< s_0\leq s \le t$
\begin{equation}
\label{EID}
\begin{split}
&\|(z-\zl)(t)\|_{L^2}^2- \|(z-\zl)(s)\|_{L^2}^2
+c_0\int_s^t\|\nabla (u-\ul)(r)\|_{L^2}^2\dd r
  + c_0\int_s^t\|(h-\hl)(t)\|_{L^2}^2\\ 
&\hskip3cm
\lesssim \int_s^t\|u(t)\|_{L^2}^2\Bigl(\|\ul(t)\|_{L^\infty}^2
+\|\nabla\hl\|_{L^\infty}^2\Bigr)\dd r\\
&\hskip3cm
\le c_1\int_s^t (1+r)^{-1-2\Gamma}\dd r,
\end{split}
\end{equation}
where in the last inequality we applied Lemma~\ref{lem:degr} and Part~i)
of Theorem~\ref{th:decay}. We excluded $s_0=0$ because of the 
possible singularity as $t\to0$ of $\|\ul(t)\|_{L^\infty}$ and $\|\nabla \hl(t)\|_{L^\infty}$.

Let us now choose a splitting function of the form
$g(t)^2=\frac{\alpha}{c_0}(t+A)^{-1}$, where $\alpha=3$ (any large enough 
constant $\alpha$ would do) and $A\ge1$ is such that $g(t)\le 1$ for all
$t\ge0$.
Let us set
\[
\begin{split}
I_{u-\ul}(t)&:=\int_{|\xi|\le g(t)} |\widehat{u}-\widehat{\ul}\,|^2(\xi,t)\dd\xi,\qquad\text{and}\\
I_{z-\zl}(t)&:=\int_{|\xi|\le g(t)} |\widehat{z}-\widehat{\zl}\,|^2(\xi,t)\dd\xi.
\end{split}
\]
Applying the Fourier splitting method, using the same trick as in~\eqref{FS-meth}
and the fact that $g(t)\le1$, we obtain the following version of \eqref{62bis} written for $z-\zl$:
\[
\begin{split}
\|(z-\zl)(t)\|_{L^2}^2- \|(z-\zl)(s)\|_{L^2}^2
&+c_0\int_s^t g(r)^2\|(z-\zl)(r)\|_{L^2}^2\dd r\\
&\le 
\frac{c_0}{4\pi}\int_s^t g(r)^2I_{u-u_L}(r)\dd r
+c_1\int_s^t (1+r)^{-1-2\Gamma}\dd r\\
&\le 
\frac{c_0}{4\pi}\int_s^t g(r)^2I_{z-z_L}(r)\dd r
+c_1\int_s^t (1+r)^{-1-2\Gamma}\dd r.
\end{split}
\]
Applying Lemma~\ref{Brando_Revista} with 
$e(t):=\exp(\int_{s_0}^t c_0g(s)^2\dd s)=\frac{(t+A)^\alpha}{(s_0+A)^\alpha}$,
we deduce that, for all $t\ge s_0>0$,
\begin{equation}
\|(z-z_L)(t)\|_{L^2}^2 \,e(t)
\lesssim 
1+\int_{s_0}^t e(s)g(s)^2I_{z-\zl}(s)\dd s+\int_{s_0}^t e(s)(1+s)^{-1-2\Gamma}\dd s.
\end{equation}

From Duhamel formula, since $z-\zl$ vanishes at $t=0$, we have
\begin{equation*}
(\widehat z-\widehat{\zl}\,)(\xi,t)=
 	\int_0^t  K(\xi,t-s)\mathcal{F}
  	\begin{pmatrix}
	-\P\nabla\cdot(u\otimes u)\\
	-\nabla\cdot(u\otimes h)
	\end{pmatrix}(\xi,s)
	\dd s.	
\end{equation*}
Hence, using the boundedness of $K$ (see Proposition \ref{sim-heat}),
\[
\begin{split}
|\widehat z-\widehat{\zl}|(\xi,t)
 	&\lesssim|\xi|\int_0^t 
 	\bigl(\|u(s)\|_{L^2}^2+\|u(s)\|_{L^2}\|h(s)\|_{L^2}\bigr)\dd s
\lesssim
 |\xi|\int_0^t (1+s)^{-\Gamma}\dd s.
\end{split}
\]
So
\[
I_{z-\zl}(t)\lesssim g(t)^4\Bigl(\int_0^t (1+s)^{-\Gamma}\dd s\Bigr)^2.
\]
Therefore, for all $t\ge s_0>0$,
\[
\|(z-\zl)(t)\|_{L^2}^2(t+A)^\alpha
\lesssim
1+\int_{s_0}^t (s+A)^{\alpha-3} 
\Bigl(\int_{0}^s(1+r)^{-\Gamma}\dd r \Bigr)^2\dd s
+\int_{s_0}^t(s+A)^{\alpha-1-2\Gamma}\dd s.
\]
So, the first part of Conclusion~ii) of Theorem~\ref{th:decay} follows.

\medskip
For the last assertion of Conclusion~ii), namely
the faster decay for $\|h-\hl\|_{L^2}^2$, we need to assume $\gamma>0$.
Applying Duhamel's principle to $h$ and $\hl$ using the  semigroup $e^{t(\gamma  \Delta-4\chi)}$, we obtain  
\begin{equation*}
\begin{split}
    h(t) &= e^{-4\chi\,t} e^{\gamma\,t\,\Delta} h_0 
    - \int_0^t e^{-4\chi(t-s)}\,e^{\gamma(t-s)\Delta}\,(u\cdot\nabla h)(s) \dd s 
    + 2\chi\, \int_0^t e^{-4\chi(t-s)}\,e^{\gamma(t-s)\Delta}\,\curl u (s)  \dd s,
    \\ 
     \hl(t) &= e^{-4\chi\,t} e^{\gamma\,t\,\Delta} h_0  
    + 2\chi\, \int_0^t e^{-4\chi(t-s)}\,e^{\gamma(t-s)\Delta}\,\curl \ul (s)  \dd s.
\end{split}
\end{equation*}
Therefore,
\begin{equation}\label{difference-h}
\begin{split}
    \|h(t)-\hl(t)\|_{L^2} 
    &\leq 
    \int_0^t e^{-4\chi(t-s)}\,\|e^{\gamma(t-s)\Delta}\,(u\cdot\nabla h)(s) \|_{L^2} \dd s 
    \\ 
    &\qquad\qquad
    + 2\chi\, \int_0^t e^{-4\chi(t-s)}\,\|e^{\gamma(t-s)\Delta}\,(\curl u - \curl \ul)(s) \|_{L^2} \dd s\\
       &=: \mathcal{J}_1(t) + \mathcal{J}_2(t) .
\end{split}
\end{equation}
To go further, we need to estimate $ \|\nabla z(t)-\nabla \zl(t)\|^2_{L^2}$.
We make use of the integral representation of $z(t)$ and 
$\zl(t)= e^{tM}z_0$. By the Duhamel formula applied to \eqref{MP2D} with the semigroup $e^{tM}$ we obtain
\[
\nabla z(t)-\nabla \zl(t)=\int_0^t \nabla e^{(t-s)M}
\begin{pmatrix}
	-\P\nabla\cdot(u\otimes u)\\
	-\nabla\cdot(u\otimes h)
	\end{pmatrix}(s)
	\dd s.
\]
Applying the $L^1$-$L^2$ and the $L^2$-$L^2$ estimates for $e^{tM}$ (see Proposition \ref{sim-heat} and relation \eqref{pl1}) we deduce
\begin{equation*}
    \begin{split}
        \|\nabla z(t)-\nabla \zl(t)\|_{L^2} 
        &\leq \sum_{j=1}^2\int_0^{\frac t2} \Bigl\|\nabla \partial_j e^{(t-s)M} 
        \begin{pmatrix}
	    \P(u_j u)\\
	    u_j h
	    \end{pmatrix}(s)
        \Bigr\|_{L^2} \dd s
+\sum_{j=1}^2\int_{\frac t2}^t\Bigl\| e^{(t-s)M} \begin{pmatrix}
	\P\partial_j(u\cdot\nabla u)\\
	\partial_j(u\cdot\nabla h)
	\end{pmatrix}(s) \Bigr\|_{L^2} \dd s
\\
        &\lesssim 
        \int_0^{t/2} (t-s)^{-3/2} \|z(s)\|_{L^2}^2 \dd s
         +\sum_{\ell=0}^1 \int_{t/2}^t \|\nabla^\ell z \|_{L^\infty} \|\nabla^{2-\ell} z \|_{L^2} \dd s\\
        &\lesssim 
        t^{-3/2}\int_0^{t/2} (1+s)^{-\Gamma} \dd s 
         +\sum_{\ell=0}^1 \int_{t/2}^t \|\nabla^\ell z \|_{L^\infty} \|\nabla^{2-\ell} z \|_{L^2} \dd s. 
            \end{split}
\end{equation*}

In the case $\gamma >0 $, the authors of \cite{Cesar-RG-Zingano-CP2023} proved the following result: under the assumption that 
$\|\zl(t)\|^2_{L^2} = O(t^{-\Gamma})$ as $t\to+\infty$, we have for all $j\in\N$
\begin{equation}\label{ref32}
\|\nabla^j\,z(t)\|^2_{L^2} = O(t^{-\Gamma - j})
\quad\text{and}\quad
\|\nabla\,h(t)\|^2_{L^2} = O(t^{-\Gamma - 2})
\quad\text{as }t\to \infty.
\end{equation}
Using the Gagliardo-Nirenberg inequality $\|f\|_{L^\infty}\leq C\|f\|_{L^2}^{\frac12}\|\nabla^2 f\|_{L^2}^{\frac12}$ we deduce that $\|\nabla^\ell z(t)\|_{L^\infty}=O(t^{-\Gamma/2-(\ell+1)/2})$ as $t\to \infty$.
Therefore, $\|\nabla^\ell z(t) \|_{L^\infty} \|\nabla^{2-\ell} z(t) \|_{L^2} = O(t^{-\Gamma-3/2})$ for $\ell = 0, 1$, which leads us to
\begin{equation*}
    \|\nabla z(t)-\nabla \zl(t)\|_{L^2}
    \leq C t^{-3/2}\int_0^{t/2} (1+s)^{-\Gamma} \dd s 
    + Ct^{-\Gamma- 1/2}
\end{equation*}
for $t$ sufficiently large.
This gives, for $t$ large enough,
$\|\nabla z(t)-\nabla \zl(t)\|^2_{L^2} \lesssim t^{-1}\zeta_\Gamma(t)$, 
where 
\[
\zeta_\Gamma(t):=
\begin{cases}
(1+t)^{-2\Gamma}& 0\le\Gamma< 1\\
(1+t)^{-2}(\log(e+t))^2& \Gamma=1\\
(1+t)^{-2}& 1<\Gamma\le 2.
\end{cases}
\]

We now turn our attention to the expression \eqref{difference-h}. Let us bound first $\mathcal{J}_1(t)$. Using once again \eqref{ref32}, the $L^\infty$ estimates proved above, the boundedness of $u$ and $h$ in $L^2$ and the basic properties of the heat semigroup, we have  
\begin{equation*}
\begin{split}
    \mathcal{J}_1(t) 
    &\lesssim \,e^{-2\chi\,t} \int_0^{t/2} (t-s)^{-1} \| u(s)\|_{L^2} \| h(s)\|_{L^2} \dd s 
    +     \int_{t/2}^t e^{-4\chi\,(t-s)}\,\| u(s)\|_{L^\infty} \|\nabla h\|_{L^2} \dd s \\
    &\lesssim 
    C\,e^{-2\chi\,t} + \int_{t/2}^t e^{-4\chi\,(t-s)}\,s^{-\Gamma - 3/2} \dd s
    \\
    &\lesssim t^{-\Gamma - 3/2}.
\end{split}
\end{equation*}
For the term $\mathcal{J}_2(t)$, similarly, we split the integral at $t/2$ and we obtain, for $t\ge1$,  
\begin{equation*}
\begin{split}
    \mathcal{J}_2(t) 
    &\lesssim
    e^{-2\chi\,t} \int_0^{t/2} (t-s)^{-1/2}\,\| z_0\|_{L^2} \dd s + 
     \int_{t/2}^t e^{-4\chi\,(t-s)}\,\| \nabla z(s) - \nabla \zl(s)\|_{L^2} \dd s     \\
    &\lesssim 
    \,t^{1/2}e^{-2\chi\,t} + \int_{t/2}^t e^{-4\chi\,(t-s)}\,s^{-1/2} \zeta_\Gamma(s)^{1/2}\dd s\\
    &\lesssim t^{-1/2}\zeta_\Gamma^{1/2}(t).
\end{split}
\end{equation*}
This proves estimate~\eqref{diff-h} for $t\ge1$ and the case $0\le t\le 1$ is settled
using that $\|h(t)\|_{L^2}+\|\hl(t)\|_{L^2}\lesssim \|z_0\|_{L^2}$.

Conclusion~ii) of Theorem~\ref{th:decay} follows.

\section{Various consequences of Theorem~\ref{th:decay} and conclusions}
\label{sec:concl}

\subsection*{Classes of data leading to algebraic decay}
Theorem~\ref{th:decay} is based on the assumption that $\|\ul\|_{L^2}^2$
has an algebraic decay:
\begin{equation}
\label{decayhyp}
\|u_L(t)\|_{L^2}^2\lesssim (1+t)^{-\Gamma}.
\end{equation}
Here we discuss some classes of initial data for which
such a condition holds.

Let us assume $(u_0,h_0)\in  L^2_\sigma(\R^2)\times L^2(\R^2)$.
A first elementary observation is that,
if the solution of the heat equation with data $z_0=(u_0,h_0)$
has an algebraic decay in the $L^2$-norm, then the solution $\zl$ to our linear problem
satisfies the same algebraic decay. 
Namely, if
\[
\|e^{t\Delta}z_0\|_{L^2}^2\lesssim (1+t)^{-\Gamma}
\]
then \eqref{decayhyp} holds true.
Indeed, applying Proposition \ref{sim-heat} for the symbol $K(\cdot,t)$, we see that
\[
\|\zl(t)\|_{L^2}^2\lesssim \|K(\cdot,t)\widehat z_0\|_{L^2}^2
\lesssim
 \|e^{ct\Delta}z_0\|_{L^2}^2+e^{-2ct}\|z_0\|_{L^2}^2
 \lesssim (1+t)^{-\Gamma}.
\]

But in fact, we can give a less restrictive condition. We observed (see the beginning of the proof  of Part i) of Theorem~\ref{th:decay} on page \pageref{startproof}) that \eqref{decayhyp} is equivalent to
\begin{equation}
\label{decayhyp2}
\|e^{\mu t\Delta}(u_0-\frac12\nabla^\perp h_0)\|_{L^2}^2=O(t^{-\Gamma})
\quad\text{as }t\to+\infty.
\end{equation}
Let us assume that
\begin{equation}
\label{decayhyp3}
\|e^{t\Delta}u_0\|_{L^2}^2\lesssim (1+t)^{-\Gamma}.
\end{equation}
By the decay estimates of the heat semigroup, we know that
\begin{equation*}
\|\nabla e^{\mu t\Delta}h_0)\|_{L^2}^2=O(t^{-1})\quad\text{as }t\to+\infty.
\end{equation*}
So in the case $0\leq\Gamma\leq1$, relation  \eqref{decayhyp3} implies \eqref{decayhyp2}, so it also implies \eqref{decayhyp}.

In the case $\Gamma\geq1$ we need to add the assumption
\begin{equation}
\label{decayhyp4}
\|e^{t\Delta}h_0\|_{L^2}^2\lesssim (1+t)^{-\Gamma+1}.
\end{equation}
Indeed, we then have
\begin{equation*}
\|\nabla e^{t\Delta}h_0\|_{L^2}^2
=\|\nabla e^{\frac t2 \Delta}e^{\frac t2 \Delta}h_0\|_{L^2}^2
\lesssim t^{-1}\|e^{\frac t2 \Delta}h_0\|_{L^2}^2
\lesssim t^{-1}(1+\frac t2)^{-\Gamma+1}
=O((1+t)^{-\Gamma})
\quad\text{as }t\to+\infty.
\end{equation*}

To summarize, we proved in the case $0\leq\Gamma\leq1$ that relation  \eqref{decayhyp3} implies \eqref{decayhyp}, and in the case  $\Gamma\geq1$ relations \eqref{decayhyp3} and \eqref{decayhyp4} imply \eqref{decayhyp}.

Now, it is well known that algebraic decays for the solutions of the heat
equations can be achieved if (and only if) the initial data belong to
Besov spaces with negative regularity.
(See~\cite[Chapter~2]{BahCD11}).
Therefore, the above mentioned implications can be reformulated as follows:
\begin{alignat*}{3}
\forall \,0\le \Gamma&\le1,\quad
u_0\in L^2_\sigma(\R^2)\cap \dot B^{-\Gamma}_{2,\infty}(\R^2),\;
h_0\in L^2(\R^2)
&\;\Longrightarrow\;
\eqref{decayhyp}\text{ and }\eqref{decayhyp2},\\
\forall \,\Gamma&\ge1,\quad
u_0\in L^2_\sigma(\R^2)\cap \dot B^{-\Gamma}_{2,\infty}(\R^2),\;
h_0\in L^2(\R^2)\cap \dot B^{-\Gamma+1}_{2,\infty}(\R^2)
&\;\Longrightarrow\;
\eqref{decayhyp}\text{ and }\eqref{decayhyp2}.
\end{alignat*} 
For example, elementary situations for which Theorem~\ref{th:decay} applies are the following:
\begin{enumerate}
\item 
\label{RR1}
If $u_0\in L^1(\R^2)\cap L^2_\sigma(\R^2)$ and $h_0\in 
L^2(\R^2)$, then
Theorem~\ref{th:decay} applies with $\Gamma=1$.
In general, if $1\le p\le2$,  $u_0\in L^p(\R^2)\cap L^2_\sigma(\R^2)$ and $h_0\in L^2(\R^2)$,
then Theorem~\ref{th:decay} applies with $\Gamma=\frac2p-1$.
\item
\label{RR2}
If $u_0\in  L^1(\R^2,(1+|x|)\dd x)\cap L^2_\sigma(\R^2)$
and $h_0\in L^1(\R^2)\cap L^2(\R^2)$, then Theorem~\ref{th:decay} applies with $\Gamma=2$.
\end{enumerate}
Indeed, claim~\eqref{RR1} follows from the usual $L^p$-$L^2$ heat kernel
estimates. Claim~\eqref{RR2} follows from the explicit representation of $e^{t\Delta}u_0$
in terms of the Gaussian 
$G_t(x)=(4\pi t)^{-1}\exp(-|x|^2/(4t))$, namely  
$e^{t\Delta}u_0(x)=\int G_t(x-y)u_0(y)\dd y$,
using also the fact that $\int u_0=0$ because of the divergence-free condition. (See e.g., \cite{Fujigaki-Miyakawa-SIAM}).

\subsection*{Asymptotics for solutions of the Micropolar system in the case
$\gamma>0$}

In this subsection, we assume the spin viscosity $\gamma$ to be strictly positive.
Let $0\le\Gamma\le2$ and let us consider an initial datum $z_0=(u_0,h_0)$, such that
    \begin{equation}
\label{Assug}
    (u_0,h_0)\in  L^2_\sigma(\R^2)\times L^2(\R^2)
\quad\text{and}\quad
    \|\ul\|_{L^2}^2\lesssim(1+t)^{-\Gamma}.
\end{equation}
See the previous subsection for explicit classes of initial data for which an $L^2$-algebraic decay for  $\ul$ holds.

From Theorem~\ref{th:linear} we obtain, for all $t\ge1$,
\[
\|u(t)-e^{\mu t\Delta}u_0+\frac12\nabla^\perp e^{\mu t\Delta}h_0\big\|_{L^2}^2
\lesssim
\|u(t)-\ul(t)\|_{L^2}^2+ t^{-2} \|u_0\|_{L^2}^2+t^{-3}\|h_0\|_{L^2}^2
\]
and
\[
\big\|h(t)- \frac12\curl e^{\mu t\Delta}u_0+\frac14\Delta e^{\mu t\Delta}h_0\big\|_{L^2}^2
\lesssim
\|h(t)-\hl(t)\|_{L^2}^2 + t^{-3} \|u_0\|_{L^2}^2+t^{-4} \|h_0\|_{L^2}^2.
\]
Let us recall that we set
\[
\zeta_\Gamma(t):=
\begin{cases}
(1+t)^{-2\Gamma}& 0\le\Gamma< 1\\
(1+t)^{-2}(\log(e+t))^2& \Gamma=1\\
(1+t)^{-2}& 1<\Gamma\le 2.
\end{cases}
\]
By the last part of Theorem~\ref{th:decay},  for $t\ge1$,
\begin{equation}
\label{corasy}
\begin{split}
\|u(t)-e^{\mu t\Delta}u_0
 +\frac12\nabla^\perp e^{\mu t\Delta}h_0\|_{L^2}^2 
&\lesssim \zeta_\Gamma(t)\\
\big\|h(t)- \frac12\curl e^{\mu t\Delta}u_0
+\frac14\Delta e^{\mu t\Delta}h_0\big\|_{L^2}^2
&\lesssim t^{-1}\zeta_\Gamma(t).
\end{split}
\end{equation}
Let us now distinguish several cases, depending on the range of $\Gamma$.

\subsubsection*{Case $0<\Gamma\le\frac12$.}

We have in this case
 $\|\nabla^\perp e^{\mu t\Delta}h_0\|_{L^2}^2=O(t^{-1})=O(\zeta_\Gamma(t))$
  as $t\to+\infty$.
So this term can be incorporated inside the remainder of expansion~\eqref{corasy}.
In the same way, $\|\frac14\Delta e^{\mu t\Delta}h_0\|_{L^2}^2=O(t^{-2})
=O(t^{-1}\zeta_\Gamma(t))$, as $t\to+\infty$,
so this term can also be included inside the remainders.
Hence, for any finite energy solution $(u,h)$  to the nonlinear problem~\eqref{MP2D} that arises from a datum~$(u_0,h_0)$
satisfying~\eqref{Assug},
we obtain
\[
\begin{split}
\|u(t)-e^{\mu t\Delta}u_0\|_{L^2}^2
&\lesssim 
(1+t)^{-2\Gamma}.\\
\big\|h(t)- \frac12\curl e^{\mu t\Delta}u_0 \big\|_{L^2}^2
&\lesssim
(1+t)^{-2\Gamma-1}.
\end{split}
\]
In other words, we obtain the linear asymptotics 
\begin{subequations}
\label{expan}
\begin{equation}
u(t)\stackrel{t\to+\infty}{\approx} e^{\mu t\Delta}u_0
\qquad
\text{and}
\qquad
h(t)\stackrel{t\to+\infty}{\approx} \frac12\curl e^{\mu t\Delta}u_0
\end{equation}
with remainders of order $O(t^{-\Gamma})$, respectively  $O(t^{-\Gamma-\frac12})$, in the $L^2$-norm. For such slow decay rates, our results do not allow us to go beyond this first-order asymptotics for the velocity component
of the solution to the nonlinear problem~\ref{MP2D}.

\subsubsection*{Case $\frac12<\Gamma< 2$.}
In this case,
applying~\eqref{corasy}, we obtain a second-order asymptotics in $L^2$
for the velocity component of a finite energy solution to~\eqref{MP2D}:
\begin{equation}\label{expan2}
u(t)\stackrel{t\to+\infty}{\approx} e^{\mu t\Delta}u_0 -\frac12\nabla^\perp e^{\mu t\Delta}h_0
\qquad
\text{and}
\qquad
h(t)\stackrel{t\to+\infty}{\approx} \frac12\curl e^{\mu t\Delta}u_0
-\frac14\Delta e^{\mu t\Delta}h_0
\end{equation}
\end{subequations}
with remainders of order $O(t^{-\min(1,\Gamma)})$, respectively  $O(t^{-\min(1,\Gamma)-\frac12})$ (with an additional logarithmic factor if $\Gamma=1$), in the $L^2$-norm.
\subsubsection*{Case $\Gamma\ge2$.}
When $\Gamma\ge2$ and the data satisfy~\eqref{Assug}, the results of this paper do not allow to go beyond \eqref{expan2} and find the next term in the asymptotic expansion as $t\to\infty$. We still have  \eqref{expan2} with remainders of order $O(t^{-1})$, respectively  $O(t^{-\frac32})$ but nothing more. The derivation of a \emph{nonlinear asymptotic profile} for the solution
$(u,h)$ to~\eqref{MP2D} in this fast-decaying regime is an interesting problem to address, but it is outside the scope of this paper.
It is easy to construct examples of initial data satisfying~\eqref{Assug}
with $\Gamma\gg2$, by considering initial data
such that $(\widehat u_0,\widehat h_0)(\xi)$ vanish at $\xi=0$ at a sufficiently high order and relying on the observations from the previous paragraph. But
despite the fast decay of the linear solution  $\zl$, the corresponding solutions $z=(u,h)$ of the nonlinear problem~\eqref{MP2D} are not expected to decay in the $L^2$-norm faster than $O(t^{-1})$ as $t\to+\infty$, in general.

\begin{remark}
Under more stringent conditions on the data, one could go further and make even more explicit the asymptotic expansions~\eqref{expan}, by using the classical results available for describing the large time behavior of 
the solutions of the heat equation. See, e.g., \cite{Zuazua1}.
For example, 
if $h_0\in L^1(\R^2)$, and $\int h_0\not=0$, then one can use the well known fact that
\[
\textstyle
\nabla^\perp e^{\mu t\Delta}h_0\stackrel{t\to+\infty}\approx 
(\int h_0)\nabla^\perp G_{\mu t}(\cdot),
\qquad
\text{and}
\qquad
\frac14\Delta e^{\mu t\Delta}h_0
\stackrel{t\to+\infty}\approx \frac14(\int h_0)\Delta G_{\mu t}(\cdot).
\]
in the $L^2$-norm, where $G_t(x)=(4\pi)^{-1}\exp(-|x^2|/(4t))$,
is the Gaussian kernel.
It would be tempting to make use of a similar observation for expanding
$e^{\mu t\Delta}u_0$, but one should take into account that,
if $u_0\in L^1(\R^2)$, then the divergence-free condition implies that
$\int u_0=0$: for this reason, different conditions on~$u_0$ (e.g., a prescribed behavior for $\widehat u_0(\xi)$ as $\xi\to0$), are better suited to single out the main term of the asymptotics of $e^{\mu t\Delta}u_0$.
\end{remark}

\subsection*{Asymptotics for solutions of the Micropolar system in the case
$\gamma=0$}

When $\gamma=0$, our assumptions on the initial data need to be more stringent. In addition to condition~\eqref{Assug}, we also require that $\int(1+|\xi|)|\widehat z_0(\xi)|\dd\xi<\infty$. With this additional assumption, we have the same asymptotics for the velocity $u$ as in the case $\gamma>0$, but the asymptotic results will be weaker for $h$. Instead of~\eqref{corasy}, in the case $\gamma=0$ the application of Theorem~\ref{th:decay}
implies the following weaker estimate for $h$:
\begin{equation}
\label{corasy0}
\begin{split}
\big\|h(t)- \frac12\curl e^{\mu t\Delta}u_0&\big\|_{L^2}^2
\lesssim 
\zeta_\Gamma(t).
\end{split}
\end{equation}
Notice that we dropped the term $\frac14\Delta e^{\mu t\Delta}h_0$, because 
$\|\frac14\Delta e^{\mu t\Delta}h_0\|_{L^2}^2=O(t^{-2})$ can always be included in the remainders.

\subsubsection*{Case $0\le\Gamma\le\frac12$.}
Since
$\|\curl e^{\mu t\Delta}u_0\|_{L^2}^2=O(t^{-1})$,
we have that \eqref{corasy0} is equivalent to
\[
\|h(t)\|_{L^2}^2 \lesssim(1+t)^{-2\Gamma}, 
\]
but, contrary to $u$, no explicit asymptotic profile is available for $h$ in this slow decay regime, when $\gamma=0$.

\subsubsection*{Case $\frac12<\Gamma\leq2$.}
In this range, we obtain a first order asymptotics for $h$:
\[
h(t)\approx \textstyle\frac12\curl e^{\mu t\Delta}u_0
\quad\text{in $L^2$}
\]
with a remainder of order $O(t^{-\min(1,\Gamma)})$ (with an additional logarithmic factor if $\Gamma=1$).

\bigskip
 
\subsection*{Further consequences}
An interesting byproduct of our analysis is the existence of solutions
to the micropolar equations \emph{dissipating their energy faster} than the
corresponding solutions of the Navier-Stokes system, with the same
initial velocity $u_0$. In other words, there are situations in which
the presence of microrotational effects considerably enhances the dissipation,
slowing down the fluid motion for large times.
For example, let $1<\Gamma<2$, and
let us consider the initial datum $u_0\in L^2_\sigma(\R^2)$, defined by: 
\[
\widehat u_0(\xi)=\frac12\begin{pmatrix}-i\xi_2\\i\xi_1\end{pmatrix}
|\xi|^{-2+\Gamma}\Phi(\xi),
\]
where $\Phi$ is an arbitrary function, continuous in $\R^2$, such that $\Phi(0)\not=0$ and $\Phi(\xi)=O(|\xi|^{-2})$
as $|\xi|\to+\infty$. The latter condition is just to ensure that $\widehat u_0\in L^2(\R^2)$.
Then the solution of the heat equation satisfies (see, e.g., \cite[Theorem~3.2]{Bra-Per-Zin}):
\[
 (1+t)^{-\Gamma}\lesssim \|e^{\mu t\Delta}u_0\|_{L^2}^2 \lesssim (1+t)^{-\Gamma}.
\]
By Wiegner's theorem, see~\cite{Wie87}, the solution to the Navier-Stokes $u_{\text{NS}}$ arising from~$u_0$
satisfies
\begin{equation}
\label{uNS}
 (1+t)^{-\Gamma}\lesssim \|u_{\text{NS}}(t)\|_{L^2}^2 \lesssim (1+t)^{-\Gamma}.
\end{equation}

Now, let us assume that $h_0\in L^2(\R^2)$ has the form 
\[
\widehat h_0(\xi)=|\xi|^{-2+\Gamma}\Phi(\xi).
\]

Clearly $u_0-\frac12\nabla^\perp h_0=0$ so relation \eqref{ul11} implies that $\|\ul(t)\|_{L^2}^2\lesssim (1+t)^{-2}$. Moreover, we can apply Theorem \ref{th:decay} for $\Gamma=2$ and obtain that $\|u(t)-\ul(t)\|_{L^2}^2\lesssim (1+t)^{-2}$. So $\|u(t)\|_{L^2}^2\lesssim (1+t)^{-2}$ which, in view of \eqref{uNS}, shows that $u(t)$ decays indeed faster than $u_{\text{NS}}(t)$ in the $L^2$-norm.

\vspace{.5cm}

\scriptsize{
\textbf{Acknowledgments.}  
C. F. Perusato was partially supported by CNPq grant No. 200124/2024-2 and CNPq research fellowship No. 310444/2022-5. He would also like to express his gratitude for the warm hospitality and fruitful discussions during his one-year stay at Université Lyon 1 (Institut Camille Jordan), where this work was done.
Research partially supported by CAPES/COFECUB project N.1040-24 WINPDE. L. Brandolese was partially supported also by ANR-25-CE40-4532.

\medskip

\begin{description}
\item[Lorenzo Brandolese] Institut Camille Jordan. 
		Université de Lyon, Université Claude Bernard Lyon~1, 
		69622 Villeurbanne Cedex. France\\
Email: \texttt{brandolese@math.univ-lyon1.fr}
\item[Adriana Valentina Busuioc] Université Jean Monnet, CNRS, Centrale Lyon, INSA Lyon, Universite Claude Bernard Lyon 1, ICJ UMR5208, 42023 Saint-Etienne, France. \\
Email: \texttt{valentina.busuioc@univ-st-etienne.fr}\\
Web page: \texttt{https://perso.univ-st-etienne.fr/busuvale/}
\item[Dragoş Iftimie] Institut Camille Jordan. 
		Université de Lyon, Université Claude Bernard Lyon~1, 
		69622 Villeurbanne Cedex. France\\
Email: \texttt{iftimie@math.univ-lyon1.fr}\\
Web page: \texttt{http://math.univ-lyon1.fr/\~{}iftimie}
\item[Cilon F. Perusato] Departamento de Matem\'atica. 
		Universidade Federal de Pernambuco, Recife, PE 50740-560. Brazil \\ and Institut Camille Jordan, 
		Université de Lyon, Université Claude Bernard Lyon~1, 
		69622 Villeurbanne Cedex. France
Email: \texttt{perusato@math.univ-lyon1.fr}
\end{description}


\begin{thebibliography}{10}

\bibitem{BahCD11}
H.~Bahouri, J.-Y. Chemin and R.~Danchin.
\newblock {\em Fourier analysis and nonlinear partial differential equations},
  volume 343 of {\em Grundlehren der Mathematischen Wissenschaften [Fundamental
  Principles of Mathematical Sciences]}.
\newblock Springer, Heidelberg, 2011.

\bibitem{Brando-Revista}
L.~Brandolese.
\newblock Asymptotic behavior of the energy and pointwise estimates for
  solutions to the Navier-Stokes equations.
\newblock {\em Rev. Mat. Iberoamericana}, 20:223--256, 2004.

\bibitem{Bra-Per-Zin}
L.~Brandolese, C.~F. Perusato and P.~R. Zingano.
\newblock On the topological size of the class of Leray solutions with
  algebraic decay.
\newblock {\em Bull. Lond. Math. Soc.}, 56(1):59--71, 2024.

\bibitem{BCFZ}
P.~Braz~e Silva, F.~W. Cruz, L.~B.~S. Freitas and P.~R. Zingano.
\newblock On the $L^2$ decay of weak solutions for the 3D asymmetric fluids
  equations.
\newblock {\em J. Differential Equations}, 267(6), 2019.

\bibitem{Chen-Price}
Z.-M. Chen and W.~G. Price.
\newblock Decay estimates of linearized micropolar fluid flows in
  $\mathbb{R}^3$ space with applications to $L_3$-strong solutions.
\newblock {\em Internat. J. Engrg. Sci.}, 44(13-14):859--873, 2006.

\bibitem{diperna_ordinary_1989}
R.~J. DiPerna and P.-L. Lions.
\newblock Ordinary differential equations, transport theory and Sobolev spaces.
\newblock {\em Inventiones Mathematicae}, 98(3):511--547, 1989.

\bibitem{Dong-Zhang}
B.~Dong and Z.~Zhang.
\newblock Global regularity of the 2D micropolar fluid flows with zero angular
  viscosity.
\newblock {\em J. Differ. Equ.}, 249:200--213, 2010.

\bibitem{Ericksen1990}
J.~L. Ericksen.
\newblock Liquid crystals with variable degree of orientation.
\newblock {\em Arch. Rational Mech. Anal.}, 113(2):97--120, 1990.

\bibitem{Eringen}
A.~C. Eringen.
\newblock Theory of micropolar fluids.
\newblock {\em J. Math. Mech.}, 16:1--18, 1966.

\bibitem{Fujigaki-Miyakawa-SIAM}
Y.~Fujigaki and T.~Miyakawa.
\newblock Asymptotic profiles of nonstationary incompressible Navier-Stokes
  flows in the whole space.
\newblock {\em SIAM J. Math. Anal.}, 33(3):523--544, 2001.

\bibitem{Galdi-Rionero}
G.~P. Galdi and S.~Rionero.
\newblock A note on the existence and uniqueness of solutions of micropolar
  fluid equations.
\newblock {\em Internat. J. Engrg. Sci.}, 14:105--108, 1977.

\bibitem{GW05}
T.~Gallay and C.~E. Wayne.
\newblock Global stability of vortex solutions of the two-dimensional
  Navier-Stokes equation.
\newblock {\em Comm. Math. Phys.}, 255(1):97--129, 2005.

\bibitem{GRT}
F.~Gay-Balmaz, T.~S. Ratiu and C.~Tronci.
\newblock Equivalent theories of liquid crystal dynamics.
\newblock {\em Arch. Ration. Mech. Anal.}, 210(3):773--811, 2013.

\bibitem{Guo-Jia-Dong}
Y.~Guo, Y.~Jia and B.-Q. Dong.
\newblock Time Decay Rates of the Micropolar Equations with Zero Angular
  Viscosity.
\newblock {\em Bulletin of the Malaysian Mathematical Sciences Society}, 2021.

\bibitem{Cesar-RG-Zingano-CP2023}
R.~Guterres, W.~Melo, C.~Niche, C.~Perusato and P.~Zingano.
\newblock Strong Alignment of Micro-rotation and Vorticity in 3D Micropolar
  Flows.
\newblock {\em Nonlinearity}, 38(015006), 2025.

\bibitem{GuterresNichePerusatoZingano2023}
R.~H. Guterres, C.~J. Niche, C.~F. Perusato and P.~R. Zingano.
\newblock Upper and lower $\dot H^m$ estimates for solutions to parabolic
  equations.
\newblock {\em J. Differential Equations}, 356:407--431, 2023.

\bibitem{GuterresNunesPerusato2019}
R.~H. Guterres, J.~R. Nunes and C.~F. Perusato.
\newblock On the large time decay of global solutions for the micropolar
  dynamics in $L^2(\mathbb{R}^n)$.
\newblock {\em Nonlinear Anal. Real World Appl.}, 45:789--798, 2019.

\bibitem{Kato1984}
T.~Kato.
\newblock Strong $L\sp{p}$-solutions of the Navier-Stokes equation in ${\bf
  R}\sp{m}$, with applications to weak solutions.
\newblock {\em Math. Z.}, 187(4):471--480, 1984.

\bibitem{ZinganoJMFM}
H.-O. Kreiss, T.~Hagstrom, J.~Lorenz and P.~Zingano.
\newblock Decay in time of incompressible flows.
\newblock {\em J. Math. Fluid Mech.}, 5(3):231--244, 2003.

\bibitem{Ladyzhenskaya}
O.~A. Ladyzhenskaya.
\newblock {\em The Mathematical Theory of Viscous Incompressible Fluids}.
\newblock Gorden Brech, New York, 1969.

\bibitem{Leray1934}
J.~Leray.
\newblock Sur le mouvement d'un liquide visqueux emplissant l'espace.
\newblock {\em Acta Math.}, 63(1):193--248, 1934.

\bibitem{lions_quelques_1969}
J.-L. Lions.
\newblock {\em Quelques m{\'e}thodes de r{\'e}solution des probl{\`e}mes aux
  limites non lin{\'e}aires}.
\newblock Dunod, 1969.

\bibitem{lions_mathematical_1996}
P.-L. Lions.
\newblock {\em Mathematical Topics in Fluid Mechanics. {{Vol}}. 1}, volume~3 of
  {\em Oxford {{Lecture Series}} in {{Mathematics}} and Its {{Applications}}}.
\newblock The Clarendon Press, Oxford University Press, New York, 1996.

\bibitem{Lukaszewicz}
G.~Lukaszewicz.
\newblock {\em Micropolar fluids. Theory and applications}.
\newblock Boston: Birkh\"auser, 1999.

\bibitem{Masuda1984}
K.~Masuda.
\newblock Weak solutions of Navier-Stokes equations.
\newblock {\em Tohoku Math. J. (2)}, 36(4):623--646, 1984.

\bibitem{Niu-Shang}
D.~Niu and H.~Shang.
\newblock Lower and upper bounds of decay to the d-dimensional
  magneto-micropolar equations.
\newblock {\em J. Math. Phys.}, 65:121501, 2024.

\bibitem{putzer_avoiding_1966}
E.~J. Putzer.
\newblock Avoiding the {{Jordan Canonical Form}} in the {{Discussion}} of
  {{Linear Systems}} with {{Constant Coefficients}}.
\newblock {\em The American Mathematical Monthly}, 73(1):2--7, 1966.

\bibitem{Rojas-Medar}
M.~A. Rojas-Medar.
\newblock Magneto-micropolar fluid motion: Existence and uniqueness of strong
  solution.
\newblock {\em Math. Nachr.}, 188:301–319, 1997.

\bibitem{Sch85}
M.~E. Schonbek.
\newblock $L^2$ decay for Weak Solutions of the Navier--Stokes Equations.
\newblock {\em Arch. Rat. Mech. Anal.}, 88:209--222, 1985.

\bibitem{Shliomis2021}
M.~I. Shliomis.
\newblock How a rotating magnetic field causes ferrofluid to rotate.
\newblock {\em Phys. Rev. Fluids}, 6(043701), 2021.

\bibitem{taylor_partial_1997}
M.~E. Taylor.
\newblock {\em Partial differential equations. {{III}}}.
\newblock Springer-Verlag, New York, 1997.

\bibitem{Temam}
R.~Temam.
\newblock {\em Navier–Stokes Equations, Theory and Numerical Analysis}.
\newblock North-Holland, Amsterdam, New York, 1977.

\bibitem{Wie87}
M.~Wiegner.
\newblock Decay results for weak solutions of the Navier-Stokes equations on
  ${\bf R}^n$.
\newblock {\em J. London Math. Soc. (2)}, 35(2):303--313, 1987.

\bibitem{Zuazua1}
E.~Zuazua.
\newblock Asymptotic behavior of scalar convection-diffusion equations.
\newblock {\em arXiv.org:2003.11834}, 2020.

\end{thebibliography}
\end{document}